\newtheorem{thm}{Theorem}[section]
\newtheorem{cor}{Corollary}[section]
\newtheorem{lem}{Lemma}[section]
\newtheorem{prop}{Proposition}[section]
\newtheorem{rem}{Remark} [section]
\newtheorem{defn}{Definition}[section]
\newcommand{\ZZ}{{\mathbb Z}}
\newcommand{\RR}{{\mathbb R}}
\newcommand{\CC}{{\mathbb C}}
\newcommand{\GG}{{\mathbb G}}
\newcommand{\SQ}{{\mathcal S}_{q, w}(\GG)
}
\definecolor{PennBlue}{RGB}{001,031,091}
\definecolor{PennRed}{RGB}{153,0,0}
\definecolor{NewBlue}{RGB}{001,031,110}
\definecolor{NewRed}{RGB}{200,0,0}
\begin{document}

\title{\fontsize{21}{21}%
\selectfont {Sparsity and Spatial Localization Measures for \\  Spatially Distributed Systems}\thanks{{\footnotesize The work of N. Motee was supported by the National Science Foundation under award NSF-ECCS-1202517, by the Air Force Office of Scientific Research under award AFOSR-YIP FA9550-13-1-0158, and by the Office of Naval Research under award ONR N00014-13-1-0636. The work of Q. Sun was supported by the National Science Foundation under award NSF-DMS-1109063.}}}
\author{\fontsize{13}{13} {Nader Motee}\thanks{{\footnotesize The author is with the Department of Mechanical Engineering and Mechanics, Packard Laboratory, Lehigh University, Bethlehem, PA. Email address:  {\tt\small motee@lehigh.edu}.}}  \hspace{0.5in}{Qiyu Sun}\thanks{{\footnotesize The author is with the Department of Mathematics, University of Central Florida, Orlando
FL. Email address: {\tt\small qiyu.sun@ucf.edu.}}}  \\
~}
\date{\fontsize{11}{11}\selectfont This version: October 2014 \\
First version: Febrauray 2014} 
\maketitle

\begin{abstract}
\fontsize{10}{10}\selectfont \baselineskip0.5cm
We consider the class of spatially decaying systems, where the underlying dynamics are spatially decaying and the sensing and controls are spatially distributed. This class of systems arise in various applications where there is a notion of spatial distance with respect to which couplings between the subsystems can be quantified using a class of coupling weight functions. We exploit spatial decay property of the underlying dynamics of this class of systems to introduce a class of sparsity and spatial localization measures. We develop a new methodology based on concepts of $q$-Banach algebras of spatially decaying matrices that enables us to establish a relationship between spatial decay properties of spatially decaying systems and their sparsity and spatial localization features. Moreover, it is shown  that the inverse-closedness property of matrix algebras plays a central role in exploiting various structural properties of spatially decaying systems. We characterize conditions for exponentially stability of spatially decaying system over $q$-Banach algebras and prove that the unique solutions of the Lyapunov and Riccati equations over a proper $q$-Banach algebra also belong to the same $q$-Banach algebra. It is shown that the quadratically optimal state feedback controllers for spatially decaying systems are sparse and spatially localized in the sense that they have near-optimal sparse information structures.\newline
\newline
\textit{Keywords}: Distributed control, infinite-dimensional systems, optimal control, sparsity, spatial localization, spatially decaying systems.\newline
\textit{IEEE Transaction on Automatic Control. Under Review.}  \newline
\newline
\end{abstract}
\fontsize{12}{12}\selectfont


\thispagestyle{empty} \newpage \fontsize{11}{11}\selectfont\baselineskip %
0.60cm

\onehalfspacing

\allowdisplaybreaks

\section{Introduction}

In a number of important applications, centralized implementation of automatic control is practically infeasible due to lack of access to centralized information. This necessitates design of dynamical networks with sparse and spatially localized information structures, which is currently one of the outstanding open problems in control systems. A precise and comprehensive analysis of sparsity and spatial localization for large-scale dynamical network does not appear to exist in the context of distributed control systems. In this paper we investigate this problem for a broad class of spatially distributed systems, so-called {\it spatially decaying systems}, which are linear systems with off-diagonally decaying state-space matrices. Examples of such systems include linearized and/or spatially discretized models of spatially distributed power networks with sparse interconnection topologies, multi-agent systems with nearest-neighbor coupling structures \cite{Smale-flock-2007}, arrays of micro-mirrors \cite{Neilson01},  micro-cantilevers \cite{Napoli99}, and sensor networks. These systems belong to the class of spatio-temporal systems, where all relevant signals are indexed by a spatial coordinate in addition to time \cite{bamiehPD02}.

For the class of spatially decaying systems, we show that automatic feedback control mechanisms can be spatially localized using far less sensor measurements and actuators than traditional control design techniques. In particular, we exploit spatial structure of the underlying dynamics of networks and reveal that the quadratically-optimal state feedback controllers for a broad range of real-world dynamical networks are inherently sparse and spatially localized, in the sense that, they have near-optimal sparse information structures.

An important class of spatio-temporal systems includes the class of spatially invariant
systems. This class of systems can be defined over continuous or  (infinite or finite dimensional) discrete spatial domains. Their state space matrices consist of translation-invariant operators such as partial differential operators with constant coefficients, spatial shift operators, spatial convolution operators, general pseudodifferential operator and integral operators \cite{trevesbook,grochenigbook}, or a linear combination of such operators  \cite{HormanderVol1}.  A subclass of spatially invariant systems  is considered in \cite{bamiehPD02, curtain-sasane-2011} where the symmetric spatial invariance property of this class of systems are exploited and techniques from spatial Fourier transforms are applied to study optimal control of linear spatially invariant systems. It is shown that the original optimal control problems in spatial domain can be transformed into a family of finite-dimensional parameterized problems in Fourier domain and treated using standard tools for finite-dimensional linear systems in control theory. Moreover, the seminal work \cite{bamiehPD02} shows that the corresponding quadratically-optimal controllers are spatially invariant and have an inherently semi-decentralized information structures.

In this paper, we consider spatially distributed systems over infinite-dimensional {\it discrete} spatial domains, where the dynamics of individual subsystems are {\it heterogeneous}, and the spatial structure does not necessarily enjoy any particular spatial symmetries. Therefore, standard tools such as Fourier analysis cannot be used to analyze this class of systems. The existing traditional methods to study this class of problems are usually based on notions of Banach algebras. One of earliest works in this area is \cite{bunce85} where the algebraic properties of Riccati equations is studied over $C^{*}$--subalgebras of the space of bounded linear operators on some Hilbert space. More recent effort is reported \cite{curtain-2011} where the algebraic properties of Riccati equations is investigated over noncommutative involutive Banach algebras, which can be considered as a generalization of earlier results of \cite{bunce85}. Our paper is close in spirit to earlier works  \cite{mjieee09} and \cite{mjieee08} where a general approach is proposed based on Banach algebras of spatially decaying matrices to analyze the spatial structure of infinite and finite horizon optimal controllers for spatially distributed systems  with arbitrary spatial structures. In \cite{mjieee09} and \cite{mjieee08}, it is shown that quadratically optimal controllers inherit spatial decay properties of the underlying dynamics of the systems. A basic fundamental property of a Banach algebra is that it is a Banach space and locally convex. These nice properties allow us to apply existing methods in the literature to study the class of spatially distributed systems over Banach algebras; for example see \cite{bunce85,Bensoussan-Mitter93,bamiehPD02, mjieee09, mjieee08, motee-sun-CDC-2013, bottcher2011algebraic, curtain-2011, curtain-correction-2013}.

The main contribution of this paper is the development of a unified methodology to determine the degrees of sparsity and spatial localization for a broad class of spatially distributed systems. We categorize the largest classes of spatially distributed systems for which their corresponding quadratically optimal controllers inherent spatial decay property from the dynamics of their underlying systems. We introduce new classes of spatially decaying systems that are defined over $q$-Banach algebras endowed with matrix $q$-norms, where $q$ is an exponent strictly greater than $0$ and less than or equal to $1$. The class of spatially distributed systems considered in \cite{mjieee08, mjieee09} and \cite{curtain-2011}  are special examples of our new class of systems which correspond to exponent $q=1$. When the exponent $q$ is strictly less than $1$, a $q$-Banach algebra becomes locally nonconvex and is not a Banach space. However, they exhibit an interesting property: if $q_2$ is less than $q_1$, then the space of linear systems over $q_1$-Banach algebra is a subset of the space of linear systems defined over $q_2$-Banach algebra. This property implies that when exponent $q$ tends towards $0$, the space of spatially decaying systems over the $q$-Banach algebra  starts to expand and cover larger classes of spatially distributed systems.

The analysis and synthesis of optimal controllers for linear systems involves using inverse operation of matrices or transfer matrices.  In general, linear system analysis over the space of sparse matrices is hopeless, in the sense that sparsity is not preserved under inverse operation. For example, a Toeplitz band matrix belongs to the space of sparse matrices while its inverse may not live in that space. Therefore, the space of sparse matrices is not inverse-closed.  We establish a connection between a notion of sparsity and the spatial decay property of the class of spatially decaying systems. The bridge connecting these two fundamental notions is built upon the key idea of asymptotically approximating the space of sparse matrices by inverse-closed $q$-Banach algebras for sufficiently small values of $q$. We begin in section \ref{sec-II} by categorizing the class of admissible coupling weight functions in order to model coupling structures in spatially distributed systems. In section \ref{sec-III}, we introduce the class of spatially decaying systems over the Gr\"ochenig-Schur class of spatially decaying matrices, which are examples of $q$-Banach algebras. The problem formulation is discussed in section \ref{sec:LQR}, where we define the LQR problem for the class of spatially decaying systems and assert that there is an inherent relationship between sparsity and spatial decay property of the LQR feedback controller. It is argued in section \ref{sec-III} that it is necessary to study spatially distributed systems over $q$-Banach algebras for the range of exponents $0$ to $1$ in order to exploit their sparsity features. Section  \ref{sec-IV} characterizes various algebraic properties of $q$-Banach algebras and shows under some conditions $q$-Banach algebras are inverse-closed. Furthermore, we show that proper $q$-Banach algebras enjoy spectral-invariance property with respect to the space of bounded linear operators on $\ell^2(\GG)$. In section \ref{sec:exp-stability}, we show that characterization of exponential stability for linear systems over $q$-Banach algebras slightly differs  from the standard characterization and  explicitly quantify the decay rate of the $q$-norm of the $C_0$-semigroup. The main control-theoretic results of this paper are in sections \ref{sec-V} and \ref{sec:riccati} where we prove that the unique solution of the Lyapunov equation for linear autonomous systems and Riccati equation resulting from the LQR problems over a proper $q$-Banach algebra also belong to the same $q$-Banach algebra. The significance of these results are discussed in section \ref{sec-VI} and it is shown that  the underlying information structure of the LQR controller for a spatially decaying system is inherently spatially localized and each local controller needs to receive state information only from some neighboring subsystems. Then, we characterize a fundamental limit that explains to what degree a stabilizing controller with spatially decaying structure can be sparsified and spatially localized. Moreover, we argue that a fundamental tradeoff emerges between a desired degree of sparsification and localization and the global performance loss. A probabilistic method and a computational algorithm is proposed in section \ref{sec-near-ideal-meas}  to analyze and compute near-optimal degrees of sparsity and spatial localization for the Gr\"ochenig-Schur class of spatially decaying matrices. We end in section \ref{sec-dis-concl} with a discussion of related areas in which our methodology can be applied, as well as a discussion of some open research problems.

\noindent {\bf Mathematical Notation.}
Throughout the paper, the underlying discrete spatial domain of a spatially distributed system is denoted by $\GG$ which is a subset of $\ZZ^{d}$ for $d \geq 1$.
For a given discrete spatial domain $\GG$, the $\ell^{0}$--measure of  a  vector $x=[x_i]_{i\in \GG}$  is defined as
\begin{equation}
\|x\|_{\ell^{0}(\GG)}~:=~\mathbf{card}\big\{x_{i}\neq 0~\big|~i\in \GG \big\},\label{ell-0-norm}
\end{equation}
where $\mathbf{card}$ is the number of elements in a set.  The value of the $\ell^{0}$--measure represents the total number of nonzero entries in a vector. The $\ell^q$--measure of $x$ is defined by
\[\|x\|_{\ell^{q}(\GG)}^{q}=\sum_{i\in \GG} |x_i|^q\]
for all $0 < q < \infty$, and \[\|x\|_{\ell^{\infty}(\GG)}=\sup_{i \in \GG} |x_{i}|,\]
for $q=\infty$. Whenever it is not ambiguous, we use the simplified notation $\|x\|_q$ for the $\ell^q$--measure of vector $x$.

\begin{rem}
The proof of all theorems, propositions, and lemmas in Sections \ref{sec-II} through \ref{sec-IV} are given in Appendix \ref{appendix}.
\end{rem}

\section{Admissible Coupling Weight Functions}\label{sec-II}
The structure of couplings between subsystems in a   spatially distributed linear systems can be modeled using coupling weight functions. We consider the class of infinite-dimensional linear dynamical networks whose coupling structures are spatially decaying. For this class of systems, the coupling strength between subsystems decays by distance in the spatial domain $\GG$. We say that a function $\rho: \GG \times \GG \rightarrow \RR$  is a {\it quasi-distance function} on $\GG$ if it satisfies the following properties:
\begin{description}
\item[(i)] $\rho(i, j)\ge 0$~ for all ~$i,j\in \GG$;
\item[(ii)] $\rho(i, j)=0$~ if and only if ~$i=j$; and
\item[(iii)] $\rho(i, j)=\rho(j, i)$~ for all ~$i,j\in \GG$.
\end{description}

A quasi-distance function is different from a distance function in that the triangle inequality is not required to hold. In the case that $\GG$ corresponds to an unweighted undirected graph of a sparsely connected spatially distributed system, one may use the shortest distance on a graph to define the quasi-distance $\rho(i,j)$ from vertex $i$ to vertex $j$. A  {\it coupling weight function} is a positive function $w$ that is defined on $\GG \times \GG$ and satisfies the following properties:
\begin{description}
\item[(i)] $w(i, j)\ge 1$ for all $i, j\in \GG$;
\item[(ii)] $w(i,j)=w(j,i)$ for all $i,j\in \GG$; and
\item[(iii)] $\sup_{i\in \GG} w(i, i)<\infty$.
\end{description}

In our framework, we are interested in coupling weight functions that are {\it submultiplicative}, i.e.,
\begin{equation}\label{weight.submultiplication}
 w(i, j) \le  w(i,k) \hspace{0.05cm} w(k, j)
\end{equation}
for all $i,j,k \in \GG$. Later on in our analysis, this property will help us to define matrix norms that enjoy submultiplicative property. In order to develop a relevant matrix space to study the sparsity and spatial localization in spatially distributed systems, we need to impose some additional technical conditions on coupling weight functions that will guarantee certain growth rates for the coupling weight functions.

\begin{defn}\label{def-weight}
Suppose that $\rho:\GG \times \GG\longmapsto \RR$
is a quasi-distance function. A coupling weight function $w=[w(i,j)]_{i,j\in \GG}$ is called {\it admissible} whenever there exist a companion weight function  $u=[u(i,j)]_{i,j\in \GG}$, an exponent $\theta \in (0,1)$, and a positive constant $D$ such that
\begin{equation}\label{uw.eq}
w(i,j)\le w(i,k)\hspace{0.05cm} u(k,j) + u(i,k)\hspace{0.05cm} w(k,j)  \quad {\rm  for \ all} \ i,j,k\in \GG, \end{equation}
and the following inequality
\begin{eqnarray}\label{theta.condition}
 & & \hspace{-1.35cm} \sup_{i\in \GG} \bigg\{ \inf_{\tau\ge 0}
\bigg[\sum_{j\in \GG \atop \rho(i,j)< \tau} |u(i,j)|^{\frac{2q}{2-q}}\bigg]^{1-\frac{q}{2}} ~+~ t~ \sup_{j\in \GG \atop\rho(i,j)\ge \tau} \Big( \frac{u(i,j)}{w(i,j)}\Big)^{q}\bigg\} ~\le~ D t^{1-\theta}
\end{eqnarray}
holds for all $t\ge 1$ when $0 < q \leq 1$, and
\begin{eqnarray}\label{theta.condition2*}
 & & \hspace{-1.3cm} \sup_{i\in \GG} \bigg\{\inf_{\tau\ge 0}
\bigg[\hspace{0cm}\sum_{j\in \GG \atop \rho(i,j)< \tau} |u(i,j)|^2\bigg]^{\frac{1}{2}} ~+~  t ~\bigg[\sum_{j\in \GG \atop\rho(i,j)\ge \tau} \left( \frac{u(i,j)}{w(i,j)}\right)^{\frac{q}{q-1}}\bigg]^{\frac{q-1}{q}}\bigg\} ~\le~ D t^{1-\theta}
\end{eqnarray}
holds for all $t\ge 1$ when $1<q\le \infty$.
\end{defn}

We refer the reader to \cite[P. 3102]{suntams07} for similar definitions on the admissibility of a weight function for $q\ge 1$.  The following lemma characterizes two classes of most popular coupling weight functions that appear in modeling of various real-world dynamical networks. We refer to Appendix \ref{weight.appendix} for the proof,  c.f. Example 2.2 in \cite{suntams07}.


\begin{figure}[t]
\begin{center}
\includegraphics[trim = 20mm 0mm 0mm 10mm, clip, width=0.5\textwidth]{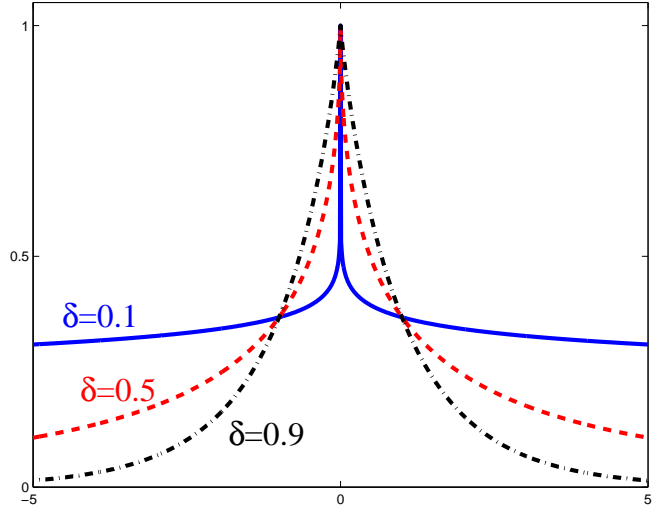}
\caption{{\small The decay rates of the inverse of the sub-exponential coupling weight function \eqref{sub-exp-w} with $d=1$ and $\sigma=1$ are depicted for three different values of parameter $\delta$. }}
\label{fig-subexp-w}
\end{center}
\vspace{-.8cm}
\end{figure}

\begin{lem}\label{weightexample.lem} Suppose that the quasi-distance function on $\ZZ^d$ is defined by $\rho(i,j)=\|i-j\|_{\infty}$. For $0<q\le 1$, the class of sub-exponential coupling weight functions
\begin{equation}
e_{\sigma, \delta}:=\big[e_{\sigma, \delta}(i,j)\big]_{i,j\in \ZZ^d}=\bigg[e^{\left(\frac{\|i-j\|_\infty}{\sigma}\right)^\delta}\bigg]_{i,j\in \ZZ^d} \label{sub-exp-w}
\end{equation}
with parameters $\sigma>0$ and $\delta\in (0, 1)$, are submultiplicative  and admissible with constants
\[ D_{e}= 2^{d+1-\frac{dq}{2}}, \quad \theta_{e}=\frac{q\delta (2-2^\delta)}{q\delta+d(1-\frac{q}{2})\sigma^{\delta}},\]
and the class of polynomial coupling weight functions
\begin{equation}
\pi_{\alpha, \sigma}~:=~\big[\pi_{\alpha, \sigma} (i,j)\big]_{i,j\in \mathbb{Z}^d}~=~\left[\left(\frac{1+\|i-j\|_\infty}{\sigma}\right)^{\alpha}\right]_{i,j\in \mathbb{Z}^d}\label{poly-w}
\end{equation}
with parameters $\alpha,\sigma>0$, are submultiplicative and admissible with constants
\[D_{\pi}=2^{q\alpha+1} \max \Big\{ 1, (2\sigma)^{d(1-\frac{q}{2})}\Big\}, \quad \theta_{\pi}= \frac{q\alpha}{q\alpha+ d(1-\frac{q}{2})}.\]
\end{lem}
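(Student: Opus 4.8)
The plan is to unpack Definition~\ref{def-weight} into three tasks and settle them in order: submultiplicativity~\eqref{weight.submultiplication}, the companion-weight inequality~\eqref{uw.eq}, and the quantitative growth bound~\eqref{theta.condition} (we are in the range $0<q\le 1$). Throughout I write $a=\|i-k\|_\infty$, $b=\|k-j\|_\infty$, $c=\|i-j\|_\infty$, so the triangle inequality for $\|\cdot\|_\infty$ gives $c\le a+b$. Submultiplicativity is the warm-up. For the sub-exponential family I would combine $c\le a+b$ with the subadditivity of $x\mapsto x^\delta$ on $[0,\infty)$ (valid since $\delta\in(0,1)$) to get $(c/\sigma)^\delta\le(a/\sigma)^\delta+(b/\sigma)^\delta$, hence $e_{\sigma,\delta}(i,j)\le e_{\sigma,\delta}(i,k)\,e_{\sigma,\delta}(k,j)$. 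For the polynomial family I would use $(1+a)(1+b)\ge 1+a+b\ge 1+c$ together with $\sigma\le 1$ (which the normalization $w\ge 1$ at the diagonal forces anyway, since $w(i,i)=\sigma^{-\alpha}\ge1$), so that $(1+c)\sigma\le(1+a)(1+b)$ and submultiplicativity follows after taking $\alpha$-th powers.

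The creative step is exhibiting the companion weight $u$ in~\eqref{uw.eq}. For the polynomial family I would try the \emph{constant} companion $u\equiv 2^\alpha$: since $\rho(i,k)+\rho(k,j)\ge\rho(i,j)$, at least one of $a,b$ is $\ge c/2$, and in that case $(1+c)/(1+c/2)\le2$ yields $\pi_{\alpha,\sigma}(i,j)\le 2^\alpha\pi_{\alpha,\sigma}(i,k)\le 2^\alpha\big(\pi_{\alpha,\sigma}(i,k)+\pi_{\alpha,\sigma}(k,j)\big)$, which is exactly~\eqref{uw.eq}. For the sub-exponential family I would take $u=e_{\sigma,\delta}^{\,2^\delta-1}$, i.e.\ $u(i,j)=e^{(2^\delta-1)(\|i-j\|_\infty/\sigma)^\delta}$. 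The key is the sharpened subadditivity
\begin{equation*}
(a+b)^\delta\le a^\delta+(2^\delta-1)\,b^\delta\qquad(a\ge b\ge 0),
\end{equation*}
whose two sides agree at $b=0$ and at $b=a$ and which one verifies by normalizing $s=b/a\in[0,1]$ and checking the difference is nonnegative. Placing the penalty $(2^\delta-1)$ on the smaller of $a,b$ gives $e_{\sigma,\delta}(i,j)\le e_{\sigma,\delta}(i,k)\,u(k,j)+u(i,k)\,e_{\sigma,\delta}(k,j)$. In both cases $u/w$ decays in $\rho$ — like $(1+\rho)^{-\alpha}$, respectively $e^{-(2-2^\delta)(\rho/\sigma)^\delta}$ — which is precisely what makes the tail term in~\eqref{theta.condition} small.

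Finally I would verify~\eqref{theta.condition}. Setting $p=\tfrac{2q}{2-q}$, so that $1-\tfrac q2=\tfrac qp$, the bracket reads $T_1(\tau)+t\,T_2(\tau)$ with $T_1(\tau)=\big(\sum_{\rho(i,j)<\tau}|u(i,j)|^{p}\big)^{q/p}$ and $T_2(\tau)=\sup_{\rho(i,j)\ge\tau}(u(i,j)/w(i,j))^{q}$, the latter attained at $\rho=\tau$ by monotonicity of $u/w$. Using the lattice count $\#\{j:\|i-j\|_\infty<\tau\}\le(2\tau)^d$, the polynomial case gives $T_1(\tau)\le 2^{q\alpha}(2\tau)^{d(1-q/2)}$ and $t\,T_2(\tau)\le t\,\big(2\sigma/(1+\tau)\big)^{q\alpha}$; choosing the cut-off $1+\tau=\sigma\,t^{1/(q\alpha+d(1-q/2))}$ makes both contributions equal to a constant multiple of $t^{1-\theta_\pi}$ with $\theta_\pi=\frac{q\alpha}{q\alpha+d(1-q/2)}$, and bounding $(2\sigma)^{d(1-q/2)}+1\le 2\max\{1,(2\sigma)^{d(1-q/2)}\}$ reproduces $D_\pi$ exactly. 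For the sub-exponential family the scheme is identical, except that the sum defining $T_1$ is exponentially weighted; I would bound it by its largest term times the lattice count and then choose $\tau$ with $(\tau/\sigma)^\delta$ proportional to $\log t$, which after balancing delivers $\theta_e$ and $D_e$.

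The main obstacle is the exact-constant bookkeeping of this last step. Obtaining precisely $D_e=2^{d+1-dq/2}$, $D_\pi$, and the stated $\theta$'s requires (i) a clean estimate of the lattice sum $\sum_{\rho<\tau}|u|^{p}$ — for the sub-exponential weight the crude ``count $\times$ maximum'' bound introduces a polylogarithmic-in-$t$ prefactor that must be absorbed into $D_e$ rather than allowed to degrade the power of $t$ — and (ii) a valid choice of the cut-off radius $\tau=\tau(t)$ holding uniformly over all $t\ge1$, including the boundary case $t=1$ where the inner ball is empty, and including the slight slack in the integer-point count $\#\{j:\|i-j\|_\infty<\tau\}$. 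I expect the polynomial case to be sharp and essentially mechanical, because its companion weight is constant; the sub-exponential case is the delicate one, and it is there that comparison with Example~2.2 of~\cite{suntams07} is most useful.
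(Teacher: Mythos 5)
Your proposal is correct and follows essentially the same route as the paper's proof in Appendix A.1: the same companion weights (the constant $2^\alpha$ for the polynomial family, and $u=e_{\sigma,\delta}^{\,2^\delta-1}$, which is the paper's $e_{\sigma',\delta}$ with $\sigma'=\sigma/\sqrt[\delta]{2^\delta-1}$, for the sub-exponential family), the same sharpened subadditivity inequality $(1+t)^\delta\le 1+(2^\delta-1)t^\delta$ on $[0,1]$, and the same count-times-maximum estimate of the lattice sum followed by balancing the cut-off $\tau$. Even the bookkeeping worry you flag is resolved exactly as the paper does it — the polynomial factor $\tau^{d(1-q/2)}$ is absorbed into the exponential via $\ln\tau\le\tau^\delta/\delta$ before balancing, which is precisely why $\theta_e$ carries the $d(1-\frac{q}{2})\sigma^\delta$ term in its denominator.
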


In real-world applications, the decay rate of the coupling coefficients in state space matrices determines the type of a suitable coupling weight function for that specific dynamical network (see Figures \ref{sub-exp-w} and \ref{poly-w} for some examples).

\begin{rem}
A weak version of the submultiplicative property \eqref{weight.submultiplication} can deduced from Definition \ref{def-weight} as follows:
\[
 w(i, j) ~\le~ C_0  w(i,k) \hspace{0.05cm} w(k, j)
 \quad {\rm  for \ all} ~~\ i,j,k\in \GG.
\]
with constant $C_{0}\le 2 \sqrt[q]{D}$. We refer to Appendix \ref{append-remark} for more details. We should also emphasize that the class of weight functions that are considered in this section are more general than the class of weight functions introduced earlier in \cite{gltams06, suntams07, sunca11}.
\end{rem}

\section{The Class of Spatially Decaying Systems}\label{sec-III}
We consider the class of infinite-dimensional linear systems whose dynamics are governed by
\begin{eqnarray}
\dot{x}& = &Ax+Bu, \label{linear-sys-1}\\
y & = & C x + D u, \label{linear-sys-2}
\end{eqnarray}
where it is assumed that all state-space matrices are constant by time and all relevant signals of the system are indexed by a spatial coordinate in addition to time.  The state, input and output variables are represented by infinite-dimensional vectors $x=[x_{i}]_{i \in \GG}, u=[u_{i}]_{i \in \GG}, y=[y_{i}]_{i \in \GG}$, respectively. The linear system \eqref{linear-sys-1}-\eqref{linear-sys-2} is defined on the state space $\ell^2(\GG)$. We are interested in a class of infinite-dimensional linear systems \eqref{linear-sys-1}-\eqref{linear-sys-2} with the common property that there is a notion of spatial distance with respect to which couplings between the subsystems can be quantified using a class of coupling weight functions. This class of systems are so called spatially decaying systems and are defined over the following general class of spatially decaying matrices.

\begin{figure}[t]
\begin{center}
\includegraphics[trim = 20mm 0mm 0mm 10mm, clip, width=0.5\textwidth]{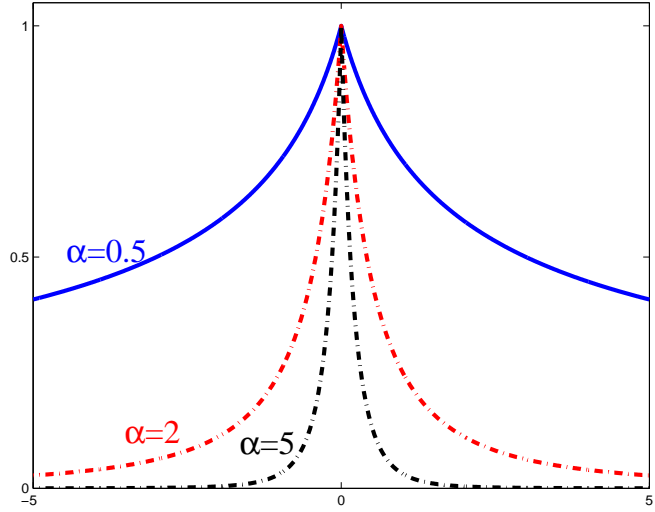}
\caption{{\small The decay rates of the inverse of the polynomial coupling weight function \eqref{poly-w} with $d=1$ and $\sigma=1$ are depicted for three different values of parameter $\alpha$. }}
\label{fig-poly-w}
\end{center}
\vspace{-.8cm}
\end{figure}

\begin{defn}
For a given admissible coupling weight function $w$ on $\GG \times \GG$, the  Gr\"ochenig-Schur class of infinite-dimensional matrices  on $\GG$
is denoted by ${\mathcal S}_{q, w}(\GG)$  and defined as
\begin{equation}\label{sqw.def}
{\mathcal S}_{q, w}(\GG) = \Big\{ A=[a_{ij}]_{i,j\in \GG} ~\big|~
\  \|A\|_{{\mathcal S}_{q, w}(\GG)}<\infty \Big\}
\end{equation}
for $0<q\le \infty$, where the $\mathcal{S}_{q, w}$--measure is defined by
\begin{eqnarray}
& & \hspace{-1.3cm} \|A\|_{{\mathcal S}_{q, w}(\GG)}:=
\max\Big\{\sup_{i\in \GG} \Big(\sum_{j\in \GG} |a_{ij}|^q w(i,j)^{q}\Big)^{1/q}, ~ \sup_{j\in \GG} \Big(\sum_{i\in \GG} |a_{ij}|^q w(i,j)^{q}\Big)^{1/q}\Big\},\label{sqwnorm.def}
\end{eqnarray}
for all $0<q<\infty$, and
\begin{equation}\label{sinfinitywnorm.def}
 \|A\|_{{\mathcal S}_{\infty, w}(\GG)}:=
\sup_{i, j\in \GG} |a_{ij}| w(i,j),
\end{equation}
for $q=\infty$.
\end{defn}
\vspace{0.1cm}

We should remark that this class of spatially decaying matrices was studied  for $q = 1$ in \cite{gltams06, mjieee08, mjieee09}, for $q = \infty$ in \cite{jaffard90}, for $1 \leq q \leq \infty$ in \cite{sunca11,suntams07}, and the results of this paper extends this class of matrices to include range of exponents $0 < q \le 1$.  When $0< q < 1$, the quantity \eqref{sqwnorm.def} is so called a $q$-norm and it is nonconvex as $\ell^{q}$-measures are nonconvex  for this range of exponents. Therefore,  the space of matrices ${\mathcal S}_{q, w}(\GG)$ is not  locally convex and that is not a Banach space. In \cite{motee-sun-CDC-2013}, we verified the subalgebra properties of this class of matrices $0< q \leq 1$.  Whenever it is not ambiguous, for simplicity of our notations we will employ ${\mathcal S}_{q, w}$  instead of ${\mathcal S}_{q, w}(\GG)$.

\begin{defn}
For a fixed  exponent $0 < q \leq \infty$, a linear system \eqref{linear-sys-1}-\eqref{linear-sys-2} is called spatially decaying on $\GG$ with respect to an admissible coupling weight function $w$ if  $A, B, C, D \in \mathcal{S}_{q, w}(\GG)$.
\end{defn}

These are systems with off-diagonally decaying state-space matrices. This definition greatly generalizes the earlier works \cite{mjieee09} and \cite{mjieee08}  that only studied the class of such systems for exponent $q=1$. For every exponent $0 < q < 1$, the corresponding class of spatially decaying systems are defined over a matrix space  $\mathcal{S}_{q, w}(\GG)$ that is not a Banach space, which prevents us from employing existing techniques in the literature that are based on continuous integrals on Banach spaces. Therefore, all existing proof methods to study algebraic properties of linear systems cannot be directly applied for systems that are defined over $\mathcal{S}_{q, w}(\GG)$ for $0 < q < 1$. Our results in this paper significantly generalize existing works in the literature to study structural properties of spatially decaying systems.

\begin{rem}
One of the interesting facts about the class of spatially decaying linear systems is that they are closed under basic system operations such as series, parallel, and feedback interconnections. This can be shown by applying the results of Theorems \ref{schur-cor} and \ref{wiener.thm}.
\end{rem}

\begin{rem}
We should emphasize that the result of this paper in the following sections hold for all exponents $0 < q \leq \infty$. Nevertheless, we will only focus on the class of spatially decaying matrices with exponents $0 < q \leq 1$. Our method of analysis and all results can be modified accordingly to cover the range of exponents $1 \le q \leq \infty$.
\end{rem}

\section{Problem Formulation}\label{sec:LQR}
We formulate the problem of  investigating the sparsity and spatial localization features of an infinite horizon Linear Quadratic Regulator (LQR) problem for spatially decaying systems. The LQR problem is defined as the problem of minimizing the quadratic cost functional
\begin{equation}
J=\int_{0}^{\infty} \big(x(t)^{*}Qx(t)+u(t)^{*}Ru(t)\big)dt \label{LQ-cost}
\end{equation}
subject to the dynamics \eqref{linear-sys-1}-\eqref{linear-sys-2} with initial condition $x(0)=x_{0} \in \ell^{2}(\GG)$. Our basic assumption is that the linear system is spatially decaying and that all state-space matrices $A,B,C,D$ as well as weight matrices $Q, R$ belong to $\mathcal{S}_{q, w}(\GG)$. There is a rich literature that consider this problem on Hilbert spaces (see \cite{curtain-book,Bensoussan-Mitter93} and references in there) and show that under some standard assumptions the unique solution to this problem is achieved by a linear state feedback control law $u = -Kx$ for which
\begin{equation}
K=-R^{-1}B^{*}X,
\end{equation}
where $X$ is the unique solution of the following Riccati equation
\begin{equation}
A^{*}X+XA+Q-XBR^{-1}B^{*}X=0.
\end{equation}

In spatially decaying systems, the internal states of the underlying system as well as control inputs are distributed in the spatial domain. We assume that the dynamics of individual subsystems in the spatially decaying system are heterogeneous and the spatial structure does not necessarily enjoy any particular spatial symmetries. Therefore, standard tools such as Fourier analysis cannot be applied to analyze this class of systems. In \cite{mjieee08}, the authors proposed an operator theoretic approach based on the Banach algebras of spatially decaying matrices $\mathcal{S}_{1, w}(\GG)$ to analyze the spatial structure of infinite horizon optimal controllers for spatially distributed systems  with arbitrary spatial structure and showed that quadratically optimal controllers inherit spatial decay properties of the underlying systems. The results of \cite{mjieee08} and \cite{curtain-2011} state that the LQR feedback control law $K=\big[k_{ij}\big]_{i,j \in \GG}$~ is spatially localized, i.e.,
\begin{equation}
|k_{ij}|~\leq~C_{0}~w(i,j)^{-1} \label{decay-LQR-feedback}
\end{equation}
where $C_{0}=\| K \|_{\mathcal{S}_{1, w}(\GG)}$ is a finite number. This implies that the underlying information structure of the optimal control law is sparse and spatially localized on the spatial domain and each local controller needs  to receive state information only from some neighboring subsystems rather than from the entire network.

{\it The Problem:} The goal of this paper is to determine the degrees of sparsity and spatial localization, i.e.,  the communication requirements for the controller array, for the optimal solution of the LQR problem that is defined over a $\mathcal{S}_{q,w}(\GG)$ for $0 < q \leq 1$.

Our primary focus is on revealing
the fundamental role of underlying structure of spatially decaying system in showing that the corresponding LQR state feedback control law has an inherently sparse and localized architecture in the spatial domain. This property can enable us to discover fundamental tradeoffs between sparsity of the information
structure in controller array and global performance loss in a spatially decaying system by exploiting the spatial structure of the underlying system. In the next section, it is discussed that in order to define a viable measure to quantify the sparsity and spatial localization in spatially decaying systems one needs to study the LQR problem over the  Gr\"ochenig-Schur class of matrices $\mathcal{S}_{q,w}(\GG)$ for $0 < q \leq 1$.

\section{The  Space of Sparse Matrices and Their  Asymptotic Approximations}\label{sec-III}

We establish a connection between sparsity and spatial decay features of spatially decaying systems based on the fact that the endowed $\mathcal{S}_{q,w}$--measures are indeed asymptotic approximations of an ideal sparsity measure as exponent $q$ tends to zero. This implies that the Gr\"ochenig-Schur class of spatially decaying matrices can be viewed as asymptotic relaxations of the space of sparse matrices. The range of exponents $0 < q < 1$ is extremely important for our development as we can asymptotically quantify  the sparsity and spatial localization properties of spatially decaying systems for sufficiently small values of exponent $q$. This property comes at the expense of working in matrix spaces that are not Banach spaces.

For every infinite-dimensional vector $x=[x_i]_{i\in \GG}$ with bounded entries, the $\ell^q$--measure approximates the $\ell^{0}$--measure defined by \eqref{ell-0-norm} asymptotically, i.e.,
\begin{equation}\label{sequence.asym}
\lim_{q\to 0}~\|x\|_{\ell^q(\GG)}^q~=~\|x\|_{\ell^0(\GG)}.\end{equation}

This observation motivates us to consider asymptotic behavior of the $\mathcal{S}_{q, w}$--measure as $q$ tends to zero in order to quantitatively identify near--sparse information structures with semi-decentralized architectures for the optimal solution of the LQR problem for spatially decaying systems. For this purpose, let us define an ideal sparsity measure, so called $\mathcal{S}_{0,1}$--measure, for a matrix $A=[a_{ij}]_{i,j \in \GG}$ by
\begin{equation}
\|A\|_{\mathcal{S}_{0,1}(\GG)}:= \max \Big\{\sup_{i \in \GG} \|a_{i \cdot}\|_{\ell^{0}(\GG)} ,\sup_{j \in \GG} \|a_{\cdot j}\|_{\ell^{0}(\GG)} \Big\}, \end{equation}
where $a_{i\cdot}$ is the $i$'th row and $a_{\cdot j}$ is the $j$'th column of matrix $A$. The value of $\mathcal{S}_{0,1}$--measure reflects the maximum number of nonzero entries in all rows and columns of matrix $A$.

\begin{thm}\label{qtozero.thm}
For a given matrix $A=[a_{ij}]_{i,j \in \GG}$ with finite $\mathcal{S}_{0,1}$--measure and bounded entries, i.e., $\|A\|_{{\mathcal S}_{\infty, w}}<\infty$, we have
\begin{equation}\label{lqmatrixlimit}\lim_{q\to 0} ~\|A\|_{{\mathcal S}_{q, w}(\GG)}^q~= ~\|A\|_{{\mathcal S}_{0, 1}(\GG)}.\end{equation}
\end{thm}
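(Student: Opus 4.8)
The plan is to reduce the claim to the separate convergence of the row-type and column-type quantities that appear in \eqref{sqwnorm.def}. Writing $R_q := \sup_{i\in \GG}\sum_{j\in \GG}|a_{ij}|^q w(i,j)^q$ and $C_q := \sup_{j\in \GG}\sum_{i\in \GG}|a_{ij}|^q w(i,j)^q$, so that $\|A\|_{\mathcal{S}_{q,w}(\GG)}^q=\max\{R_q,C_q\}$, and setting $R_0:=\sup_{i}\|a_{i\cdot}\|_{\ell^0(\GG)}$ and $C_0:=\sup_{j}\|a_{\cdot j}\|_{\ell^0(\GG)}$ so that $\|A\|_{\mathcal{S}_{0,1}(\GG)}=\max\{R_0,C_0\}$, it suffices to show $R_q\to R_0$ and $C_q\to C_0$ as $q\to 0$; the conclusion \eqref{lqmatrixlimit} then follows since $(s,t)\mapsto\max\{s,t\}$ is continuous. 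By the evident symmetry of the two expressions I would treat only $R_q$.

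For the upper bound I would exploit the boundedness hypothesis. Put $B:=\|A\|_{\mathcal{S}_{\infty,w}(\GG)}=\sup_{i,j}|a_{ij}|w(i,j)$, which is finite by assumption; the case $A=0$ is trivial, so assume $B>0$. Since only nonzero entries contribute to the row sum and each row has at most $R_0$ of them, for every $i$ one has $\sum_{j}|a_{ij}|^q w(i,j)^q=\sum_{j:\,a_{ij}\neq 0}\big(|a_{ij}|w(i,j)\big)^q\le R_0\,B^q$, and this bound is uniform in $i$. Taking the supremum over $i$ and letting $q\to 0$, so that $B^q\to 1$, yields $\limsup_{q\to 0}R_q\le R_0$.

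For the lower bound the crucial observation is that $R_0$, being a supremum of nonnegative integers that is assumed finite, is \emph{attained}: there is an index $i_0$ whose row has exactly $R_0$ nonzero entries. For that fixed row, $\sum_{j}|a_{i_0 j}|^q w(i_0,j)^q$ is a finite sum of $R_0$ terms, each of the form $\big(|a_{i_0 j}|w(i_0,j)\big)^q$ with $0<|a_{i_0 j}|w(i_0,j)<\infty$, hence each tending to $1$ as $q\to 0$. Therefore $R_q\ge\sum_{j}|a_{i_0 j}|^q w(i_0,j)^q\to R_0$, giving $\liminf_{q\to 0}R_q\ge R_0$. Combining the two estimates gives $R_q\to R_0$, and symmetrically $C_q\to C_0$, which finishes the argument.

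The only genuine difficulty is interchanging the limit $q\to 0$ with the supremum over the (generally infinite) index set $\GG$; the nonconvexity of the $\ell^q$ functionals for $q<1$ plays no role here. This interchange is exactly what the two halves of the argument secure: the uniform factor $B^q$ controls the supremum from above independently of $i$, while the integrality of the row counts guarantees that an extremal row exists and lets me evaluate a genuine finite limit from below. The one point requiring care is that the moduli $|a_{ij}|$ may be arbitrarily small, so a naive termwise bound of the form $\big(|a_{ij}|w(i,j)\big)^q\ge 1$ is \emph{not} available; this is precisely why the lower bound is extracted at a single attaining row rather than uniformly over $\GG$.
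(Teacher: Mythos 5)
Your proof is correct and follows essentially the same route as the paper's: both arguments sandwich $\|A\|_{{\mathcal S}_{q,w}(\GG)}^q$ between the uniform upper bound $\|A\|_{{\mathcal S}_{0,1}(\GG)}\,\big(\sup_{i,j}|a_{ij}|w(i,j)\big)^q$ and a lower bound extracted from an extremal row and column where the finite $\ell^0$-count is attained, then let $q\to 0$. The only cosmetic difference is that you handle the row and column quantities separately and invoke continuity of $\max$ at the end, whereas the paper carries the maximum through the sandwich directly.
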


The most important implication of this theorem is that small values of $q$ (closer to zero) can lead to reasonable approximations of the ideal sparsity measure for matrices. This is particularly true for  spatially decaying matrices with slowly decaying rates, such as polynomially decaying matrices. However, for matrices with rapidly decaying rates, such as sub-exponentially decaying matrices, larger values of $q$ (closer to one) can also result in reasonable measures for sparsity. We discuss this in Section \ref{sec-VI}.

The proposed $\mathcal{S}_{0,1}$--measure has an interesting interoperation when $A$ is adjacency matrix of an unweighted undirected graph. In this case, the value of $\mathcal{S}_{0,1}$--measure is equal to the maximum node degree in that graph. The sparsity $\mathcal{S}_{0,1}$--measure has several advantages over the conventional sparsity measure
\begin{equation}
\hspace{-0.00cm} \|A\|_{0}=\sum_{i,j \in \GG}|a_{ij}|^{0}= \mathbf{card}\big\{a_{ij}\neq 0 ~\big|~ i,j \in \GG \big\}. \label{sparsity-2}
\end{equation}
The value of the $\mathcal{S}_{0,1}$--sparsity measure reveals some valuable information about sparsity as well as spatial locality features of a given sparse matrix, while \eqref{sparsity-2} does not enjoy this property. Moreover, \eqref{sparsity-2} does not exhibit any interesting algebraic property and may not be useful in infinite-dimensional settings. Let us consider the set of all sparse matrices characterized by
\begin{equation}
{\mathcal S}_{0,1}(\GG)=\Big\{A~ \Big|~\|A\|_{{\mathcal S}_{0,1}(\GG)} < \infty   \Big\}.
\end{equation}

\begin{prop}
The $\mathcal{S}_{0,1}$--measure satisfies the following properties:
\begin{itemize}
\item[{(i)}] $\|\alpha A\|_{\mathcal{S}_{0,1}(\GG)} = \|A\|_{\mathcal{S}_{0,1}(\GG)}$;
\item [{(ii)}]
$\|A+B\|_{\mathcal{S}_{0,1}(\GG)}\le \|A\|_{\mathcal{S}_{0,1}(\GG)}+  \|B\|_{\mathcal{S}_{0,1}(\GG)}$; and
\item [{(iii)}]  $\|AB\|_{\mathcal{S}_{0,1}(\GG)}\le \|A\|_{\mathcal{S}_{0,1}(\GG)} \|B\|_{\mathcal{S}_{0,1}(\GG)}$
\end{itemize}
for all nonzero scalars $\alpha$ and matrices $A, B \in \mathcal{S}_{0,1}(\GG)$.
\end{prop}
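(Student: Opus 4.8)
The plan is to verify all three properties by reducing each to an elementary statement about the \emph{supports} (the sets of indices of nonzero entries) of the rows and columns of the matrices involved. This reduction is natural because the $\mathcal{S}_{0,1}$--measure is assembled entirely from the $\ell^0$--measures $\|a_{i\cdot}\|_{\ell^0(\GG)}$ and $\|a_{\cdot j}\|_{\ell^0(\GG)}$, each of which is simply the cardinality of such a support. Two combinatorial facts will do all the work: the support of a sum (or of a matrix product) is contained in a suitable union of supports, and cardinality is subadditive under unions.

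For (i), I would observe that for a nonzero scalar $\alpha$ the entrywise map $a_{ij}\mapsto \alpha a_{ij}$ sends nonzero entries to nonzero entries and zero to zero, so every row and column of $\alpha A$ has exactly the same support as the corresponding row and column of $A$. Hence $\|(\alpha A)_{i\cdot}\|_{\ell^0(\GG)}=\|a_{i\cdot}\|_{\ell^0(\GG)}$ and likewise for columns, and (i) follows at once by taking suprema and then the outer maximum. For (ii), the elementary fact is the support containment $\mathrm{supp}((A+B)_{i\cdot})\subseteq \mathrm{supp}(a_{i\cdot})\cup\mathrm{supp}(b_{i\cdot})$ together with $\mathbf{card}(S\cup T)\le \mathbf{card}(S)+\mathbf{card}(T)$, which give $\|(A+B)_{i\cdot}\|_{\ell^0(\GG)}\le \|a_{i\cdot}\|_{\ell^0(\GG)}+\|b_{i\cdot}\|_{\ell^0(\GG)}$ row by row, and the same for columns. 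Passing to the supremum over $i$ (and over $j$) and using $\sup_i(f(i)+g(i))\le \sup_i f(i)+\sup_i g(i)$ bounds each of the two terms in the defining maximum of $\|A+B\|_{\mathcal{S}_{0,1}(\GG)}$ by $\|A\|_{\mathcal{S}_{0,1}(\GG)}+\|B\|_{\mathcal{S}_{0,1}(\GG)}$, which is (ii).

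The substantive part is (iii). Writing $C=AB$ with entries $c_{ij}=\sum_{k\in\GG} a_{ik}b_{kj}$, I would first note that since $A,B\in\mathcal{S}_{0,1}(\GG)$ each row of $A$ has finitely many nonzero entries, so every such sum has only finitely many nonzero terms and $C$ is well defined entrywise in the infinite-dimensional setting. The crucial observation is the support containment
\[
\mathrm{supp}(c_{i\cdot}) \subseteq \bigcup_{k\in \mathrm{supp}(a_{i\cdot})} \mathrm{supp}(b_{k\cdot}),
\]
which holds because $c_{ij}\neq 0$ forces $a_{ik}\neq 0$ and $b_{kj}\neq 0$ for at least one $k$. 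Applying subadditivity of cardinality to this union and then bounding each $\|b_{k\cdot}\|_{\ell^0(\GG)}$ by $\sup_m\|b_{m\cdot}\|_{\ell^0(\GG)}\le \|B\|_{\mathcal{S}_{0,1}(\GG)}$ yields
\[
\|c_{i\cdot}\|_{\ell^0(\GG)} \le \sum_{k\in \mathrm{supp}(a_{i\cdot})}\|b_{k\cdot}\|_{\ell^0(\GG)} \le \|a_{i\cdot}\|_{\ell^0(\GG)}\,\|B\|_{\mathcal{S}_{0,1}(\GG)},
\]
and taking the supremum over $i$ controls the row part of $\|AB\|_{\mathcal{S}_{0,1}(\GG)}$.

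The symmetric argument, based on $\mathrm{supp}(c_{\cdot j})\subseteq \bigcup_{k\in \mathrm{supp}(b_{\cdot j})}\mathrm{supp}(a_{\cdot k})$, controls the column part, and taking the outer maximum delivers (iii). I expect the only genuine obstacle to be setting up this support-counting estimate correctly: specifically, recognizing that it is the union bound on supports — rather than any cancellation in the sum defining $c_{ij}$ — that produces the submultiplicative inequality, and confirming well-definedness of the product so that the entire argument remains valid over $\GG\subseteq\ZZ^d$ in the infinite-dimensional regime.
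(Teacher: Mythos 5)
Your proof is correct. A point of comparison worth noting: despite the paper's remark that all propositions in Sections 2--6 are proved in the appendix, the paper in fact contains no proof of this particular proposition --- the appendix covers Lemma 2.1, Theorems 5.1, 6.1, 6.2, 6.3 and the block-matrix lemma, but not this statement --- so your argument supplies a proof the authors evidently regarded as elementary and omitted. What you wrote is the natural support-counting argument and it is sound in every step: (i) follows since multiplication by a nonzero scalar preserves supports exactly; (ii) follows from $\mathrm{supp}((A+B)_{i\cdot})\subseteq \mathrm{supp}(a_{i\cdot})\cup\mathrm{supp}(b_{i\cdot})$ and subadditivity of cardinality; and for (iii) you correctly identified the two essential points, namely that finiteness of the rows of $A$ (guaranteed by $A\in\mathcal{S}_{0,1}(\GG)$) makes the product well defined entrywise, and that possible cancellation in $c_{ij}=\sum_k a_{ik}b_{kj}$ can only \emph{shrink} the support, so the union bound $\mathrm{supp}(c_{i\cdot})\subseteq\bigcup_{k\in\mathrm{supp}(a_{i\cdot})}\mathrm{supp}(b_{k\cdot})$ and its column counterpart deliver the submultiplicative estimate. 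Your argument is also consistent in spirit with how the paper proves the analogous algebra properties of $\mathcal{S}_{q,w}(\GG)$ in the proof of Theorem 6.1, where the same row/column estimates are carried out with $q$-norms in place of cardinalities.
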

\vspace{0.1cm}

Properties (i)-(iii) imply that the set ${\mathcal S}_{0,1}(\GG)$ is closed under addition and multiplication. One of the fundamental properties of $\SQ$ that is not inherited by ${\mathcal S}_{0,1}(\GG)$ is the inverse-closedness property. For instance, a Toeplitz band matrix belongs to ${\mathcal S}_{0,1}(\GG)$ while its inverse may not live in ${\mathcal S}_{0,1}(\GG)$. In fact, ${\mathcal S}_{0,1}(\GG)$ is not even an algebra. A formal definition of inverse-closedness is given in Definition \ref{Def:inv-closed}. We show that inverse-closedness property of matrix algebras plays a central role in exploiting various structural properties of spatially distributed systems 
over such matrix algebras.

\begin{rem}
For large values of $q$ (closer to $\infty$), ${\mathcal S}_{q,w}$--measure asymptotically approximates ${\mathcal S}_{\infty,w}$--measure and its value measures degree of spatial localization. On the other end of the spectrum, ${\mathcal S}_{q,w}$--measure asymptotically approximates ${\mathcal S}_{0,1}$--measure for small values of $q$ (closer to $0$). This observation implies that in order to study sparsity features of spatially distributed systems it is necessary to consider range of exponents $ q \leq 1$. A fundamental challenges emerges here as ${\mathcal S}_{q,w}$ is not a Banach space for $0 < q < 1$.  For mathematical purposes, $q=1$ is the best possible choice as ${\mathcal S}_{1,w}$--measure is indeed a norm and convex. More precisely, the matrix space ${\mathcal S}_{1,w}(\GG)$ is the ``minimal" Banach algebra (and therefore, a Banach space) with respect to exponent $q$ among all matrix spaces ${\mathcal S}_{q,w}(\GG)$ for $1 \leq q \leq \infty$. Furthermore, spatially decaying matrices in ${\mathcal S}_{1,w}(\GG)$ enjoy the fastest decay rates among all families of Gr\"ochenig-Schur class of matrices for $0 < q \leq  1$. In Section \ref{sec-VI}, we propose a rigorous criterion and an algorithm to compute an exponent $0 < q < 1$ in order to  ${\mathcal S}_{q,w}$--measure to become a viable sparsity measure.
\end{rem}

\begin{figure}[t]
\begin{center}
\includegraphics[trim = 0mm 0mm 0mm 0mm, clip, width=0.7\textwidth]{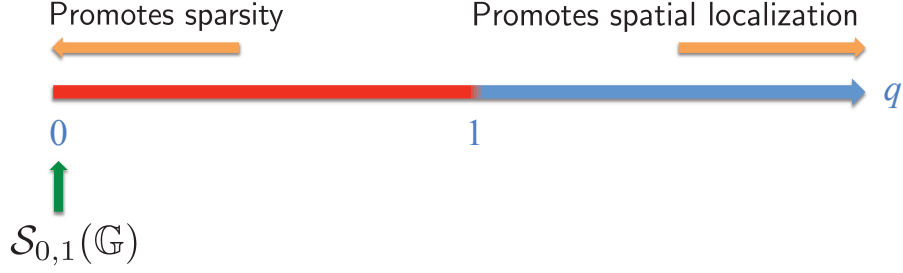}
\caption{{\small For $1 \leq q \leq \infty$, ${\mathcal S}_{q,w}(\GG)$ is a Banach algebra. However, ${\mathcal S}_{q,w}(\GG)$ for $0 < q <1$  is not a Banach space.  The analysis of spatially distributed systems over ${\mathcal S}_{q,w}(\GG)$ for $0 < q \leq 1$ requires new and unconventional techniques based on $q$-Banach algebras.}}
\label{fig-spars-local}
\end{center}
\vspace{-.8cm}
\end{figure}

\section{Sparsity Through $q$-Banach Algebras}\label{sec-IV}

The basic properties of the  Gr\"ochenig-Schur class of spatially decaying matrices pave the way to extend our exploration to more abstract ground in order to capture fundamental properties of spatially decaying systems. In this section, we develop a rigorous mathematical foundation based on the abstract notion of $q$-Banach algebras for range of exponents $0 < q \leq 1$  that enables us to study the sparsity and spatial localization for a general class of spatially decaying systems. The Gr\"ochenig-Schur class of spatially decaying matrices for $0 < q \leq 1$ is an example of a $q$-Banach algebra.

\begin{defn}\label{def-q-banach0}
For  $0<q \leq 1$,  a complex vector space of matrices ${\mathcal A}$ is a {\em $q$-Banach space} equipped with $q$-norm  $\|\cdot\|_{\mathcal A}$  if
it is complete with respect to the metric
$$d_{\mathcal A}(A,B):=\|A-B\|_{\mathcal A}^q$$
for $A, B\in {\mathcal A}$ and the $q$-norm satisfies
\vspace{0.05cm}
 \begin{itemize}
 \item[{(i)}] $\|A\|_{\mathcal A} \ge 0$, and $\|A\|_{\mathcal A}=0$ if and only if $A=0$;
 \item[{(ii)}]  $\|\alpha A\|_{\mathcal A}=  |\alpha|\  \|A\|_{\mathcal A}$; and
 \item[{(iii)}]  $\|A+B\|_{\mathcal A}^q\le \|A\|_{\mathcal A}^q+\|B\|_{\mathcal A}^q$
\end{itemize}
for all $A, B\in {\mathcal A}$ and all $\alpha\in \mathbb{C}$.
\end{defn}

A $q$-Banach space ${\mathcal A}$ with $q=1$ is a Banach space, while it is a quasi-Banach space for any $0<q\le 1$, because
\begin{equation*}
\|A+B\|_{\mathcal A}\le (\|A\|_{\mathcal A}^q+\|B\|_{\mathcal A}^q)^{1/q}
\le 2^{1/q} (\|A\|_{\mathcal A}+\|B\|_{\mathcal A})
\end{equation*}
for all $A, B\in {\mathcal A}$. It is straightforward to verify that  the series $\sum_{n=1}^\infty u_n$ converges
 in the $q$-Banach space  ${\mathcal A}$ if
$\sum_{n=1}^\infty \|u_n\|_{\mathcal A}^q<\infty$.

\begin{defn}\label{def-q-banach}
For  $0<q \leq 1$, a $q$-Banach space ${\mathcal A}$ equipped with $q$-norm  $\|\cdot\|_{\mathcal A}$
 is a {\it $q$-Banach algebra} if
 it contains a unit element $I$, i.e.,
$M:=\|I\|_{\mathcal{A}} < \infty$,  and there exists a constant $K_0 > 0$ such that
\begin{itemize}
\item[{(iv)}] $\|AB\|_{\mathcal A}\le K_{0}  \|A\|_{\mathcal A} \|B\|_{\mathcal A}$ for all $A, B\in {\mathcal A}$.
    \end{itemize}
\end{defn}

One can show that the submultiplicative property for a $q$-Banach algebra holds  by rescaling the  $q$-norm as $\|\cdot\|_{\mathcal A}^*=K_0\|\cdot\|_{\mathcal A}$ and get
$$\|AB\|_{\mathcal A}^*\le  \|A\|_{\mathcal A}^* \|B\|_{\mathcal A}^*, \quad {\rm for\ all} \ \ A, B\in {\mathcal A}.$$

\begin{defn}\label{def-diff}
For a given exponent $0< q \leq 1$ and a Banach algebra ${\mathcal B}$, its $q$-Banach subalgebra ${\mathcal A}$ is a {\it differential} Banach subalgebra of order $\theta \in (0, 1]$ if there exist a constant $D > 0$ such that its $q$-norm satisfies the differential norm property
\begin{equation}
\hspace{0cm} \|AB\|_{\mathcal A}^q ~\le~ D~\|A\|_{\mathcal A}^q~\|B\|_{\mathcal A}^q\left( \left( \frac{\|A\|_{\mathcal{B}}}{\|A\|_{\mathcal{A}}}\right)^{q\theta} + \left(\frac{\|B\|_{\mathcal{B}}}{\|B\|_{\mathcal{A}}}\right)^{q\theta}\right) \nonumber
\end{equation}
for all $A, B\in {\mathcal A}$.
\end{defn}

For a Banach subalgebra ${\mathcal A}$ (i.e., when $q=1$), the above differential norm property
with $\theta\in (0, 1]$
 has been widely used in operator theory and noncommutative
geometry \cite{blackadar91, kissin94, rieffel10}, and  in solving (non)linear functional equations \cite{sunsiam07, sunaicmsubmitted}. The differential norm property plays a crucial role in establishing inverse-closedness property for $q$-Banach algebra, identifying  exponential stability conditions, and exploiting algebraic properties of the unique solutions of Lyapunov and algebraic Riccati equations over $q$-Banach algebras.
One of the most interesting and practical examples of a $q$-Banach algebra is ${\mathcal S}_{q, w}(\GG)$ for $0 < q \leq 1$. It turns out that the $q$-Banach algebra ${\mathcal S}_{q, w}(\GG)$ enjoys the above differential norm property.

\begin{defn}\label{def-proper}
For $0<q \leq 1$,  a
 $q$-Banach algebra ${\mathcal A}$ of  matrices on $\GG$ equipped with $q$-norm $\|\cdot\|_{\mathcal A}$
is said to be  {\it proper}  if it is a $q$-Banach algebra with the following additional properties:

\begin{description}
 \item[{{\bf (P1)}}] $\mathcal{A}$ is closed under the complex conjugate operation, i.e., for all $A\in {\mathcal A}$ we have
\begin{equation}\label{wiener.thm.eq2}
\|A^*\|_{\mathcal A}=\|A\|_{\mathcal A};
\end{equation}
 \item[{{\bf (P2)}}] ${\mathcal A}$ is a subalgebra of ${\mathcal B}(\ell^2(\GG))$ and continuously embedded with respect to it, i.e., for all $A\in {\mathcal A}$ we have
\begin{equation}\label{wiener.thm.eq4}
\|A\|_{{\mathcal B}(\ell^2(\GG))}\le \|A\|_{\mathcal A};
\end{equation}
\item[{{\bf (P3)}}] $\mathcal{A}$ is a differential Banach subalgebra of ${\mathcal B}(\ell^2(\GG))$ of order $\theta\in (0,1]$.
\end{description}
\end{defn}

We should mention that under the assumption \eqref{wiener.thm.eq4},  a differential $q$-Banach subalgebra ${\mathcal A}\subset {\mathcal B}(\ell^2(\GG))$ of order $\theta$ is also  a differential $q$-Banach subalgebra ${\mathcal A}$ of order $\theta'\in (0, \theta)$.

 \begin{thm}\label{schur-cor}
For every $0 < q \leq 1$ and admissible coupling weight function $w$, the space of matrices ${\mathcal S}_{q, w}(\GG)$ is a proper $q$-Banach subalgebra of $\mathcal{B}(\ell^{2}(\GG))$.
\end{thm}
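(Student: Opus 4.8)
The plan is to verify, in turn, that $\mathcal{S}_{q,w}(\GG)$ is a $q$-Banach algebra in the sense of Definition \ref{def-q-banach} and then that it satisfies the three properties (P1)--(P3) of Definition \ref{def-proper}. Most of these are direct consequences of the defining formula \eqref{sqwnorm.def} together with elementary facts about $\ell^q$ quasi-norms for $0<q\le 1$; the only substantial point is the differential norm property (P3), where the admissibility of $w$ enters decisively.

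First I would dispose of the $q$-Banach space and algebra structure. Properties (i)--(ii) of Definition \ref{def-q-banach0} are immediate, and the $q$-triangle inequality (iii) follows by applying the scalar inequality $|s+t|^q\le |s|^q+|t|^q$ inside each row and column sum of \eqref{sqwnorm.def} and then taking suprema. Completeness is the usual argument: since $w\ge 1$, each entry $(a_{ij})$ of a Cauchy sequence is Cauchy in $\CC$, and a Fatou-type estimate shows the entrywise limit lies in $\mathcal{S}_{q,w}(\GG)$ with convergence in the $q$-norm. The unit $I$ satisfies $\|I\|_{\mathcal{S}_{q,w}(\GG)}^q=\sup_i w(i,i)^q<\infty$ by property (iii) of a coupling weight function. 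For submultiplicativity I would use the weak submultiplicativity $w(i,j)\le C_0\,w(i,k)\,w(k,j)$ recorded in the Remark after Lemma \ref{weightexample.lem}: writing $(AB)_{ij}=\sum_k a_{ik}b_{kj}$, applying subadditivity of $t\mapsto t^q$ and then a Schur-type interchange of summation yields $\|AB\|_{\mathcal{S}_{q,w}(\GG)}\le C_0\,\|A\|_{\mathcal{S}_{q,w}(\GG)}\|B\|_{\mathcal{S}_{q,w}(\GG)}$, so $K_0=C_0$ works.

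Next, property (P1) is immediate from the symmetry $w(i,j)=w(j,i)$: conjugate-transposition exchanges the row and column suprema in \eqref{sqwnorm.def}, leaving the outer maximum unchanged. For (P2) I would combine the nesting of $\ell^p$-measures with the Schur test. Since $0<q\le 1$ and $w\ge 1$, the inequality $\|x\|_{\ell^1}\le\|x\|_{\ell^q}$ applied to the rows and columns of $[a_{ij}w(i,j)]$ gives $\|A\|_{\mathcal{S}_{1,w}(\GG)}\le\|A\|_{\mathcal{S}_{q,w}(\GG)}$; the Schur test then bounds $\|A\|_{\mathcal{B}(\ell^2(\GG))}$ by the maximal row and column $\ell^1$-sums of $A$, which are dominated by the $w$-weighted sums since $w\ge 1$, hence by $\|A\|_{\mathcal{S}_{1,w}(\GG)}\le\|A\|_{\mathcal{S}_{q,w}(\GG)}$. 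This also certifies that the embedding ratio $\|A\|_{\mathcal{B}(\ell^2(\GG))}/\|A\|_{\mathcal{S}_{q,w}(\GG)}\le 1$, which is needed below.

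The heart of the proof — and the step I expect to be hardest — is (P3). Here I would bring in the companion weight $u$ from Definition \ref{def-weight} and split each entry of $AB$ using \eqref{uw.eq}:
\begin{equation*}
|(AB)_{ij}|\,w(i,j)\le \sum_k |a_{ik}|\,w(i,k)\,|b_{kj}|\,u(k,j)\;+\;\sum_k |a_{ik}|\,u(i,k)\,|b_{kj}|\,w(k,j).
\end{equation*}
Raising to the $q$-th power, using subadditivity of $t\mapsto t^q$ twice, and summing over $j$ produces two double sums; in the first the factor of $B$ carries the companion weight $u$ and in the second the factor of $A$ does. The decisive estimate is that the $u$-weighted $q$-row-sum $\sum_j |b_{kj}|^q u(k,j)^q$ is controlled by a geometric interpolation between the operator norm and the $\mathcal{S}_{q,w}$-norm of $B$. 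To obtain it I would split the sum over $j$ according to whether $\rho(k,j)<\tau$ or $\rho(k,j)\ge\tau$: on the near part Hölder's inequality with conjugate exponents $2/q$ and $2/(2-q)$ separates $\big(\sum_j |b_{kj}|^2\big)^{q/2}\le\|B\|_{\mathcal{B}(\ell^2(\GG))}^q$ from the factor $\big[\sum_{\rho(k,j)<\tau}|u(k,j)|^{2q/(2-q)}\big]^{1-q/2}$, while on the far part $u(k,j)^q=(u/w)^q\,w(k,j)^q$ pulls out $\sup_{\rho(k,j)\ge\tau}(u/w)^q$ times $\|B\|_{\mathcal{S}_{q,w}(\GG)}^q$. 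Factoring out $\|B\|_{\mathcal{B}(\ell^2(\GG))}^q$ and setting $t=\|B\|_{\mathcal{S}_{q,w}(\GG)}^q/\|B\|_{\mathcal{B}(\ell^2(\GG))}^q\ge 1$ (using (P2)), the bracketed quantity is exactly the left side of the admissibility inequality \eqref{theta.condition}, so taking $\inf_\tau$ and $\sup_k$ yields
\begin{equation*}
\sup_k \sum_j |b_{kj}|^q u(k,j)^q\;\le\; D\,\|B\|_{\mathcal{B}(\ell^2(\GG))}^{q\theta}\,\|B\|_{\mathcal{S}_{q,w}(\GG)}^{q(1-\theta)}.
\end{equation*}
Feeding this, and the symmetric estimate with the roles of $A$ and $B$ interchanged, back into the two double sums, together with the trivial bound $\sum_j |b_{kj}|^q w(k,j)^q\le\|B\|_{\mathcal{S}_{q,w}(\GG)}^q$, reproduces exactly the two terms of the differential norm inequality in Definition \ref{def-diff}. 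The column suprema in \eqref{sqwnorm.def} are handled identically using the symmetry of $\rho$, $w$, and $u$, which completes (P3) and hence the theorem. The main obstacle is bookkeeping the exponents in the Hölder step so that the near-field weight appears with precisely the power $2q/(2-q)$ demanded by \eqref{theta.condition}; everything else is a careful but routine adaptation of the $q=1$ Schur-class arguments to the $q$-subadditive setting.
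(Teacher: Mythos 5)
Your proposal is correct and follows essentially the same route as the paper's own proof: the same verification of the $q$-triangle inequality and submultiplicativity, the same Schur-test argument for the continuous embedding (P2), and, for the decisive differential norm property (P3), exactly the paper's decomposition via the companion-weight inequality \eqref{uw.eq}, the near/far splitting with H\"older exponents $2/q$ and $2/(2-q)$, and the choice $t=\|B\|_{\mathcal{S}_{q,w}}^q/\|B\|_{\mathcal{B}(\ell^2(\GG))}^q$ in the admissibility condition \eqref{theta.condition}. The only (harmless) deviations are cosmetic: you invoke the weak submultiplicativity with constant $C_0$ where the paper uses the assumed strong submultiplicativity \eqref{weight.submultiplication} with constant $1$, and you spell out completeness and the bound on $\|I\|_{\mathcal{S}_{q,w}}$, which the paper leaves as straightforward.
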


The proof of the above theorem is given in Appendix \ref{schur.appendix}.
For $0<q<1$, the space of matrices $\SQ$ is not a Banach space, since
 the $q$-norm does not satisfy the triangle inequality.

\begin{defn}\label{Def:inv-closed}
A subalgebra ${\mathcal A}$ of a Banach algebra ${\mathcal B}$ is {\it inverse-closed} if $A\in {\mathcal A}$ and   $A^{-1}\in {\mathcal B}$ implies that $A^{-1}\in {\mathcal A}$.
\end{defn}

As we discussed earlier in Section \ref{sec-III},
the space of sparse matrices $\mathcal{S}_{0,1}(\GG)$ is not inverse-closed in general. There are several families of Banach subalgebras of  (infinite-dimensional) matrices with some certain off-diagonal decay properties for which  the inverse-closedness property, also known as Wiener's lemma, is inherited from the parent Banach algebra. We refer the reader to  \cite{balan,
baskakov90,
grochenigklotz10,
grochenigl03,
gltams06,  
shincjfa09, 
suncasp05,
suntams07,  sunca11, sunaicmsubmitted}
and   the  survey papers \cite{grochenigsurvey, Krishtal11}, \cite{shin-sun-2013} for more details. It is known that under certain assumptions on the weight function $w$ the Gr\"ochenig-Schur class of matrices ${\mathcal S}_{q, w}(\GG)$ for $1\le q\le \infty$   is an inverse-closed Banach subalgebra  of  ${\mathcal B}(\ell^2(\GG))$. The following new result shows that under some mild assumptions $q$-Banach subalgebras of ${\mathcal B}(\ell^2(\GG))$ for $0 < q \leq 1$ inherit inverse-closedness property from ${\mathcal B}(\ell^2(\GG))$.

\begin{thm}\label{wiener.thm}
For $ 0 < q \leq 1$, suppose that ${\mathcal A}$ is a proper $q$-Banach subalgebra of ${\mathcal B}(\ell^2(\GG))$. Then ${\mathcal A}$ is inverse-closed in ${\mathcal B}(\ell^2(\GG))$.
\end{thm}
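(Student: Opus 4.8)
The plan is to derive this Wiener-type statement from a single quantitative fact: for every self-adjoint element of $\mathcal{A}$ the Gelfand limit $\lim_n\|H^n\|_{\mathcal{A}}^{1/n}$ computed with the $q$-norm coincides with the one computed with the operator norm of $\mathcal{B}=\mathcal{B}(\ell^2(\GG))$, and then to invert by a shifted Neumann series. First I would reduce the statement to positive self-adjoint elements. Given $A\in\mathcal{A}$ with $A^{-1}\in\mathcal{B}$, property (P1) gives $A^*\in\mathcal{A}$, so $H:=A^*A\in\mathcal{A}$ is self-adjoint and positive, and it is invertible in $\mathcal{B}$ with $H^{-1}=A^{-1}(A^{-1})^*$. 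If I can show $H^{-1}\in\mathcal{A}$, then $A^{-1}=H^{-1}A^*\in\mathcal{A}$ since $\mathcal{A}$ is an algebra. Hence it suffices to invert positive self-adjoint elements inside $\mathcal{A}$.

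The heart of the argument is the spectral-radius estimate $\limsup_{n}\|H^n\|_{\mathcal{A}}^{1/n}\le\|H\|_{\mathcal{B}}$ for $H=H^*\in\mathcal{A}$ (the reverse inequality being immediate from (P2)). To prove it I would use that $\mathcal{B}$ is a $C^*$-algebra, so for self-adjoint $H$ the doubling identity $\|H^{2^n}\|_{\mathcal{B}}=\|H\|_{\mathcal{B}}^{2^n}$ holds, and iterate the differential-norm inequality of Definition \ref{def-diff}/(P3) along the dyadic subsequence. Taking $A=B=H^{2^{n-1}}$ in that inequality yields
$$\|H^{2^n}\|_{\mathcal{A}}^q\ \le\ 2D\,\|H^{2^{n-1}}\|_{\mathcal{A}}^{q(2-\theta)}\,\|H^{2^{n-1}}\|_{\mathcal{B}}^{q\theta}.$$
Setting $t_n:=\|H^{2^n}\|_{\mathcal{A}}/\|H\|_{\mathcal{B}}^{2^n}$ (which is $\ge 1$ by (P2)) and substituting $\|H^{2^{n-1}}\|_{\mathcal{B}}=\|H\|_{\mathcal{B}}^{2^{n-1}}$ collapses this to the clean recursion
$$t_n^q\ \le\ 2D\,t_{n-1}^{q(2-\theta)}.$$
Since $2-\theta<2$, iterating this gives $\log t_n=o(2^n)$, hence $t_n^{1/2^n}\to 1$ and $\|H^{2^n}\|_{\mathcal{A}}^{1/2^n}\to\|H\|_{\mathcal{B}}$. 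Because the $q$-norm is submultiplicative after the rescaling noted below Definition \ref{def-q-banach}, $\log\|H^n\|_{\mathcal{A}}$ is subadditive, so $\lim_n\|H^n\|_{\mathcal{A}}^{1/n}$ exists by Fekete's lemma and equals this dyadic limit.

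With the estimate in hand, I would invert $H$ by shifting. Being positive, self-adjoint and invertible in $\mathcal{B}$, the operator $H$ has $\mathcal{B}$-spectrum in $[m,M]$ with $m=\|H^{-1}\|_{\mathcal{B}}^{-1}>0$ and $M=\|H\|_{\mathcal{B}}$. Fix any $c>M$ and set $K:=cI-H\in\mathcal{A}$, which is self-adjoint with $\|K\|_{\mathcal{B}}=c-m$. Applying the spectral-radius estimate to $K$ gives $\lim_n\|K^n\|_{\mathcal{A}}^{1/n}=c-m<c$, so by the root test $\sum_{n\ge 0}c^{-n-1}K^n$ converges in $\mathcal{A}$ (in a $q$-Banach space $\sum_n\|u_n\|_{\mathcal{A}}^q<\infty$ forces convergence). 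By continuity of multiplication its sum telescopes to a two-sided inverse of $cI-K=H$, whence $H^{-1}\in\mathcal{A}$; combined with the reduction this proves $\mathcal{A}$ is inverse-closed.

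I expect the dyadic iteration of the differential-norm property, together with verifying that it forces $t_n^{1/2^n}\to 1$, to be the main obstacle; the $A^*A$ reduction and the shift-and-Neumann-series inversion are routine once the spectral-radius identity is established. A secondary point demanding care is the passage from the dyadic limit to the genuine Gelfand limit, which is legitimate precisely because submultiplicativity makes $\log\|H^n\|_{\mathcal{A}}$ subadditive.
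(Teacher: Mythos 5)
Your proof is correct, and its skeleton coincides with the paper's own proof in Appendix \ref{wienerthm.proofappendix}: reduce to the positive element $A^*A$ via property (P1), control $\mathcal{A}$-norms of powers by iterating the differential-norm inequality of (P3), and finish with a Neumann series whose convergence follows from $q$-summability in a $q$-Banach space. The difference lies in how the middle step is organized, and it is worth recording. The paper proves, by induction on the binary expansion of $n$, the explicit estimate \eqref{nthproduct.equation}, valid for \emph{every} $n$ and \emph{every} $A\in\mathcal{A}$ (not only self-adjoint elements); it then applies this to $B=I-\|A\|_{\mathcal{B}(\ell^2(\GG))}^{-2}A^*A$, which satisfies $0\preceq B\preceq r_0I$ with $r_0<1$, sums $\sum_n B^n$, and writes $A^{-1}=\|A\|_{\mathcal{B}(\ell^2(\GG))}^{-2}(I-B)^{-1}A^*$. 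Note that this is exactly your shifted series at the endpoint $c=\|H\|_{\mathcal{B}(\ell^2(\GG))}$, a choice that also works since $\|cI-H\|_{\mathcal{B}(\ell^2(\GG))}\le c-\|H^{-1}\|_{\mathcal{B}(\ell^2(\GG))}^{-1}<c$. You instead iterate the differential-norm inequality only along dyadic powers of a self-adjoint element, which collapses the bookkeeping to the clean recursion $t_n^q\le 2D\,t_{n-1}^{q(2-\theta)}$, and you recover the full Gelfand limit by Fekete's lemma. Your route buys brevity and a statement of independent interest (spectral-radius invariance for self-adjoint elements, the classical Hulanicki-type lemma behind many Wiener-lemma proofs); in fact the $C^*$-identity you invoke is dispensable, since only the upper bound $\limsup_n\|K^n\|_{\mathcal{A}}^{1/n}\le\|K\|_{\mathcal{B}(\ell^2(\GG))}$ enters the root test, and submultiplicativity of the operator norm already yields that, while the reverse inequality is mere decoration. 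What the paper's route buys is constructiveness: the all-$n$ estimate gives explicit bounds on $\|A^{-1}\|_{\mathcal{A}}$ in terms of $\|A\|_{\mathcal{A}}$, $\|A\|_{\mathcal{B}(\ell^2(\GG))}$ and $\|A^{-1}\|_{\mathcal{B}(\ell^2(\GG))}$ --- a feature the authors emphasize right after Theorem \ref{wiener.thm} and echo in later quantitative remarks --- whereas your Fekete step is purely asymptotic and forfeits the constants.
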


Our proof of Theorem \ref{wiener.thm} in Appendix \ref{wienerthm.proofappendix} is constructive and provides explicit estimates for the $q$-norm of the inverse matrix $A^{-1}$. The results of Theorems \ref{schur-cor} and \ref{wiener.thm} imply that ${\mathcal S}_{q, w}(\GG)$ for $0 < q \leq 1$ is inverse-closed in ${\mathcal B}(\ell^2(\GG))$. The last necessary piece to add to our toolbox is to establish a relationship between the spectral set of a matrix with respect to $ {\mathcal B}(\ell^2(\GG))$
 and its proper $q$-Banach subalgebras.

\begin{thm}\label{thm-spectral}
For $ 0 < q \leq 1$, suppose that ${\mathcal A}$ is a proper $q$-Banach subalgebra of ${\mathcal B}(\ell^2(\GG))$. Then it follows that
\begin{equation}
\sigma_{\mathcal{A}}(A) =\sigma_{{\mathcal B}(\ell^2(\GG))}(A)\quad {\rm for \ all} ~~ A\in {\mathcal A}, \label{spectral-thm}
\end{equation}
where the complement of the spectral set of $A\in \mathcal{A}$ is given by
\begin{equation}
\CC\backslash \sigma_{\mathcal{A}}(A):=\big\{ \lambda \in \CC~\big|~\lambda I - A ~\textrm{has bounded inverse in}~ \mathcal{A}\big\}. \nonumber
\end{equation}
\end{thm}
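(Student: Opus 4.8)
The plan is to deduce the spectral-invariance identity \eqref{spectral-thm} directly from the inverse-closedness established in Theorem \ref{wiener.thm}, since spectral invariance and inverse-closedness are really two faces of the same phenomenon. Concretely, I would prove the two set inclusions $\sigma_{\mathcal{B}(\ell^2(\GG))}(A)\subseteq \sigma_{\mathcal{A}}(A)$ and $\sigma_{\mathcal{A}}(A)\subseteq \sigma_{\mathcal{B}(\ell^2(\GG))}(A)$ separately, arguing at the level of the resolvent sets $\CC\setminus\sigma$. Before either inclusion, I record the standing fact that for every $\lambda\in\CC$ the operator $\lambda I-A$ lies in $\mathcal{A}$: indeed $A\in\mathcal{A}$ by hypothesis and $I\in\mathcal{A}$ because $\mathcal{A}$ is a $q$-Banach algebra with unit $I$ (Definition \ref{def-q-banach}), and this unit coincides with the identity operator, which is the unit of $\mathcal{B}(\ell^2(\GG))$, since $\mathcal{A}$ is a unital subalgebra of $\mathcal{B}(\ell^2(\GG))$ by property (P2).

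For the easy inclusion $\sigma_{\mathcal{B}(\ell^2(\GG))}(A)\subseteq\sigma_{\mathcal{A}}(A)$, I would take $\lambda\in\CC\setminus\sigma_{\mathcal{A}}(A)$, so that $\lambda I-A$ has a bounded inverse $(\lambda I-A)^{-1}\in\mathcal{A}$. By the continuous embedding \eqref{wiener.thm.eq4} of property (P2), this inverse is again a bounded operator on $\ell^2(\GG)$, and it inverts $\lambda I-A$ in $\mathcal{B}(\ell^2(\GG))$ as well; hence $\lambda\in\CC\setminus\sigma_{\mathcal{B}(\ell^2(\GG))}(A)$. Passing to complements gives the claimed inclusion.

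For the substantive inclusion $\sigma_{\mathcal{A}}(A)\subseteq\sigma_{\mathcal{B}(\ell^2(\GG))}(A)$, I would take $\lambda\in\CC\setminus\sigma_{\mathcal{B}(\ell^2(\GG))}(A)$, so that $\lambda I-A$ has a bounded inverse in $\mathcal{B}(\ell^2(\GG))$. Since $\lambda I-A\in\mathcal{A}$ by the standing fact and $\mathcal{A}$ is inverse-closed in $\mathcal{B}(\ell^2(\GG))$ by Theorem \ref{wiener.thm}, it follows that $(\lambda I-A)^{-1}\in\mathcal{A}$; thus $\lambda\in\CC\setminus\sigma_{\mathcal{A}}(A)$. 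Combining the two inclusions yields \eqref{spectral-thm}.

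The point to stress is that this theorem carries essentially no independent analytic difficulty: all of the substance lives in the inverse-closedness Theorem \ref{wiener.thm}, whose constructive proof already supplies the explicit $q$-norm estimate for $(\lambda I-A)^{-1}$ in $\mathcal{A}$. The only mild obstacle here is bookkeeping rather than analysis, namely confirming that the unit of $\mathcal{A}$ and that of $\mathcal{B}(\ell^2(\GG))$ agree, so that the two notions of invertibility refer to the same identity, and that $\lambda I-A$ genuinely lies in $\mathcal{A}$, both of which are immediate consequences of $\mathcal{A}$ being a unital subalgebra of $\mathcal{B}(\ell^2(\GG))$.
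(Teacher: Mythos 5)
Your proposal is correct and follows essentially the same route as the paper's own proof: one inclusion ($\sigma_{\mathcal{A}}(A)\subseteq\sigma_{\mathcal{B}(\ell^2(\GG))}(A)$) from the inverse-closedness of Theorem \ref{wiener.thm}, and the reverse inclusion from the continuous embedding property (P2). Your write-up merely spells out the resolvent-set bookkeeping (including the agreement of units) that the paper leaves implicit.
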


This spectral-invariance property will help us in the following sections to develop a general framework to study the sparsity and spatial localization features of spatially distributed systems.

\section{Exponential Stability over $q$-Banach Algebras} \label{sec:exp-stability}

For the LQR problem in Section \ref{sec:LQR}, one of the main requirements is that the optimal solution of the LQR problem has to be exponentially stabilizing.  Therefore, we need to revisit the notion of exponential stability for linear systems over $q$-Banach algebras. It is said that a matrix $A$  in a ($q$-)Banach algebra ${\mathcal A}$ is {\em exponentially stable} if there exist strictly positive constants $E$ and $\alpha$ such that
\begin{equation}\label{es.def}
\|e^{tA}\|_{\mathcal A}~\le~ E \hspace{0.05cm}e^{-\alpha t}~~~\textrm{for all}~~~ t\ge 0.
\end{equation}
Then exponential stability of matrix $A$ in a proper $q$-Banach subalgebra $\mathcal{A}$ of ${\mathcal B}(\ell^2(\GG))$ is equivalent to its exponential stability in ${\mathcal B}(\ell^2(\GG))$. The necessity follows from
the fact that ${\mathcal A}$ is continuously imbedded in  ${\mathcal B}(\ell^2(\GG))$.
The sufficiency  follows from the following theorem, which  expresses this relationship in more explicit form by characterizing decay rate of the $q$-norm of the strongly continuous semigroup of $A$.

\begin{thm}\label{exponentialstablesqw.thm}
Suppose that $ 0 < q \leq 1$ and ${\mathcal A}$ is a proper $q$-Banach subalgebra of ${\mathcal B}(\ell^2(\GG))$.
If $A\in {\mathcal A}$ is exponentially stable in ${\mathcal B}(\ell^2(\GG))$, i.e., there exist  some  constants $E,\alpha>0$ such that
\begin{equation}\label{es.def0}
\|e^{tA}\|_{{\mathcal B}(\ell^2(\GG))}\le E e^{-\alpha t}
\end{equation}
for all $t\ge 0$, then
\begin{equation}\label{exponentialst ablesqw.lem.eq1}
\|e^{tA}\|_{\mathcal A}^q ~\le~
C(t)~ e^{-\alpha q t}
\end{equation}
for all $t\ge 0$, where constants
$M, D, K_{0}, \theta$ are defined in Definitions \ref{def-q-banach}, \ref{def-diff} and \ref{def-proper},
\begin{eqnarray}
C(t)& = & M^q  \hspace{0.03cm} \Xi_{0} \hspace{0.03cm} e^{qE}
 \left(2DE^{q\theta}
 \right)^{\omega(t)}, \nonumber \\
\Xi_{0}&=& \sum_{n=0}^\infty ~\frac{ K_{0}^{nq}}{(n!)^q} ~\left(  \frac{\|A\|_{\mathcal A}}{\|A\|_{{\mathcal B}(\ell^2(\GG))}}\right)^{nq},
\end{eqnarray}
and
\begin{equation*}
\omega(t) =\left\{\begin{array}{ccc}
(1-\theta)^{-1} \big(2\hspace{0.05cm}t\hspace{0.05cm}
\|A\|_{ {\mathcal B}(\ell^2(\GG))}\big)^{\log_2(2-\theta)} & \textrm{if} & \theta\in (0, 1)\\
 &  &  \\
  \log_2 \big(1+2t \|A\|_{ {\mathcal B}(\ell^2(\GG))} \big)  & \textrm{if}  & \theta=1
\end{array}\right. .
\end{equation*}
\end{thm}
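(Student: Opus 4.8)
The necessity of the estimate is immediate from the continuous embedding \eqref{wiener.thm.eq4}, so the whole content is the quantitative decay bound on the $\mathcal{A}$--norm of the semigroup. Throughout write $\mathcal{B} := \mathcal{B}(\ell^2(\GG))$. The plan is to first strip the exponential decay off the semigroup, then run a dyadic doubling recursion powered by the differential norm property (P3) of Definition \ref{def-proper}, and finally resolve the resulting nonlinear recursion. To strip the decay, set $G(t) := e^{t(A+\alpha I)} = e^{\alpha t}e^{tA}$. By homogeneity of the $q$--norm (property (ii) of Definition \ref{def-q-banach0}) we have $\|e^{tA}\|_{\mathcal A}^q = e^{-\alpha q t}\,\|G(t)\|_{\mathcal A}^q$, and by hypothesis \eqref{es.def0} the rescaled family is \emph{uniformly} bounded in $\mathcal{B}$, namely $\|G(t)\|_{\mathcal{B}} = e^{\alpha t}\|e^{tA}\|_{\mathcal{B}}\le E$ for all $t\ge 0$. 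Writing $g(t) := \|G(t)\|_{\mathcal A}^q$, it then suffices to prove $g(t)\le C(t)$, since multiplying by $e^{-\alpha q t}$ recovers the claimed bound.

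The engine is a self-improving estimate obtained by squaring. Since $G(t) = G(t/2)\,G(t/2)$, applying the differential norm property of Definition \ref{def-diff} with $A=B=G(t/2)$ and then inserting $\|G(t/2)\|_{\mathcal{B}}\le E$ gives
\[
g(t) = \|G(t/2)^2\|_{\mathcal A}^q \le 2D\,\|G(t/2)\|_{\mathcal A}^{q(2-\theta)}\,\|G(t/2)\|_{\mathcal{B}}^{q\theta} \le 2D E^{q\theta}\,g(t/2)^{\,2-\theta}.
\]
Abbreviating $\gamma := 2DE^{q\theta}$ and $\lambda := 2-\theta\in(1,2]$, this is the nonlinear recursion $g(t)\le \gamma\, g(t/2)^{\lambda}$, valid for every $t\ge 0$. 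It is exactly here that control of the weaker $\mathcal{B}$--norm (through $E$) is upgraded to control of the stronger $\mathcal{A}$--norm, and the differential order $\theta$ enters through the exponent $\lambda$.

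Iterating the recursion $k$ times yields $g(t)\le \gamma^{\,(\lambda^k-1)/(\lambda-1)}\, g(t/2^k)^{\lambda^k}$, where the geometric factor is $\sum_{j=0}^{k-1}\lambda^{j} = (\lambda^k-1)/(\lambda-1)$ and $\lambda-1 = 1-\theta$. I would choose the dyadic depth $k$ comparable to $\log_2(2t\|A\|_{\mathcal{B}})$, so that the innermost scale $s := t/2^k$ is of order $\|A\|_{\mathcal{B}}^{-1}$ and $\lambda^k = (2^k)^{\log_2\lambda}$ matches $(2t\|A\|_{\mathcal{B}})^{\log_2(2-\theta)}$; this produces the exponent $\log_2(2-\theta)$ and the factor $(1-\theta)^{-1}$ in $\omega(t)$. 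At the innermost scale the factor $G(s)=e^{s(A+\alpha I)}$ is estimated directly from its power series: the $q$--subadditivity (iii) of Definition \ref{def-q-banach0} gives
\[
g(s) = \Big\|\sum_{n=0}^{\infty}\frac{s^{n}(A+\alpha I)^{n}}{n!}\Big\|_{\mathcal A}^{q} \le \sum_{n=0}^{\infty}\frac{s^{nq}}{(n!)^{q}}\,\|(A+\alpha I)^{n}\|_{\mathcal A}^{q},
\]
and the submultiplicativity (iv) of Definition \ref{def-q-banach}, together with $M=\|I\|_{\mathcal A}$ for the $n=0$ term and the bound $\|A\|_{\mathcal{B}}\le\|A\|_{\mathcal A}$, bounds the right-hand side by a quantity of the shape $M^{q}\,\Xi_0\,e^{qE}$; this is where these three constants originate. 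In the boundary case $\theta=1$ one has $\lambda=1$, the recursion degenerates to the linear one $g(t)\le\gamma\,g(t/2)$, and iterating $k\approx\log_2(1+2t\|A\|_{\mathcal{B}})$ times immediately reproduces the logarithmic form of $\omega(t)$.

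The main obstacle is the interaction between the nonconvexity of the $q$--norm and the compounding of the innermost factor. Because only the $q$--subadditive inequality (iii) is available — there is no triangle inequality and the space is not locally convex — every splitting must be carried out at the level of $q$--th powers, which is why the recursion is nonlinear with exponent $\lambda=2-\theta$ rather than additive. Consequently the base term $g(s)$ enters raised to the power $\lambda^k$, and the delicate point is to choose the cutoff $k$ (equivalently $s\sim\|A\|_{\mathcal{B}}^{-1}$) so as to balance the sub-exponential growth of the accumulated constant $\gamma^{(\lambda^k-1)/(\lambda-1)}\le\gamma^{\omega(t)}$ against this compounded innermost contribution, and to verify that the combined bound is dominated by $C(t)$ uniformly in $t\ge 0$. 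Finally one checks that the sub-exponential growth $\gamma^{\omega(t)}=\exp\!\big(O(t^{\log_2(2-\theta)})\big)$, with $\log_2(2-\theta)<1$ for $\theta\in(0,1)$, is overwhelmed by the extracted factor $e^{-\alpha q t}$ for large $t$, so that the product still decays — which is the qualitative statement that exponential stability in $\mathcal{B}$ forces exponential stability in $\mathcal{A}$.
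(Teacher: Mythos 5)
Your proposal is essentially the paper's own proof: writing $\mathcal{B}$ for $\mathcal{B}(\ell^2(\GG))$, it uses the same two ingredients — the power-series bound $\|e^{sA}\|_{\mathcal{A}}^q\le M^q\Xi_0e^{qE}e^{-\alpha qs}$ on the innermost scale $s\le\|A\|_{\mathcal{B}}^{-1}$ (with $\alpha/\|A\|_{\mathcal{B}}\le E$ coming from $A^{-1}=-\int_0^\infty e^{tA}\,dt$) and the doubling step $\|e^{tA}\|_{\mathcal{A}}^q\le 2DE^{q\theta}e^{-q\theta\alpha t/2}\|e^{tA/2}\|_{\mathcal{A}}^{q(2-\theta)}$ from the differential norm property — iterated to the same dyadic depth $t\|A\|_{\mathcal{B}}\le 2^m<2t\|A\|_{\mathcal{B}}$, your rescaling $G(t)=e^{\alpha t}e^{tA}$ being only a bookkeeping variant of the paper's device of carrying the decay factors through the iteration and using $\tfrac{\theta}{2}\sum_{j=0}^{m-1}r^j+r^m=1$ with $r=(2-\theta)/2$. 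The one ``delicate point'' you flag, that the innermost bound enters raised to the power $(2-\theta)^m$, is genuine but equally present in the paper: its final display silently replaces $\bigl(M^q\Xi_0e^{qE}\bigr)^{(2-\theta)^m}$ by $M^q\Xi_0e^{qE}$, so for $\theta\in(0,1)$ both arguments actually establish the estimate with the slightly larger sub-exponential base $(M^q\Xi_0e^{qE})^{1-\theta}\,2DE^{q\theta}$ in place of $2DE^{q\theta}$ (for $\theta=1$ the recursion is linear and no compounding occurs).
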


\begin{proof}
For all $t \in \big[\hspace{0.05cm}0,\hspace{0.05cm}\|A\|_{{\mathcal B}(\ell^2(\GG))}^{-1}\hspace{0.05cm}\big]$, we have
\begin{eqnarray}
\big\|e^{tA}\big\|_{\mathcal A}^q & \le &  \sum_{n=0}^\infty ~\Big(\frac{ t^n}{n!}\Big)^q
~\|A^n\cdot I\|_{{\mathcal A}}^{q} \nonumber \\
& \le &  M^q ~\sum_{n=0}^\infty ~\frac{ K_0^{nq}}{(n!)^q} \left(  \frac{\|A\|_{\mathcal A}}{\|A\|_{{\mathcal B}(\ell^2(\GG))}}\right)^{nq}.\label{exponentialstablesqw.lem.pf.eq1}
\end{eqnarray}
The first inequality follows from the  triangle inequality for $\|\cdot\|_{\mathcal A}^q$  and
 the Taylor expansion $e^{tA}=\sum_{n=0}^\infty \frac{t^nA^n}{n!}$,
 and  the second inequality holds due to the submultiplicative property (iv) in Definition \ref{def-q-banach}.
From this result, it follows that
\begin{equation} \label{exponentialstablesqw.lem.pf.eq2}
\big\|e^{tA}\big\|_{\mathcal A}^q ~\le~  M^q ~\Xi_{0} ~e^{\frac{\alpha q}{\|A\|_{{\mathcal B}(\ell^2(\GG))}}}~
 e^{-\alpha q  t}
\end{equation}
for all $t \in \big[\hspace{0.05cm}0,\hspace{0.05cm}\|A\|_{{\mathcal B}(\ell^2(\GG))}^{-1}\hspace{0.05cm}\big]$. From the following identity
\[ A^{-1}=-\int_0^\infty e^{tA} dt ~~~~ {\rm in } ~~ {\mathcal B}(\ell^2(\GG)),\]
Property (P2) in Definition \ref{def-proper} and the exponential stability
assumption  \eqref{es.def0}, one can conclude that
$\|A^{-1} \|_{{\mathcal B}(\ell^2(\GG))}\le \frac{E}{\alpha}$. Thus, we get
\begin{equation*}
\frac{\alpha}{\|A\|_{{\mathcal B}(\ell^2(\GG))}}~\le~ \frac{E}{\|A^{-1}\|_{{\mathcal B}(\ell^2(\GG))}\|A\|_{{\mathcal B}(\ell^2(\GG))}}~\le~ E.
\end{equation*}
This together with \eqref{exponentialstablesqw.lem.pf.eq2} implies that
\begin{equation} \label{exponentialstablesqw.lem.pf.eq3}
\big\|e^{tA}\big\|_{\mathcal A}^q ~\le~  M^q ~\Xi_{0}~ e^{qE} ~e^{-\alpha q t}
\end{equation}
for all $t \in \big[\hspace{0.05cm}0,\hspace{0.05cm}\|A\|_{{\mathcal B}(\ell^2(\GG))}^{-1}\hspace{0.05cm}\big]$. For every $t~> ~ \|A\|_{{\mathcal B}(\ell^2(\GG))}^{-1}$, let denote $m$ to be the smallest positive integer such that
\[2^{-m} t\in \big(0,~  \|A\|_{{\mathcal B}(\ell^2(\GG))}^{-1}\big],\]
or equivalently,
\begin{equation} \label{exponentialstablesqw.lem.pf.eq4}
 t \|A\|_{ {\mathcal B}(\ell^2(\GG))}~\le~  2^m~<~ 2 t \|A\|_{{\mathcal B}(\ell^2(\GG))}.
\end{equation}
By applying the differential norm property given in Definition \ref{def-diff} 
and the exponential stability assumption \eqref{es.def0} leads to
\begin{eqnarray}
\|e^{tA}\|_{\mathcal A}^q & \le &  2 D ~\|e^{\frac{t}{2}A}\|_{\mathcal A}^{q(2-\theta)}~
\|e^{\frac{t}{2}A}\|_{{\mathcal B}(\ell^2(\GG))}^{q\theta} \nonumber\\
& \le &  2DE^{q\theta}~
 e^{-q \theta  \alpha t/2} ~\|e^{\frac{t}{2}A}\|_{\mathcal A}^{q(2-\theta)}. ~~~~~~~~~~\label{exponentialstablesqw.lem.pf.eq5}
\end{eqnarray}
Then, it follows that
\begin{eqnarray}
\hspace{0cm} \|e^{tA}\|_{\mathcal A}^q & \le & \left(2DE^{q\theta}\right)^{1+ (2-\theta)} ~ e^{-\frac{\theta}{2}   \left(1+\frac{2-\theta}{2}\right) q \alpha t} 
 \|e^{\frac{tA}{4}}\|_{\mathcal A}^{q(2-\theta)^2 }\nonumber\\
& \le &  \cdots\nonumber\\
&  \leq & \Big(2DE^{q\theta}\Big)^{\sum_{j=0}^{m-1} (2-\theta)^j}e^{- \frac{\theta}{2} \left(\sum_{j=0}^{m-1}\left(\frac{2-\theta}{2}\right)^j \right) q \alpha t} ~\|e^{\frac{tA}{2^m}}\|_{{\mathcal A}}^{q(2-\theta)^m }\nonumber\\
& \leq &  M^q ~\Xi_{0}~ e^{qE}
\Big(2DE^{q\theta}\Big)^{\sum_{j=0}^{m-1} (2-\theta)^j} e^{-q\alpha t}
\end{eqnarray}
%
for all $t\ge  \|A\|_{{\mathcal B}(\ell^2(\GG))}^{-1}$.
In the above inequalities, the first three inequalities follow from
applying \eqref{exponentialstablesqw.lem.pf.eq5} repeatedly,
and the last inequality hold by \eqref{exponentialstablesqw.lem.pf.eq3}.
 This combines with \eqref{exponentialstablesqw.lem.pf.eq4} to show that the desired  exponential stability property \eqref{exponentialst ablesqw.lem.eq1}  in ${\mathcal A}$ holds for all $t \geq 0$.
\end{proof}

We should clarify that $C(t)$ in the right hand side of \eqref{exponentialst ablesqw.lem.eq1} has a sub-exponential growth for $0<\theta<1$ and a polynomial growth for $\theta=1$. Thus, the right hand side of \eqref{exponentialst ablesqw.lem.eq1} vanishes exponentially as $t$ goes to infinity, which implies that $A$ is exponentially stable in ${\mathcal A}$.

In the next step, we consider Lyapunov stability of linear systems that are defined over $q$-Banach algebras.

\section{Lyapunov Equations over $q$-Banach Algebras} \label{sec-V}
In this section, we  study structural properties of solutions of Lyapunov equations for linear systems  over $q$-Banach algebras. Our goal is to determine degrees of sparsity and spatial localization for the solution of the algebraic Lyapunov equation. We recall a well-known result about solving the algebraic Lyapunov equation in ${\mathcal B}(\ell^2(\GG))$; for more details see \cite[P. 76]{barnettbook70} and
\cite[Theorem 1]{bunce85}.
Suppose that $Q\in {\mathcal B}(\ell^2(\GG))$ is  strictly positive on $\ell^2(\GG)$, and
 $A\in {\mathcal B}(\ell^2(\GG))$  is exponentially stable on $\ell^2(\GG)$.
Then, there exists a unique  strictly positive  solution $P$ in  ${\mathcal B}(\ell^2(\GG))$ for
the  Lyapunov equation
\begin{equation}\label{lyapunov.equation}
AP+PA^*+Q=0,
\end{equation}
which has the following compact form
\begin{equation} \label{lyapunov.classical.lem.eq1}P~=~\int_0^\infty e^{tA^*} \hspace{0.05cm} Q \hspace{0.05cm} e^{tA}~ dt ~~~\textrm{in}~~~{\mathcal B}(\ell^2(\GG)).
\end{equation}

Our goal is to characterize sufficient conditions under which the unique solution of a Lyapunov equation with coefficients in a $q$-Banach algebra $\mathcal{A}$ also belongs to $\mathcal{A}$. Since $\mathcal{A}$ is not a Banach space for $0 < q <1$, we cannot  directly apply the explicit expression \eqref{lyapunov.classical.lem.eq1}   to tackle this problem. The reason is that continuous integrals, such as \eqref{lyapunov.classical.lem.eq1}, cannot be defined properly on $q$-Banach algebras  as it is not a Banach space. In order to overcome this challenge, first we introduce a series of matrices $\big\{ P_m \big\}_{m \geq 0}$ using the following iterative process
\begin{equation}  \label{lyapunov.thm.pf.eq5}
P_m ~=~ e^{A^* \hspace{0.05cm} \|A\|_{\mathcal{B}(\ell^2(\GG))}^{-1}}~P_{m-1} ~e^{A \hspace{0.05cm} \|A\|_{\mathcal{B}(\ell^2(\GG))}^{-1}}~+~P_{0}\end{equation}
for $m \geq 1$, with initial value
\begin{equation}\label{lyapunov.thm.pf.eq1}
P_{0}~=~\sum_{n=0}^\infty \hspace{0.05cm}\sum_{m=0}^\infty ~\frac{\|A\|_{{\mathcal B}(\ell^2(\GG))}^{-m-n-1}}{(m+n+1) \hspace{0.05cm} m! \hspace{0.05cm} n!} ~(A^*)^m \hspace{0.05cm} Q  \hspace{0.05cm} A^n.
\end{equation}
Then we apply the result of Theorem \ref{exponentialstablesqw.thm} on  exponential stability in a proper $q$-Banach algebra
to prove the convergence of the proposed iterative procedure in the $q$-Banach algebra. It can be shown that  $P_m \in {\mathcal A}$ for all $m\ge 0$ and that the series converges in ${\mathcal A}$. In fact, the limit of series $\big\{ P_m \big\}_{m \geq 0}$ as $m$ tends to infinity converges to the unique solution of \eqref{lyapunov.equation}.

\begin{thm}\label{lyapunov.thm}
For $ 0 < q \leq 1$, suppose that ${\mathcal A}$ is a proper $q$-Banach subalgebra of ${\mathcal B}(\ell^2(\GG))$.
Assume that $Q\in {\mathcal A}$ is  strictly positive on $\ell^2(\GG)$ and that
 $A\in {\mathcal A}$  is exponentially stable on $\ell^2(\GG)$. Then, the unique  strictly positive solution of
the  Lyapunov equation \eqref{lyapunov.equation} belongs to ${\mathcal A}$. Moreover, we have the following bound estimation
\begin{equation}
\|P\|_{\mathcal A}^q ~ \le ~ \Xi_{0}^{2}\hspace{0.07cm}\Pi_{0}~
\frac{ \|Q\|_{\mathcal A}^q}{\|A\|_{{\mathcal B}(\ell^2(\GG))}^q}
\label{bound-lyap}
\end{equation}
where
\[
\Pi_{0}=1+
K_{0}^{2q}  M^{2q}  \Xi_{0}^2 e^{2qE}
 ~\sum_{k=1}^\infty ~
  \left(2DE^{q\theta}
 \right)^{2 \omega_k} \exp \left(-\frac{2 \alpha  q k}{\|A\|_{{\mathcal B}(\ell^2(G))}}\right)
, \]
in which
 constants $E, \alpha$ are defined in \eqref{es.def0} and $M, D, K_{0}, \theta$ in Definitions \ref{def-q-banach}--\ref{def-proper}, and
\[
\omega_k~=~\left\{\begin{array}{ccc}
(1-\theta)^{-1} (2k)^{\log_2(2-\theta)} & {\rm if}  & \theta\in (0, 1) \\
 &  &  \\
\log_2(1+2k) & {\rm if} & \theta=1
\end{array}\right..
\]
\end{thm}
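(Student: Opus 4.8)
The plan is to realize the classical solution \eqref{lyapunov.classical.lem.eq1} as a discrete series that converges directly in the $q$-Banach algebra $\mathcal{A}$, thereby bypassing the continuous integral which is unavailable outside a Banach space. Setting $h := \|A\|_{\mathcal{B}(\ell^2(\GG))}^{-1}$ and splitting the integral over the intervals $[kh,(k+1)h]$, the change of variable $t\mapsto kh+s$ together with the semigroup identity $e^{(kh+s)A}=e^{khA}e^{sA}$ (valid since $A$ is bounded) yields the exact decomposition $P=\sum_{k=0}^\infty e^{khA^*}P_0\,e^{khA}$ with $P_0=\int_0^h e^{sA^*}Q\,e^{sA}\,ds$. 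Expanding the two exponentials and integrating term by term produces precisely the double series \eqref{lyapunov.thm.pf.eq1}, and the partial sums $S_m:=\sum_{k=0}^m e^{khA^*}P_0\,e^{khA}$ are exactly the iterates $P_m$ in \eqref{lyapunov.thm.pf.eq5}. Thus it will suffice to prove that $\sum_k e^{khA^*}P_0\,e^{khA}$ converges in $\mathcal{A}$; its sum must then coincide with the classical limit $P$ in $\mathcal{B}(\ell^2(\GG))$, because by the continuous embedding (P2) of Definition \ref{def-proper} convergence in $\mathcal{A}$ forces convergence in $\mathcal{B}(\ell^2(\GG))$, while there $S_m=\int_0^{(m+1)h}e^{tA^*}Q\,e^{tA}\,dt\to P$ already.

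First I would bound $P_0$ in $\mathcal{A}$. Applying the $q$-triangle inequality to \eqref{lyapunov.thm.pf.eq1}, the submultiplicativity (iv) of Definition \ref{def-q-banach}, and the conjugation symmetry (P1), namely $\|A^*\|_{\mathcal{A}}=\|A\|_{\mathcal{A}}$, and discarding the harmless factor $(m+n+1)^{-q}\le 1$, gives
\[
\|P_0\|_{\mathcal{A}}^q \le \|Q\|_{\mathcal{A}}^q\, h^q \sum_{m,n\ge 0}\frac{1}{(m!\,n!)^q}\Big(\frac{K_0\|A\|_{\mathcal{A}}}{\|A\|_{\mathcal{B}(\ell^2(\GG))}}\Big)^{(m+n)q}.
\]
The double sum factors as $\Xi_0^2$, whence $\|P_0\|_{\mathcal{A}}^q\le \Xi_0^2\,\|Q\|_{\mathcal{A}}^q/\|A\|_{\mathcal{B}(\ell^2(\GG))}^q$, which is exactly the $k=0$ contribution carried by the leading $1$ in $\Pi_0$.

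Next I would control each tail term. Using (iv), (P1), and then Theorem \ref{exponentialstablesqw.thm} at time $t=kh$ — where the normalization $kh\,\|A\|_{\mathcal{B}(\ell^2(\GG))}=k$ makes $\omega(kh)=\omega_k$ identically — I obtain $\|e^{khA}\|_{\mathcal{A}}^q\le M^q\Xi_0 e^{qE}(2DE^{q\theta})^{\omega_k}e^{-\alpha q k/\|A\|_{\mathcal{B}(\ell^2(\GG))}}$, and consequently
\[
\|e^{khA^*}P_0\,e^{khA}\|_{\mathcal{A}}^q \le K_0^{2q}M^{2q}\Xi_0^2 e^{2qE}(2DE^{q\theta})^{2\omega_k}\,e^{-2\alpha q k/\|A\|_{\mathcal{B}(\ell^2(\GG))}}\,\|P_0\|_{\mathcal{A}}^q.
\]
Summing over $k\ge 1$ and restoring the $k=0$ term reproduces $\|P\|_{\mathcal{A}}^q\le \Xi_0^2\,\Pi_0\,\|Q\|_{\mathcal{A}}^q/\|A\|_{\mathcal{B}(\ell^2(\GG))}^q$, which is precisely \eqref{bound-lyap}.

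The main obstacle — and the single place where the explicit decay rate of Theorem \ref{exponentialstablesqw.thm} is indispensable — is establishing $\Pi_0<\infty$, i.e. the summability of the defining series. This rests on the competition between the sub-exponential prefactor $(2DE^{q\theta})^{2\omega_k}$ and the genuine exponential decay $e^{-2\alpha q k/\|A\|_{\mathcal{B}(\ell^2(\GG))}}$: since $\omega_k$ grows only like $k^{\log_2(2-\theta)}$ with exponent $\log_2(2-\theta)<1$ when $\theta\in(0,1)$, or like $\log_2 k$ when $\theta=1$, the exponential factor dominates and the series converges. Absolute convergence of $\sum_k\|e^{khA^*}P_0\,e^{khA}\|_{\mathcal{A}}^q$ then yields convergence of the series in the $q$-Banach space $\mathcal{A}$ (as noted after Definition \ref{def-q-banach0}); its limit equals $P$ by (P2), while strict positivity and uniqueness are inherited verbatim from the classical theory in $\mathcal{B}(\ell^2(\GG))$. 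Hence $P\in\mathcal{A}$ with the asserted bound.
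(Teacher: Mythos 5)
Your proposal is correct and follows essentially the same route as the paper: the same partition of the classical integral into steps of length $\|A\|_{\mathcal{B}(\ell^2(\GG))}^{-1}$ (the paper packages this as the iteration $P_m = e^{A^*h}P_{m-1}e^{Ah}+P_0$, whose partial sums are exactly your $S_m$), the same bound $\|P_0\|_{\mathcal A}^q \le \Xi_0^2\|Q\|_{\mathcal A}^q/\|A\|_{\mathcal{B}(\ell^2(\GG))}^q$, the same application of Theorem \ref{exponentialstablesqw.thm} at $t=k\|A\|_{\mathcal{B}(\ell^2(\GG))}^{-1}$ to obtain the summable tail, and the same identification of the $\mathcal{A}$-limit with the classical solution via the embedding (P2). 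Your version of the tail decay $e^{-2\alpha qk/\|A\|_{\mathcal{B}(\ell^2(\GG))}}$ even matches the theorem statement more precisely than the paper's own intermediate display, which writes $\|A\|_{\mathcal A}^{-1}$ there.
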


\begin{proof}
For an exponentially stable matrix $A\in {\mathcal A}$, let us consider $P_0$ given by \eqref{lyapunov.thm.pf.eq1}. Then $P_0\in {\mathcal A}$ and its $q$-norm can be  bounded from above as follows:
\begin{eqnarray}
 \|P_0\|_{\mathcal A}^q  & \le  & \sum_{m,n=0}^\infty \hspace{-.1cm}
\left(\frac{\|A\|_{{\mathcal B}(\ell^2)}^{-m-n-1}}{(m+n+1)\hspace{0.05cm} m! \hspace{0.05cm} n!} \right)^q \big \|(A^*)^m \hspace{0.05cm} Q \hspace{0.05cm} A^n \big\|_{\mathcal A}^q\nonumber\\
&  \le & \frac{1}{\|A\|_{{\mathcal B}(\ell^2)}^q}  \sum_{m,n=0}^\infty
\left(\frac{\|A\|_{{{\mathcal B}(\ell^2)}}^{-m-n}}{ m!\hspace{0.05cm}n!}\right)^q   K_0^{q(m+n)} \|A^*\|_{{\mathcal A}}^{qm}
\|A\|_{\mathcal A}^{qn} \|Q\|_{\mathcal A}^q\nonumber\\
&  = & \frac{ \|Q\|_{\mathcal A}^q}{\|A\|_{{\mathcal B}(\ell^2)}^q}~
\Xi_{0}^{2} ~ < ~\infty.\label{lyapunov.thm.pf.eq2}
\end{eqnarray}
Since ${\mathcal A}$ is a subalgebra of ${\mathcal B}(\ell^2(\GG))$ by \eqref{wiener.thm.eq4},
$P_0$ is a  bounded operator on $\ell^2(\GG)$. Moreover, it follows that
\begin{eqnarray}
P_0 & = & \int_0^{\|A\|_{\mathcal{B}(\ell^2)}^{-1}}
 \sum_{m,n=0}^\infty
  \frac{\left(t A^* \right)^m Q \left(tA \right)^n}{m! \hspace{0.05cm} n!}~  dt \nonumber \\
& = & \int_0^{\|A\|_{\mathcal{B}(\ell^2)}^{-1}} e^{tA^*} Q  e^{tA}
~dt  \quad {\rm in} \quad {\mathcal B}(\ell^2(\GG)).\label{lyapunov.thm.pf.eq3}
\end{eqnarray}
This implies that $P_0$ is strictly positive. Now, let us consider the  iterative process \eqref{lyapunov.thm.pf.eq5} for all $m\ge 1$  with initial value $P_0$. By induction,
\begin{equation}  \label{lyapunov.thm.pf.eq6}
P_m ~=~ \sum_{k=0}^m ~e^{k A^* \|A\|_{{\mathcal B}(\ell^2)}^{-1}} ~P_0 ~e^{ k A\|A\|_{{\mathcal B}(\ell^2)}^{-1}},
\end{equation}
for all $m\ge 0$. From  \eqref{lyapunov.thm.pf.eq6} and the result of Theorem  \ref{exponentialstablesqw.thm}, we can obtain the following inequalities:
 \begin{eqnarray}
\big\|P_{m+1}-P_m \big\|_{\mathcal A}^q &\le& K_0^{2q}  M^{2q}  \Xi_{0}^2 e^{2qE} \|P_0\|_{\mathcal A}^q
\left(2DE^{q\theta}
 \right)^{2 \omega_m} e^{-2 q\alpha m\|A\|_{\mathcal A}^{-1}}\end{eqnarray}
for all $m\ge 1$.
This  implies that
$P_m \in {\mathcal A}$ for all $m\ge 0$ and that the sequence  $\big\{ P_m \big\}_{m \geq 0}$ converges in ${\mathcal A}$. We denote by $P_\infty$  the limit of sequence $\big\{ P_m \big\}_{m \geq 0}$ as $m$ tends to infinity, i.e.,
\begin{equation*} P_\infty=\lim_{m\to \infty} P_m\in {\mathcal A}.\end{equation*}
From \eqref{lyapunov.thm.pf.eq3}
and \eqref{lyapunov.thm.pf.eq6}, it follows that
\[P_m=\int_0^{(m+1)\|A\|_{{\mathcal B}(\ell^2)}^{-1}} e^{tA^*} Q \hspace{0.05cm} e^{tA} ~dt
\quad {\rm in} ~~~ {\mathcal B}(\ell^2(\GG))\]
 for all $m\ge 0$. This together with the exponential stability  property \eqref{es.def} implies that the series $P_m$ for all $m\ge 1$ converges to $\int_0^\infty e^{tA^*} Q e^{tA} dt$ in ${\mathcal B}(\ell^2(\GG))$.
 Thus,  it follows that
\[\int_0^\infty e^{tA^*} Q e^{tA} dt=P_\infty\in {\mathcal A}.\]
Therefore, the unique solution $P_{\infty}$ of the Lyapunov equation \eqref{lyapunov.equation}
belongs to ${\mathcal A}$.
\end{proof}

The upper bound in \eqref{bound-lyap} gives an estimate to what degree the unique solution of the Lyapunov equation \eqref{lyapunov.equation} can be  sparsified and spatially localized. The result of this theorem can be particularly used to study the problem of disturbance propagation is spatially decaying system. It turns out that this problem boils down to evaluation of the corresponding controllability Gramian, which is the solution of a Lyapunov equation. The recent work \cite{Bamieh12} studies this problem for the class of spatially invariant systems over tori.

\section{Riccati Equations over $q$-Banach Algebras}\label{sec:riccati}
The main result of this paper is stated in this section where we consider solving algebraic Riccati equations related to linear-quadratic regulator (LQR) problems over $q$-Banach algebras. The following result is well-known and classic \cite{curtain-book, Bensoussan-Mitter93}. Suppose that $A, B, Q, R \in {\mathcal B}(\ell^2(\GG))$ and operators $Q$ and $R$ are positive and strictly positive on $\ell^{2}(\GG)$, respectively. If $(A,B)$ is exponentially stabilizable and $(A,Q^{1/2})$ is exponentially detectable, then the Riccati equation
\begin{equation}\label{ARE}
A^{*}X+XA-XBR^{-1}B^{*}X+Q=0
\end{equation}
has a unique strictly  positive  solution $X\in {\mathcal B}(\ell^2(\GG))$. Furthermore, the closed-loop matrix  $A_{X}=A+K$ 
is exponentially stable in ${\mathcal B}(\ell^2(\GG))$, where $K$ is  the LQR state feedback matrix given by
\begin{equation}
K=-R^{-1}B^{*}X.  \label{LQR-gain}
\end{equation}

For the closed-loop matrix $A_X=A-BR^{-1}B^{*}X$ there exist positive constants $E$ and $\alpha$ such that
\begin{equation}\label{riccati.thm.pf.eq1}
\big\|e^{t A_X}\big\|_{{\mathcal B}(\ell^2(\GG))}~\le~ E \hspace{0.05cm}e^{-\alpha t}
\end{equation}
for all $t\ge 0$. Let  $\Omega$ be the rectangle region in the complex plane with vertices
\begin{eqnarray}
 -\frac{\alpha}{2}&\pm& 2 \hspace{0.05cm} \|A_X\|_{{\mathcal B}(\ell^2(\GG))} \hspace{0.05cm} i,\label{rectan-region-1}\\
-2\hspace{0.05cm} \|A_X\|_{{\mathcal B}(\ell^2(\GG)))}&\pm& 2 \hspace{0.05cm} \|A_X\|_{{\mathcal B}(\ell^2(\GG)))} \hspace{0.05cm}i. \label{rectan-region-2}
\end{eqnarray}
The boundary of the rectangle region $\Omega$ is denoted by $\Gamma$. The following two lemmas are useful in the proof of the main results in Theorems \ref{riccati.thm} and \ref{thm-riccati-relaxed}.

\begin{lem}
Suppose that $A_{X}$ is exponentially stable in ${\mathcal B}(\ell^2(\GG))$ and \eqref{riccati.thm.pf.eq1} holds for some positive constants $E$ and $\alpha$. Then,
\begin{equation}  \label{stability-1}
\left\|(zI-A_X)^{-1}\right\|_{{\mathcal B}(\ell^2(\GG)))}~\le ~  \frac{2E}{\alpha}
\end{equation}
and
\begin{equation}  \label{stability-2}
\left\|(zI+A_X)^{-1}\right\|_{{\mathcal B}(\ell^2(\GG)))}~\le ~  \frac{E}{\alpha}
\end{equation}
for all $z\in \Gamma$, where $\Gamma$ is the boundary of the rectangle region $\Omega$ in the complex plane with vertices \eqref{rectan-region-1}-\eqref{rectan-region-2}.
\end{lem}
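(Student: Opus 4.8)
The plan is to bound both resolvents by combining two elementary representations of $(zI-A_X)^{-1}$ whose regions of validity together cover all of $\Gamma$: the Laplace (integral) representation of the resolvent, valid for $z$ to the right of $-\alpha$, and the Neumann series, valid outside the disc of radius $\|A_X\|_{\mathcal{B}(\ell^2(\GG))}$. First I would record the integral representation. Since $A_X$ is a bounded operator, $e^{tA_X}$ is the $C_0$-semigroup it generates, and for every $z$ with $\mathrm{Re}\,z>-\alpha$ the integral $\int_0^\infty e^{-zt}e^{tA_X}\,dt$ converges in $\mathcal{B}(\ell^2(\GG))$ because $\|e^{-zt}e^{tA_X}\|_{\mathcal{B}(\ell^2(\GG))}\le Ee^{-(\mathrm{Re}\,z+\alpha)t}$ by \eqref{riccati.thm.pf.eq1}. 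An integration by parts, using $\frac{d}{dt}e^{tA_X}=A_Xe^{tA_X}$ and the vanishing of the boundary term at $t=\infty$, identifies this integral with $(zI-A_X)^{-1}$ and yields the bound $\|(zI-A_X)^{-1}\|_{\mathcal{B}(\ell^2(\GG))}\le E/(\mathrm{Re}\,z+\alpha)$. Next I record the Neumann bound: for $|z|>\|A_X\|_{\mathcal{B}(\ell^2(\GG))}$ the series $\sum_{n\ge 0}z^{-n-1}A_X^n$ converges to $(zI-A_X)^{-1}$, giving $\|(zI-A_X)^{-1}\|_{\mathcal{B}(\ell^2(\GG))}\le(|z|-\|A_X\|_{\mathcal{B}(\ell^2(\GG))})^{-1}$. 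Finally I record the scalar estimate that glues the two together: exactly as in the proof of Theorem \ref{exponentialstablesqw.thm}, the identity $A_X^{-1}=-\int_0^\infty e^{tA_X}\,dt$ in $\mathcal{B}(\ell^2(\GG))$ together with \eqref{riccati.thm.pf.eq1} gives $\|A_X^{-1}\|_{\mathcal{B}(\ell^2(\GG))}\le E/\alpha$, and hence $\|A_X\|_{\mathcal{B}(\ell^2(\GG))}^{-1}\le\|A_X^{-1}\|_{\mathcal{B}(\ell^2(\GG))}\le E/\alpha$.

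For \eqref{stability-1} I would split $\Gamma$ into its right edge $\mathrm{Re}\,z=-\alpha/2$ and its three remaining edges. On the right edge $\mathrm{Re}\,z+\alpha=\alpha/2$, so the integral bound returns exactly $2E/\alpha$. On the top, bottom and left edges one has $|z|\ge 2\|A_X\|_{\mathcal{B}(\ell^2(\GG))}$, since there either $|\mathrm{Im}\,z|=2\|A_X\|_{\mathcal{B}(\ell^2(\GG))}$ or $\mathrm{Re}\,z=-2\|A_X\|_{\mathcal{B}(\ell^2(\GG))}$; hence the Neumann bound gives at most $\|A_X\|_{\mathcal{B}(\ell^2(\GG))}^{-1}$, which the scalar estimate dominates by $E/\alpha\le 2E/\alpha$. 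Taking the larger of the two constants proves \eqref{stability-1} on all of $\Gamma$.

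For \eqref{stability-2} I would use $(zI+A_X)^{-1}=-((-z)I-A_X)^{-1}$. For $z\in\Gamma$ we have $\mathrm{Re}\,(-z)=-\mathrm{Re}\,z\ge\alpha/2>0$, so $-z$ lies strictly to the right of $-\alpha$ and the integral bound applies with $w=-z$, giving $\|(zI+A_X)^{-1}\|_{\mathcal{B}(\ell^2(\GG))}\le E/(\mathrm{Re}\,(-z)+\alpha)\le E/\alpha$, where the last step only uses $\mathrm{Re}\,(-z)\ge 0$. Thus a single integral estimate covers the whole of $\Gamma$ in this case.

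The only genuinely analytic point is establishing the Laplace representation of the resolvent together with its exact domain of convergence $\mathrm{Re}\,z>-\alpha$; everything else is bookkeeping. The geometry of $\Omega$ is engineered so that this bookkeeping closes: the right edge is placed at $-\alpha/2$ precisely so the integral bound returns $2E/\alpha$, while the factor $2$ in the other three vertices guarantees $|z|\ge 2\|A_X\|_{\mathcal{B}(\ell^2(\GG))}$ and hence a convergent Neumann series there. The main care needed is to verify that these two regions genuinely cover every point of $\Gamma$ and that both resulting constants are controlled by the claimed bounds, which is exactly what the chain $\|A_X\|_{\mathcal{B}(\ell^2(\GG))}^{-1}\le\|A_X^{-1}\|_{\mathcal{B}(\ell^2(\GG))}\le E/\alpha$ secures.
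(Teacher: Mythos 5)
Your proposal is correct and follows essentially the same route as the paper's proof: the same splitting of $\Gamma$ into the right edge $\mathrm{Re}\,z=-\alpha/2$ (handled by the Laplace representation of the resolvent, giving $2E/\alpha$) and the remaining edges where $|z|\ge 2\|A_X\|_{{\mathcal B}(\ell^2(\GG))}$ (handled by the Neumann series together with the chain $\|A_X\|_{{\mathcal B}(\ell^2(\GG))}^{-1}\le\|A_X^{-1}\|_{{\mathcal B}(\ell^2(\GG))}\le E/\alpha$). In fact, your writeup supplies exactly the mechanisms that the paper's terse proof leaves implicit, and your reflection argument $z\mapsto -z$ for \eqref{stability-2} is the same idea the paper invokes as ``a similar argument.''
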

\begin{proof}
Let us consider the rectangle region in the complex plane with vertices \eqref{rectan-region-1}-\eqref{rectan-region-2} and boundary $\Gamma$.
 For every $z\in \Gamma$ with ${\rm Re} \{ z\}\ne -\frac{\alpha}{2}$, we have that \[|z|~\ge~ 2\|A_X\|_{{\mathcal B}(\ell^2(\GG)))}.\]
 This implies that
 $z I-A_X$ is invertible and
\begin{equation}
\left\|(zI-A_X)^{-1}\right\|_{{\mathcal B}(\ell^2(\GG)))}  ~\le~  \frac{E}{\alpha}.\label{ineq-1}
\end{equation}
For every $z\in \Gamma$ with ${\rm Re} \{z\}=-\frac{\alpha}{2}$, it follows that
\begin{equation}
 \left\|(zI-A_X)^{-1}\right\|_{{\mathcal B}(\ell^2(\GG)))}
~\le~   \frac{2E}{\alpha}. \label{ineq-2}
 \end{equation}
By combining inequalities \eqref{ineq-1} and \eqref{ineq-2}, we get
 \begin{equation}  \label{ineq-3}
\left\|(zI-A_X)^{-1}\right\|_{{\mathcal B}(\ell^2(\GG)))}~\le ~  \frac{2E}{\alpha}
 \end{equation}
for all $z\in \Gamma$. Through a similar argument, we can show that $z I+A_X^*$  is invertible for all $z\in \Gamma$ and
 \begin{equation}
 \left\|(z I + A_{X}^*)^{-1}\right\|_{{\mathcal B}(\ell^2(\GG)))} ~\le~     \frac{E}{\alpha}\label{ineq-4}
\end{equation}
for all $z\in \Gamma$.
\end{proof}

\begin{lem}\label{lem-hamiltonian}
For $ 0 < q \leq 1$, suppose that ${\mathcal A}$ is a proper $q$-Banach subalgebra of ${\mathcal B}(\ell^2(\GG))$, $A, B, Q, R \in {\mathcal A}$, and   $Q$ and $R$ are positive and strictly positive on $\ell^2(\GG)$, respectively. Assume that $X$ is the solution of \eqref{ARE} and the corresponding closed-loop matrix  $A_{X}$ is exponentially stable. Let us define the Hamiltonian operator of the Riccati equation \eqref{ARE} by
\begin{equation}  \label{riccati.thm.pf.eq6}
{\bf H} = \left[\begin{array}{cc}A & -BR^{-1}B^{*} \\-Q & -A^{*}\end{array}\right].
\end{equation}
If we define
\begin{equation} \label{riccati.thm.pf.eq9}
{\bf E}~:=~ \frac{1}{2 \pi i}~\int_{\Gamma}~ \big(z {\bf I}_2 - {\bf H}\big)^{-1} ~dz,
\end{equation}
then
\begin{equation}\label{em2a.property}
\mathbf{E} \in \mathcal{M}_2({\mathcal A}),\end{equation}
where $\mathcal{M}_2({\mathcal A})$ is the algebra of $2 \times 2$  matrices with entries in ${\mathcal A}$ and is defined in Appendix \ref{appendix-block-matrix}. Moreover, we have
\begin{equation}
{\bf E} ~=~
\left[\begin{array}{cc}I-ZX & Z \\X(I-ZX) & XZ\end{array}\right] \label{Eformula}
\end{equation}
for some operator $Z\in {\mathcal B}(\ell^2(\GG))$.
\end{lem}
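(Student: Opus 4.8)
The plan is to read $\mathbf{E}$ as the Riesz spectral projection of the Hamiltonian $\mathbf{H}$ and to separate the two assertions: the explicit block formula \eqref{Eformula}, which is an algebraic identity carried out entirely in the Banach algebra $\mathcal{B}(\ell^2(\GG)\oplus\ell^2(\GG))$, and the membership \eqref{em2a.property}, which is where the genuinely new $q$-Banach difficulty sits. First I would record the structural facts about the block algebra $\mathcal{M}_2(\mathcal{A})$ from Appendix \ref{appendix-block-matrix}: it is itself a proper $q$-Banach subalgebra of $\mathcal{M}_2(\mathcal{B}(\ell^2(\GG)))=\mathcal{B}(\ell^2(\GG)\oplus\ell^2(\GG))$, so Theorems \ref{wiener.thm} and \ref{thm-spectral} apply to it. Since $A_X$ is exponentially stable, the preceding lemma shows that $\Gamma$ lies in the resolvent set of $\mathbf{H}$ in $\mathcal{B}(\ell^2(\GG)\oplus\ell^2(\GG))$; spectral invariance (Theorem \ref{thm-spectral}) then puts $\Gamma$ in the resolvent set relative to $\mathcal{M}_2(\mathcal{A})$, and inverse-closedness (Theorem \ref{wiener.thm}) gives $(z\mathbf{I}_2-\mathbf{H})^{-1}\in\mathcal{M}_2(\mathcal{A})$ for each $z\in\Gamma$.

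For the formula \eqref{Eformula} I would introduce the stabilizing similarity $T=\begin{pmatrix} I & 0 \\ X & I\end{pmatrix}$, with $T^{-1}=\begin{pmatrix} I & 0 \\ -X & I\end{pmatrix}$. A direct computation using the Riccati identity \eqref{ARE} and the self-adjointness of $X$ gives the block-triangularization $T^{-1}\mathbf{H}T=\begin{pmatrix} A_X & -BR^{-1}B^* \\ 0 & -A_X^*\end{pmatrix}$. Because $A_X$ is exponentially stable, $\sigma(A_X)$ lies strictly inside the rectangle $\Omega$ with vertices \eqref{rectan-region-1}--\eqref{rectan-region-2}, whereas $\sigma(-A_X^*)=-\overline{\sigma(A_X)}$ sits in the open right half-plane, hence outside $\Gamma$. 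Conjugating the integral \eqref{riccati.thm.pf.eq9} by $T$ and integrating the block upper-triangular resolvent term by term (this is all inside $\mathcal{B}(\ell^2(\GG)\oplus\ell^2(\GG))$) yields $T^{-1}\mathbf{E}T=\begin{pmatrix} I & Z \\ 0 & 0\end{pmatrix}$: the $(1,1)$ block integrates to $I$ since all of $\sigma(A_X)$ is enclosed, the $(2,2)$ block to $0$, and the off-diagonal block is $Z=\frac{1}{2\pi i}\int_\Gamma (zI-A_X)^{-1}(-BR^{-1}B^*)(zI+A_X^*)^{-1}\,dz\in\mathcal{B}(\ell^2(\GG))$. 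Transforming back, $\mathbf{E}=T(T^{-1}\mathbf{E}T)T^{-1}$ is exactly \eqref{Eformula}. I would also note that $Z$ is the unique solution in $\mathcal{B}(\ell^2(\GG))$ of the Lyapunov equation $A_XZ+ZA_X^*+BR^{-1}B^*=0$, which will be convenient for later estimates.

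The remaining and main task is \eqref{em2a.property}. The naive approach, namely reading \eqref{riccati.thm.pf.eq9} as an $\mathcal{M}_2(\mathcal{A})$-valued integral of the (pointwise $\mathcal{M}_2(\mathcal{A})$-valued) map $z\mapsto(z\mathbf{I}_2-\mathbf{H})^{-1}$, fails for $0<q<1$: since $\mathcal{M}_2(\mathcal{A})$ is not locally convex, Riemann sums of a continuous path need not converge, because refining a subinterval of length $h$ into $N$ pieces contributes a factor $\sum_j\ell_j^q\sim h^qN^{1-q}\to\infty$. This is the very obstruction that forced the discrete, series-based construction in the proof of Theorem \ref{lyapunov.thm}, and I would resolve \eqref{em2a.property} in the same spirit by replacing the continuous functional calculus with a convergent discrete one. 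Concretely, I would approximate the idempotent-generating holomorphic symbol $f$ (equal to $1$ near the enclosed part of $\sigma(\mathbf{H})$ and $0$ near the rest) uniformly on $\sigma(\mathbf{H})$ by rational functions $p_n$ with poles off $\sigma(\mathbf{H})$, so that $p_n(\mathbf{H})\to\mathbf{E}$ in $\mathcal{B}(\ell^2(\GG)\oplus\ell^2(\GG))$ while every $p_n(\mathbf{H})\in\mathcal{M}_2(\mathcal{A})$ (using inverse-closedness to absorb the poles). The crux is to upgrade this to convergence in the $q$-norm: here I would invoke property (P3), the differential-norm inequality of Definition \ref{def-diff}, together with the quantitative decay estimates of Theorem \ref{exponentialstablesqw.thm}, to bound $\|p_n(\mathbf{H})-p_m(\mathbf{H})\|_{\mathcal{M}_2(\mathcal{A})}$ by the uniform approximation error of the symbols, showing $\{p_n(\mathbf{H})\}$ is $q$-Cauchy; completeness of $\mathcal{M}_2(\mathcal{A})$ then places the limit $\mathbf{E}$ in $\mathcal{M}_2(\mathcal{A})$.

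I expect the hardest step to be precisely this $q$-norm convergence in \eqref{em2a.property}: the block identity \eqref{Eformula} is classical Banach-space spectral theory, but converting the Riesz integral into something that converges in the non-locally-convex algebra $\mathcal{M}_2(\mathcal{A})$ is the new phenomenon, and it is exactly where the differential-norm property and the sub-exponential growth control of Theorem \ref{exponentialstablesqw.thm} are indispensable. A secondary subtlety is that \eqref{em2a.property} must be proved \emph{without} assuming $X\in\mathcal{A}$: that membership is the payoff of Theorem \ref{riccati.thm}, to be extracted afterwards from \eqref{Eformula} and \eqref{em2a.property} by inverse-closedness. In particular the approximation argument has to run on $\mathbf{H}$ itself, whose blocks $A,B,Q,R$ and $R^{-1}$ already lie in $\mathcal{A}$, rather than on the $X$-dependent triangular form; the Lyapunov route through $A_XZ+ZA_X^*+BR^{-1}B^*=0$ and Theorem \ref{lyapunov.thm} would only become available once $A_X\in\mathcal{A}$ is known, so it is best kept for the estimates in Theorem \ref{riccati.thm} rather than for establishing \eqref{em2a.property} here.
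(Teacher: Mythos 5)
Your derivation of the block formula \eqref{Eformula} and of the pointwise membership $(z\mathbf{I}_2-\mathbf{H})^{-1}\in\mathcal{M}_2(\mathcal{A})$ for $z\in\Gamma$ matches the paper's proof: the similarity $\mathbf{T}=\left[\begin{smallmatrix} I & 0\\ X & I\end{smallmatrix}\right]$, the Riccati identity giving the triangular form with diagonal blocks $A_X$ and $-A_X^*$, the resolvent bounds of the preceding lemma, and Lemma \ref{propermatrix.lem} combined with Theorem \ref{wiener.thm}. Your diagnosis of the central difficulty is also the paper's: since $\mathcal{M}_2(\mathcal{A})$ is not locally convex for $0<q<1$, the contour integral \eqref{riccati.thm.pf.eq9} cannot be read as a limit of Riemann sums converging in the $q$-norm.

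However, your repair of that step has a genuine gap. You propose rational approximants $p_n$ of the idempotent symbol, uniformly close on $\sigma(\mathbf{H})$, and claim that property (P3) together with Theorem \ref{exponentialstablesqw.thm} bounds $\|p_n(\mathbf{H})-p_m(\mathbf{H})\|_{\mathcal{M}_2(\mathcal{A})}$ by the uniform error of the symbols, making the sequence $q$-Cauchy. No such mechanism exists. First, in a general ($q$-)Banach algebra the norm of $p(\mathbf{H})$ is not controlled by $\sup_{\sigma(\mathbf{H})}|p|$; the standard route to such control is a Cauchy-integral representation of $p_n(\mathbf{H})-p_m(\mathbf{H})$, i.e.\ exactly the $q$-norm contour integral you do not have, so the argument is circular. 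Second, the differential norm property controls $q$-norms of \emph{products}; it cannot convert operator-norm smallness of a \emph{difference} into $q$-norm smallness, even with uniformly bounded $q$-norms. A concrete obstruction in $\mathcal{S}_{q,1}(\ZZ)$ (trivial weight): let $T_n$ be the rank-one matrix with entries $n^{-1/q}$ at positions $(0,j)$, $j=1,\dots,n$; then $\|T_n\|_{\mathcal{B}(\ell^2(\ZZ))}=n^{1/2-1/q}\to 0$ while $\|T_n\|_{\mathcal{S}_{q,1}}=1$, and for disjointly supported $T_n,T_m$ one has $\|T_n-T_m\|_{\mathcal{S}_{q,1}}^q=2$; so operator-norm convergence plus $q$-norm boundedness yields no $q$-Cauchy subsequence whatsoever. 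Third, Theorem \ref{exponentialstablesqw.thm} concerns the semigroup $e^{tA}$ and plays no role in this lemma. The paper's actual mechanism is different and is the missing idea: by compactness, cover $\Gamma$ by finitely many balls $B(z_i,r_i)$ with $r_i=\frac{1}{2}\bigl(K_0^{1/q}\|(z_i\mathbf{I}_2-\mathbf{H})^{-1}\|_{\mathcal{M}_2(\mathcal{A})}\bigr)^{-1}$, and on each arc $\Gamma_i\subset\Gamma\cap B(z_i,r_i)$ expand the resolvent in its Neumann series about $z_i$. The contour integral then becomes a finite sum of series of the form $\sum_{n\ge 0}\bigl(\int_{\Gamma_i}(z_i-z)^n\,dz\bigr)\bigl((z_i\mathbf{I}_2-\mathbf{H})^{-1}\bigr)^{n+1}$, in which the only integrals left are scalar and each series converges geometrically (ratio $2^{-q}$) in $\mathcal{M}_2(\mathcal{A})$ by submultiplicativity alone. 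If you insist on casting this as ``discrete functional calculus by rational approximants,'' the approximants must be these partial Neumann sums---whose differences are tails of products with geometrically decaying factors---not generic uniform approximations of the symbol.
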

\begin{proof}
We denote the block identity matrix by
$${\bf I}_2= \left[\begin{array}{cc}I & 0 \\ 0 & I\end{array}\right].$$
Through direct computations one can show that
\begin{eqnarray}
& & \hspace{-1.8cm} z {\bf I}_2 -{\bf H} ~ = ~  {\bf T}~  \left[\begin{array}{cc} z I- A_{X} & BR^{-1}B^{*} \\0 & z I+ A_{X}^{*}\end{array}\right]  {\bf T}^{-1}\label{riccati.thm.pf.eq7}
\end{eqnarray}
where $z\in \Gamma$ and
\[ {\bf T}=\left[\begin{array}{cc}I & 0 \\X & I\end{array}\right].\]
The above  identity together with
\eqref{stability-1} and
\eqref{stability-2}
imply that
$z{\bf I}_2 -{\bf H}$ is invertible in ${\mathcal B}(\ell^2(\GG)))$ for all $z\in \Gamma$.
Therefore it follows from Theorem \ref{wiener.thm} and Lemma \ref{propermatrix.lem}
that
\begin{equation} \label{riccati.thm.pf.eq18}
(z {\bf I}_2-{\bf H})^{-1}\in \mathcal{M}_2({\mathcal A}) \end{equation}
for all $z\in \Gamma$. Let us define
\begin{equation} \label{riccati.thm.pf.eq9}
{\bf E}~:=~ \frac{1}{2 \pi i}~\int_{\Gamma}~ \big(z {\bf I}_2 - {\bf H}\big)^{-1} ~dz.
\end{equation}
For every given $z_0\in \Gamma$, we have
\begin{eqnarray}
\big(z{\bf I}_2-{\bf H}\big)^{-1}  & = &   \big(z_0{\bf I}_2-{\bf H}\big)^{-1}\left( {\bf I}_2-(z_0-z)(z_0{\bf I}_2-{\bf H})^{-1}\right)\nonumber\\
& = & 
\big(z_0 {\bf I}_2-{\bf H}\big)^{-1} \sum_{n=0}^\infty (z_0-z)^n \left((z_0 {\bf I}_2-{\bf H})^{-1} \right)^n, \label{riccati.thm.pf.eq19}
\end{eqnarray}
where the series converges in $\mathcal{M}_2({\mathcal A})$ whenever
\[K_{0}^{\frac{1}{q}}~|z-z_0|~\left\|(z_0 {\bf I}_2-{\bf H})^{-1} \right\|_{\mathcal{M}_2({\mathcal A})}~<~1.\]
This is because according to \eqref{riccati.thm.pf.eq17} 
\begin{eqnarray}
& & \hspace{-1.1cm} \left\|(z_0-z)^n \left((z_0 {\bf I}_2-{\bf H})^{-1}\right)^n \right\|_{\mathcal{M}_2({\mathcal A})}^q ~\le~ |z_0-z|^{qn}~  K_{0}^{n-1} ~\left\|(z_0 {\bf I}_2-{\bf H})^{-1}\right\|_{\mathcal{M}_2({\mathcal A})}^{nq}. \label{ineq-H-2}
\end{eqnarray}
Since $\Gamma$ is a compact set, there exists finitely many points $z_i$ for $i=1,\ldots,N$ along with the four vertices of the rectangle region
such that  the collection of balls $B(z_{i},r_{i})$ with centers $z_{i}$ with  radii \[r_{i}=\frac{1}{2}\left(K_{0}^{\frac{1}{q}}~\left\|(z_i {\bf I}_2-{\bf H})^{-1}\right\|_{\mathcal{M}_2({\mathcal A})} \right)^{-1},\]
for all $i=1,\ldots,N$, is a covering of the boundary $\Gamma$. Therefore, we can write
\begin{eqnarray} \label{riccati.thm.pf.eq20}
{\bf E} & = & \sum_{i=1}^N ~\int_{\Gamma_i} ~(z{\bf I}_2-{\bf H})^{-1}~ dz\nonumber\\
& = & \sum_{i=1}^N ~\big(z_i {\bf I}_2-{\bf H}\big)^{-1}~  \sum_{n=0}^\infty \left(\int_{\Gamma_i} (z_i-z)^n dz\right)  \Big((z_i {\bf I}_2-{\bf H})^{-1}\Big)^n,
\end{eqnarray}
where
\[\Gamma_i ~\subset~ \Gamma ~\cap
 B(z_i, r_{i}),\]
for all $i=1,\ldots,N$ and $\big\{\Gamma_i:~ i=1,\ldots,N \big\}$ are disjoint covering of the boundary $\Gamma$. According to
\eqref{riccati.thm.pf.eq7}, \eqref{riccati.thm.pf.eq9} and \eqref{ineq-H-2}, we conclude that
\begin{equation}\label{em2a.property} \mathbf{E} \in \mathcal{M}_2({\mathcal A}).\end{equation}

From identity \eqref{riccati.thm.pf.eq7}, it follows that
\begin{eqnarray}
& & \hspace{-0.7cm} {\bf E} ~=~ \frac{1}{2\pi i}~
\int_{\Gamma} ~{\bf T}  \left[\begin{array}{cc} (z I- A_{X})^{-1}  &
 -(zI-A_X)^{-1} BR^{-1}B^{*} (zI+A_X^*)^{-1} \\0 & (z I+ A_{X}^{*})^{-1}\end{array}\right]   {\bf T}^{-1} dz \label{riccati.thm.pf.eq10}
\end{eqnarray}
belongs to ${\mathcal B}(\ell^2(\GG)))$
by
\eqref{stability-1} and
\eqref{stability-2}.
Recall that
 $\Gamma$ is the boundary of a  rectangle  region $\Omega$ such that the spectrum of $A_X$ is contained in $
 \Omega$ and the closure of $\Omega$ is contained in the open left half plane.
 Applying functional calculus  to \eqref{riccati.thm.pf.eq10} leads to
\begin{equation}
 {\bf E} ~=~  {\bf T}
\left[\begin{array}{cc}I & Z \\0 & 0\end{array}\right] {\bf T}^{-1}=
\left[\begin{array}{cc}I-ZX & Z \\X(I-ZX) & XZ\end{array}\right] \nonumber
\end{equation}
for some operator $Z\in {\mathcal B}(\ell^2(\GG)))$.
\end{proof}

From the result of Lemma \ref{lem-hamiltonian}, we can immediately conclude that
\begin{equation}\label{hamiltonian-sol}
I-ZX, Z, X(I-ZX), XZ \in  {\mathcal A}.
\end{equation}

Our goal is to prove that $X \in \mathcal{A}$. As we discussed earlier, a $q$-Banach algebra for $0 < q <1$ is not a Banach space. In the following two theorems, we apply Lemma \ref{lem-hamiltonian} along with several technical assumptions and develop a constructive proof based on finite covering of compact sets to show that the unique solution of an algebraic Riccati equation over a $q$-Banach algebra belongs to that $q$-Banach algebra.

\begin{thm}\label{riccati.thm}
For $ 0 < q \leq 1$, suppose that ${\mathcal A}$ is a proper $q$-Banach subalgebra of ${\mathcal B}(\ell^2(\GG))$, $A, B, Q, R \in {\mathcal A}$, and   $Q$ and $R$ are positive and strictly positive on $\ell^2(\GG)$, respectively. If we assume that (i) $(A,B)$ is exponentially stabilizable  and $(A,Q^{1/2})$ is exponentially detectable; and (ii) the dual Riccati equation
\begin{equation}
AY+YA^{*}-YQY+BR^{-1}B^{*}=0\label{filter-ARE}
\end{equation}
has a self-adjoint solution $Y \in {\mathcal B}(\ell^2(\GG))$ such that $I+YX$ is invertible in ${\mathcal B}(\ell^2(\GG))$, 
then the unique positive definite solution of the Riccati equation satisfies $X\in {\mathcal A}$ and $A_{X}$ is exponentially stable on $\mathcal{A}$.
\end{thm}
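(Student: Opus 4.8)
The plan is to build everything on Lemma \ref{lem-hamiltonian}, which already places the Riesz projection $\mathbf{E}$ of the Hamiltonian $\mathbf{H}$ in $\mathcal{M}_2({\mathcal A})$ with the block form \eqref{Eformula}, so that by \eqref{hamiltonian-sol} the entries $I-ZX$, $Z$, $X(I-ZX)$, $XZ$ all lie in ${\mathcal A}$. The entire argument then reduces to showing that $I-ZX$ is invertible in ${\mathcal B}(\ell^2(\GG))$: once this is known, inverse-closedness (Theorem \ref{wiener.thm}) gives $(I-ZX)^{-1}\in{\mathcal A}$, and since $\mathcal A$ is an algebra, $X=\big(X(I-ZX)\big)(I-ZX)^{-1}$ is a product of two elements of $\mathcal A$, whence $X\in{\mathcal A}$.

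The heart of the matter is to identify $I-ZX$ with $(I+YX)^{-1}$, and this is exactly where hypothesis (ii) enters. First I would record that the range of $\mathbf{E}$ equals the graph subspace $V_1=\mathrm{Im}\begin{bmatrix}I\\X\end{bmatrix}$, which is immediate from \eqref{Eformula} (each column of $\mathbf{E}$ has the shape $\begin{bmatrix}a\\Xa\end{bmatrix}$, and $\mathbf{E}$ fixes $V_1$), and that $V_1$ is $\mathbf{H}$-invariant by the Riccati equation \eqref{ARE}. Next, writing $M:=BR^{-1}B^{*}$ and using the self-adjointness of $Y$, $Q$, $M$, a direct computation shows that the dual Riccati equation \eqref{filter-ARE} is precisely the condition for $V_2:=\mathrm{Im}\begin{bmatrix}-Y\\I\end{bmatrix}$ to be $\mathbf{H}$-invariant, with $\mathbf{H}\begin{bmatrix}-Y\\I\end{bmatrix}=\begin{bmatrix}-Y\\I\end{bmatrix}(QY-A^{*})$. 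The invertibility of $I+YX$ assumed in (ii) is equivalent, via a Schur-complement computation, to the invertibility of $\mathbf{S}:=\begin{bmatrix}I&-Y\\X&I\end{bmatrix}$, so $V_1\oplus V_2=\ell^2(\GG)\oplus\ell^2(\GG)$ and the bounded idempotent $\mathbf{P}:=\mathbf{S}\begin{bmatrix}I&0\\0&0\end{bmatrix}\mathbf{S}^{-1}$, the projection onto $V_1$ along $V_2$, is well defined.

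I would then prove $\mathbf{E}=\mathbf{P}$ by a soft commutation argument that sidesteps any spectral localization of the generator $QY-A^{*}$. Because $\mathbf{P}$ projects along two $\mathbf{H}$-invariant subspaces, it commutes with $\mathbf{H}$, hence with the resolvent, and therefore with the Riesz projection $\mathbf{E}=\frac{1}{2\pi i}\int_{\Gamma}(z\mathbf{I}_2-\mathbf{H})^{-1}\,dz$. Since $\mathbf{E}$ and $\mathbf{P}$ are idempotents with the common range $V_1$, one has $\mathbf{E}\mathbf{P}=\mathbf{P}$ and $\mathbf{P}\mathbf{E}=\mathbf{E}$, and combined with $\mathbf{E}\mathbf{P}=\mathbf{P}\mathbf{E}$ this forces $\mathbf{P}=\mathbf{E}\mathbf{P}=\mathbf{P}\mathbf{E}=\mathbf{E}$. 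Matching the $(1,1)$ blocks of \eqref{Eformula} and of $\mathbf{P}=\begin{bmatrix}(I+YX)^{-1}&(I+YX)^{-1}Y\\X(I+YX)^{-1}&X(I+YX)^{-1}Y\end{bmatrix}$ then yields $I-ZX=(I+YX)^{-1}$, which is invertible in ${\mathcal B}(\ell^2(\GG))$ with inverse $I+YX$, closing the reduction and giving $X\in{\mathcal A}$.

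For the exponential stability claim, since $R$ is strictly positive it is invertible in ${\mathcal B}(\ell^2(\GG))$, so $R^{-1}\in{\mathcal A}$ by Theorem \ref{wiener.thm}; with $B^{*}\in{\mathcal A}$ from property (P1) and $X\in{\mathcal A}$ just established, the closed-loop matrix $A_X=A-BR^{-1}B^{*}X$ lies in ${\mathcal A}$. As $A_X$ is exponentially stable in ${\mathcal B}(\ell^2(\GG))$ by the classical theory recalled before \eqref{riccati.thm.pf.eq1}, Theorem \ref{exponentialstablesqw.thm} upgrades this to exponential stability in ${\mathcal A}$. I expect the main obstacle to be the identity $I-ZX=(I+YX)^{-1}$: the delicate point is that $Y$ is assumed only to be \emph{some} self-adjoint solution of \eqref{filter-ARE} with $I+YX$ invertible, not necessarily the stabilizing one, so one cannot locate $\sigma(QY-A^{*})$ outside the contour $\Gamma$, and it is precisely the commutation argument $\mathbf{E}=\mathbf{P}$ that makes the identity valid irrespective of where that spectrum sits.
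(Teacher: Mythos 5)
Your proposal is correct, and it follows the paper's skeleton at both ends: Lemma \ref{lem-hamiltonian} and \eqref{hamiltonian-sol} to place $I-ZX$, $Z$, $X(I-ZX)$, $XZ$ in $\mathcal{A}$; reduction of the whole problem to the invertibility of $I-ZX$ in $\mathcal{B}(\ell^2(\GG))$; Theorem \ref{wiener.thm} plus the factorization $X=\bigl(X(I-ZX)\bigr)(I-ZX)^{-1}$ to get $X\in\mathcal{A}$; and Theorem \ref{exponentialstablesqw.thm} to upgrade exponential stability of $A_X$ from $\mathcal{B}(\ell^2(\GG))$ to $\mathcal{A}$. Where you genuinely diverge is in how hypothesis (ii) is converted into the identity $I-ZX=(I+YX)^{-1}$. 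The paper does this analytically: using $\mathbf{H}\mathbf{E}=\mathbf{E}\mathbf{H}$ it checks that $Z$ solves the closed-loop Lyapunov equation \eqref{ARE.Lyapunov.eq}, notes that exponential stability of $A_X$ makes the bounded solution of that equation unique, and then quotes \cite[Lemma 4.9]{curtain2006} to conclude $Z=Y(I+XY)^{-1}$, after which the identity follows from the standard manipulation $I-Y(I+XY)^{-1}X=(I+YX)^{-1}$. You do it geometrically: the Riccati equation \eqref{ARE} makes the graph subspace $V_1$ of $X$ invariant under $\mathbf{H}$, the dual Riccati equation \eqref{filter-ARE} makes $V_2$ invariant, invertibility of $I+YX$ yields the direct sum decomposition and the bounded projection $\mathbf{P}$ onto $V_1$ along $V_2$, and since $\mathbf{P}$ then commutes with $\mathbf{H}$, with the resolvent, and hence with $\mathbf{E}$, the relations $\mathbf{E}\mathbf{P}=\mathbf{P}$ and $\mathbf{P}\mathbf{E}=\mathbf{E}$ (each operator acts as the identity on their common range $V_1$; idempotency of $\mathbf{E}$ follows directly from \eqref{Eformula}) force $\mathbf{E}=\mathbf{P}$, and matching $(1,1)$ blocks gives the identity. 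Your closing remark pinpoints exactly why this detour is needed: since $Y$ is not assumed to be the stabilizing solution, one cannot identify $\mathbf{E}$ with the spectral projection onto $V_1$ by locating $\sigma(QY-A^{*})$ outside $\Gamma$, and the commutation argument removes that obstacle. The trade-off is clear: the paper's route is shorter and identifies $Z$ concretely as the unique solution of \eqref{ARE.Lyapunov.eq} (a representation reused in Theorem \ref{thm-riccati-relaxed}), but it leans on an external result of Curtain and Opmeer; your route is self-contained, requiring no Lyapunov uniqueness theory and no outside citation, at the cost of the extra invariant-subspace machinery.
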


\begin{proof} We have ${\bf H}{\bf E}={\bf E}{\bf H}$, where ${\bf H}$ and ${\bf E}$ are defined in the statement of Lemma \ref{lem-hamiltonian}. It is straightforward to verify that $Z$ in \eqref{hamiltonian-sol} satisfies the Lypunov equation
\begin{equation} \label{ARE.Lyapunov.eq} A_XZ+ZA_X^*+BR^{-1}B^{*}=0.\end{equation}
From our assumption (ii) and exponential stability property of $A_X$ in ${\mathcal B}(\ell^2)$, the unique solution of  the Lyapunov equation \eqref{ARE.Lyapunov.eq} can be represented as (see \cite[Lemma 4.9]{curtain2006} for more details)
\begin{equation}
Z=Y(I+XY)^{-1},\label{Z-Y}
\end{equation}
where $Y=Y^{*} \in \mathcal{B}(\ell^{2})$ is a solution of the dual Riccati equation \eqref{filter-ARE}.  From \eqref{Z-Y}, we have
\begin{eqnarray}
I-ZX& = & I-Y(I+XY)^{-1}X \nonumber\\
& = & I-YX(I+YX)^{-1}\nonumber\\
& = & (I+YX)^{-1} . \label{I+YX}
\end{eqnarray}
According to our assumption (ii) and \eqref{I+YX}, it follows that $I-ZX$ is invertible in $ {\mathcal B}(\ell^{2}(\GG))$. From \eqref{hamiltonian-sol} and the inverse-closedness property of the $q$-Banach algebra ${\mathcal A}$ given in Theorem \ref{wiener.thm}, we can conclude that $(I-ZX)^{-1}\in \mathcal {A}$, which together with $X(I-ZX)\in \mathcal {A}$ in \eqref{hamiltonian-sol} proves $X \in \mathcal{A}$. From the closure properties of $\mathcal{A}$, it also follows that
\[ A_{X}=A-BR^{-1}B^{*}X \in \mathcal{A}.\]
Since $A_{X}$ is exponentially stable on $\mathcal{B}(\ell^{2}(\GG))$, we can conclude from Theorem  \ref{exponentialstablesqw.thm} that it is also exponentially stable on $\mathcal{A}$.
\end{proof}

One can relax the second condition in Theorem \ref{riccati.thm} by assuming that the linear system is approximately controllable. The following definition is taken from \cite[Definition 4.1.17]{curtain-book}.

\begin{defn}\label{def-approx-cont}
The linear system \eqref{linear-sys-1} is approximately controllable if it is possible to steer from the origin to within an $\epsilon$--neighborhood from every point in $\ell^{2}(\GG)$ for every arbitrary  $\epsilon >0$, i.e., if  $\mathfrak{R}$ the reachable subspace of $(A,B)$ is dense in $\ell^{2}(\GG)$, where
\begin{equation*}
\mathfrak{R}:=\Big\{z \in \ell^{2}(\GG) ~\Big| ~\exists ~T > 0,~u \in L^{2}\big([0,T];\ell^{2}(\GG)\big)~ \textrm{s.t.}~ z=\mathfrak{B}^T u ~\Big\},
\end{equation*}
and the controllability operator of $(A,B)$ is defined by
\[\mathfrak{B}^T u:=\int_{0}^{T} e^{A(T-\tau)}Bu(\tau) d\tau ~~~\textrm{in}~~~\mathcal{B}(\ell^{2}(\GG)). \]
\end{defn}

\begin{thm}\label{thm-riccati-relaxed}
For $ 0 < q \leq 1$, suppose that ${\mathcal A}$ is a proper $q$-Banach subalgebra of ${\mathcal B}(\ell^2(\GG))$, $A, B, Q, R \in {\mathcal A}$, and   $Q$ and $R$ are positive and strictly positive on $\ell^2(\GG)$, respectively. If we assume that $(A,B)$ is approximately controllable and  $(A,Q^{1/2})$ is exponentially detectable,
then the unique positive definite solution of the Riccati equation satisfies $X\in {\mathcal A}$ and $A_{X}$ is exponentially stable on $\mathcal{A}$.
\end{thm}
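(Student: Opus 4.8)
The plan is to reduce Theorem~\ref{thm-riccati-relaxed} to Theorem~\ref{riccati.thm} by showing that the relaxed hypotheses—approximate controllability of $(A,B)$ together with exponential detectability of $(A,Q^{1/2})$—imply condition (ii) of Theorem~\ref{riccati.thm}, namely that the dual Riccati equation \eqref{filter-ARE} admits a self-adjoint solution $Y\in{\mathcal B}(\ell^2(\GG))$ for which $I+YX$ is invertible in ${\mathcal B}(\ell^2(\GG))$. Once this is in place, Theorem~\ref{riccati.thm} delivers both $X\in{\mathcal A}$ and the exponential stability of $A_X$ on ${\mathcal A}$ verbatim, so no fresh argument over the $q$-Banach algebra is needed; the entire burden falls on a classical operator-theoretic verification inside ${\mathcal B}(\ell^2(\GG))$.

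First I would recognize \eqref{filter-ARE} as the control algebraic Riccati equation \eqref{ARE} associated with the dual data $\tilde A=A^*$, $\tilde B=Q^{1/2}$, state weight $\tilde Q=BR^{-1}B^*\ge 0$, and control weight $\tilde R=I>0$; substituting these into \eqref{ARE} reproduces $AY+YA^*-YQY+BR^{-1}B^*=0$ exactly. To invoke the classical existence theory for this dual equation I would check its stabilizability and detectability hypotheses via duality: exponential stabilizability of $(\tilde A,\tilde B)=(A^*,Q^{1/2})$ is equivalent to exponential detectability of $(A,Q^{1/2})$, which is assumed, whereas exponential detectability of $(\tilde A,\tilde Q^{1/2})=(A^*,(BR^{-1}B^*)^{1/2})$ is equivalent to exponential stabilizability of $(A,(BR^{-1}B^*)^{1/2})$, which I would extract from approximate controllability of $(A,B)$ using that $(BR^{-1}B^*)^{1/2}$ and $B$ share the same range. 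The classical theory then furnishes a self-adjoint, positive semidefinite, stabilizing solution $Y\ge 0$ in ${\mathcal B}(\ell^2(\GG))$, and the same reachability hypothesis secures the exponential stabilizability of $(A,B)$ needed for condition (i), hence the existence of the stabilizing $X$ with $A_X$ exponentially stable in ${\mathcal B}(\ell^2(\GG))$.

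With $Y\ge 0$ produced, the invertibility of $I+YX$ follows from a clean spectral argument that uses positivity alone. Writing $Y=Y^{1/2}Y^{1/2}$, the operators $YX=Y^{1/2}(Y^{1/2}X)$ and $Y^{1/2}XY^{1/2}=(Y^{1/2}X)Y^{1/2}$ have the same spectrum apart possibly from the point $0$, and $Y^{1/2}XY^{1/2}\ge 0$ because $X$ is positive; therefore $\sigma_{{\mathcal B}(\ell^2(\GG))}(YX)\subseteq[0,\infty)$. In particular $-1\notin\sigma(YX)$, so $I+YX$ is boundedly invertible in ${\mathcal B}(\ell^2(\GG))$. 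This establishes condition (ii), and combined with the verification of condition (i) it allows Theorem~\ref{riccati.thm} to be applied, yielding $X\in{\mathcal A}$ and exponential stability of $A_X$ on ${\mathcal A}$.

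I expect the main obstacle to be the second step rather than the third: correctly setting up the control–filter duality and, in the genuinely infinite-dimensional setting, translating approximate controllability of $(A,B)$ into the exponential detectability of the dual pair $(A^*,(BR^{-1}B^*)^{1/2})$ so that the classical existence theorem for \eqref{filter-ARE} applies with a \emph{positive semidefinite} stabilizing solution, while also confirming that approximate controllability furnishes the stabilizability presupposed by the existence of $X$. By contrast, the passage from $Y\ge 0$ to the invertibility of $I+YX$ is automatic and requires no controllability beyond positivity of $X$ and $Y$.
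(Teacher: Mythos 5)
Your reduction to Theorem \ref{riccati.thm} founders on one claim: that approximate controllability of $(A,B)$ yields exponential stabilizability of $(A,B)$ (and, via duality, exponential detectability of $(A^*,(BR^{-1}B^*)^{1/2})$, which you need for the classical existence theory of the dual equation \eqref{filter-ARE}). This is a finite-dimensional instinct that fails in infinite dimensions. Concretely, take $\GG=\NN$, $A=\mathrm{diag}(1/n)_{n\ge 1}$, and let $B$ have a single nonzero, rapidly decaying column with all entries nonzero; then $A,B\in\mathcal{S}_{q,w}(\GG)$ and $(A,B)$ is approximately controllable (a Riesz-spectral operator with distinct eigenvalues and all modes excited), yet for every bounded $K$ the operator $A+BK$ is compact, so $0\in\sigma(A+BK)$ and $A+BK$ is never exponentially stable; the same essential-spectrum argument rules out stabilizability of $(A,(BR^{-1}B^*)^{1/2})$. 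Hence neither condition (i) of Theorem \ref{riccati.thm} nor the existence of a self-adjoint solution $Y$ of \eqref{filter-ARE} can be derived from the stated hypotheses. The paper itself flags exactly this issue: the remark following Theorem \ref{thm-riccati-relaxed} stresses that its proof ``does not require the existence of a solution of the corresponding filter Riccati equation,'' whereas your argument reintroduces precisely that requirement. (Your last step is sound: given self-adjoint $Y\ge 0$ and $X\ge 0$, the identity $\sigma(YX)\setminus\{0\}=\sigma(Y^{1/2}XY^{1/2})\setminus\{0\}\subseteq[0,\infty)$ does make $I+YX$ invertible. The failure is upstream, in producing $Y$ at all.)

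For contrast, the paper spends approximate controllability on a much weaker task. Taking the solution $X$ and exponential stability of $A_X$ in ${\mathcal B}(\ell^2(\GG))$ as given (these are the standing hypotheses of Lemma \ref{lem-hamiltonian}), it notes that state feedback does not alter the reachable subspace (\cite[Lemma 4.1.6]{curtain-book}), so $(A_X,B)$ is approximately controllable; the block $Z$ of the Riesz projection ${\bf E}\in\mathcal{M}_2({\mathcal A})$ equals the closed-loop Gramian $\int_0^\infty e^{tA_X}BR^{-1}B^*e^{tA_X^*}\,dt$, which is strictly positive precisely by this approximate controllability (\cite[Theorem 4.1.22]{curtain-book}). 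Since Lemma \ref{lem-hamiltonian} gives $Z,\,XZ\in{\mathcal A}$, inverse-closedness (Theorem \ref{wiener.thm}) yields $Z^{-1}\in{\mathcal A}$, and then $X=(XZ)Z^{-1}\in{\mathcal A}$, with exponential stability of $A_X$ on ${\mathcal A}$ following from Theorem \ref{exponentialstablesqw.thm}. In short, approximate controllability is used only to invert the Gramian $Z$ inside the algebra, never to manufacture stabilizability or a dual Riccati solution; that is the step your proposal would need to replace.
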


\begin{proof} According to \cite[Definition 4.1.17]{curtain-book} and Definition \ref{def-approx-cont}, we can also rewrite the reachable subspace of $(A,B)$ as
\begin{equation}
\mathfrak{R}=\bigcup_{T > 0} \mathcal{R}(\mathfrak{B}^T), \label{reachable-1}
\end{equation}
where $\mathcal{R}(.)$ is the range of an operator. From \cite[Lemma 4.1.6]{curtain-book}, it follows that
\begin{equation}
\mathcal{R}(\mathfrak{B}^T)=\mathcal{R}(\mathfrak{B}^T_{K})~~~\textrm{for all}~~~T > 0,\label{reachable-K}
\end{equation}
where $\mathfrak{B}^T_{K}$ is the controllability operator of $(A+BK,B)$. From \eqref{reachable-1} and \eqref{reachable-K}, we can conclude that the reachable subspace of $(A,B)$ is equal to $(A+BK,B)$ for any bounded feedback $K$. Therefore, $(A,B)$ is approximately controllable if and only if $(A+BK,B)$ is for any bounded feedback $K$. This implies that $(A_{X},B)$ is also approximately controllable. We have ${\bf H}{\bf E}={\bf E}{\bf H}$, where ${\bf H}$ and ${\bf E}$ are defined in the statement of Lemma \ref{lem-hamiltonian}. The matrix $Z$ in \eqref{hamiltonian-sol} satisfies the Lyapunov equation \eqref{ARE.Lyapunov.eq}. As $A_X$ is exponentially stable in  ${\mathcal B}(\ell^2)$, it follows that
\begin{eqnarray}
Z & = & -\int_0^\infty ~\frac{d}{dt} \left(e^{tA_X} Z e^{A_X^*} \right) dt \nonumber \\
& = &  -\int_0^\infty   e^{tA_X}~ \big(A_X Z+Z A_X^* \big)~ e^{t A_X^*} ~dt \nonumber \\
& = & \int_0^\infty ~e^{tA_X} ~BR^{-1}B^{*}~ e^{t A_X^*}~  dt~~~\textrm{in}~~~\mathcal{B}(\ell^{2}(\GG)).\nonumber
\end{eqnarray}

Since $(A_{X},B)$ is approximately controllable, according to \cite[Theorem 4.1.22]{curtain-book} we can conclude that $Z$ is the unique strictly positive solution of \eqref{ARE.Lyapunov.eq}. In the next step, we prove that $Z$ belongs to the $q$-Banach algebra ${\mathcal A}$. According to Lemma 9.2, we have $\mathbf{E} \in \mathcal{M}_2({\mathcal A})$.
From this result and \eqref{Eformula}, it immediately follows that
\begin{equation}\label{Z-XZ} Z,\ XZ\in  {\mathcal A}.\end{equation}
Since $Z$ is strictly positive, we get that $Z^{-1}\in {\mathcal A}$ by the inverse-closedness of the $q$-Banach algebra ${\mathcal A}$. This together with \eqref{Z-XZ} prove  that the solution $X$ of the algebraic Riccati equation \eqref{ARE} belongs to
${\mathcal A}$. Based on a similar argument in the proof of Theorem \ref{riccati.thm}, we can conclude that $A_{X}$ is exponentially stable on $\mathcal{A}$.
\end{proof}

We remark that the result of Theorem \ref{thm-riccati-relaxed} does not require the existence of a solution of the corresponding filter Riccati equation.

\begin{cor}\label{LQR-gain-cor}
Suppose that the assumptions of Theorem \ref{riccati.thm} or Theorem \ref{thm-riccati-relaxed} hold. Then, the LQR state feedback gain \eqref{LQR-gain} belongs to $\mathcal{A}$.
\end{cor}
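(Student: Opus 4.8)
The plan is to show that each of the three factors composing the gain lies in the $q$-Banach algebra $\mathcal{A}$ and then to invoke closure under multiplication. Recall from \eqref{LQR-gain} that the LQR state feedback gain has the explicit form $K=-R^{-1}B^{*}X$. Under the hypotheses of either Theorem \ref{riccati.thm} or Theorem \ref{thm-riccati-relaxed}, the unique positive definite solution $X$ of the Riccati equation \eqref{ARE} already satisfies $X\in\mathcal{A}$, so this factor is settled at once.

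Next I would handle $B^{*}$. Since $B\in\mathcal{A}$ by hypothesis and $\mathcal{A}$ is proper, property {\bf (P1)} in Definition \ref{def-proper} ensures that $\mathcal{A}$ is closed under the complex conjugate (adjoint) operation with $\|B^{*}\|_{\mathcal{A}}=\|B\|_{\mathcal{A}}$; hence $B^{*}\in\mathcal{A}$.

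The one step requiring genuine work is to place $R^{-1}$ in $\mathcal{A}$, and I expect this to be the main obstacle. Here I would argue as follows. By hypothesis $R\in\mathcal{A}$ and $R$ is strictly positive on $\ell^{2}(\GG)$; since $\mathcal{A}$ is continuously embedded in $\mathcal{B}(\ell^{2}(\GG))$ by {\bf (P2)}, $R$ is a strictly positive bounded operator and therefore boundedly invertible in $\mathcal{B}(\ell^{2}(\GG))$, i.e. $R^{-1}\in\mathcal{B}(\ell^{2}(\GG))$. The inverse-closedness of $\mathcal{A}$ established in Theorem \ref{wiener.thm} (applicable because $\mathcal{A}$ is a proper $q$-Banach subalgebra) then upgrades this to $R^{-1}\in\mathcal{A}$.

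Finally, since $\mathcal{A}$ is a $q$-Banach algebra, property (iv) of Definition \ref{def-q-banach} guarantees closure under products, so $R^{-1}B^{*}X\in\mathcal{A}$ and therefore $K=-R^{-1}B^{*}X\in\mathcal{A}$. Everything apart from the $R^{-1}$ step is an immediate application of the algebraic closure properties of $\mathcal{A}$, which is precisely why the inverse-closedness result of Theorem \ref{wiener.thm} is the decisive ingredient and why the properness assumptions on $\mathcal{A}$ cannot be dropped.
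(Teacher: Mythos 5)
Your proof is correct and follows essentially the same route as the paper's own (much terser) argument: factor $K=-R^{-1}B^{*}X$, get $X\in\mathcal{A}$ from Theorem \ref{riccati.thm} or \ref{thm-riccati-relaxed}, and use the closure of $\mathcal{A}$ under adjoints, inversion, and multiplication. Your version merely makes explicit what the paper leaves implicit, namely that "$\mathcal{A}$ is closed under taking inverse" for $R$ rests on strict positivity of $R$ giving bounded invertibility in $\mathcal{B}(\ell^{2}(\GG))$ plus the inverse-closedness of Theorem \ref{wiener.thm}.
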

\begin{proof}
According to Theorem \ref{riccati.thm} or Theorem \ref{thm-riccati-relaxed}, $X \in \mathcal{A}$. Since $\mathcal{A}$ is closed under matrix multiplication and taking inverse, it follows that $K=-R^{-1}B^{*}X \in \mathcal{A}$.
\end{proof}

\begin{rem} The proof of Theorem \ref{riccati.thm} is constructive which enables us to obtain an upper bound for the $q$-norm of $X$ over each covering set. By combining these upper bounds, one can only calculate a conservative upper bound for the $q$-norm of $X$. For this reason, we do not present this conservative upper bound.
\end{rem}

\begin{rem}
The conclusions in  Theorems \ref{lyapunov.thm} and \ref{riccati.thm} hold for $\mathcal{S}_{q,w}(\GG)$ for all  $0 < q \leq 1$. For $1 \leq q \leq \infty$, $\mathcal{S}_{q,w}(\GG)$ is a Banach algebra. Our proof does not depend on whether  the underlying space is a Banach space or not. Therefore, our proofs are still valid for all exponents $0 < q \leq \infty$.
\end{rem}

\begin{rem}
Our proposed methodology in this paper can be applied to analyze other optimal control design problems with quadratic performance criteria, such as $\mathcal{H}_{2}$ and $\mathcal{H}_{\infty}$ problems \cite{curtain-book}. The solutions of these optimal control problems usually involve solving two operator Riccati equations.
\end{rem}

\section{Fundamental Limits on the Best Achievable Degrees of Sparsity and Spatial Localization}\label{sec-VI}

The results of Theorem \ref{schur-cor} and Corollary \ref{LQR-gain-cor} imply that the LQR state feedback gain satisfies
\[ K \in \mathcal{S}_{q,w}(\GG) ~~~\textrm{for all}~~~ \ 0<q\le 1. \]
This result states that the LQR state feedback gain $K=[K_{ij}]_{i,j \in \GG}$ is spatially localized, i.e.,
\begin{equation}
|K_{ij}|~\le~C_{0}\hspace{0.03cm} w(i,j)^{-1},
\end{equation}
where $C_{0}=\|K\|_{\mathcal{S}_{q,w}(\GG)}$ is a finite number. This also asserts that the underlying information structure of the optimal control law is sparse in space and each local controller needs to receive information only from some neighboring subsystems. In the rest of this section in order to present our results in more explicit and sensible forms, we will limit our analysis to matrices that are defined on $\GG=\ZZ$ endowed with quasi-distance function $\rho(i,j)=|i-j|$.

In order to characterize a fundamental limits on interplay between stability margins and optimal performance loss in spatially distributed systems, we approximate the corresponding LQR feedback gain  $K$ by a sparse feedback gain $K^{\mathfrak{T}}$ for a given truncation length $\mathfrak{T} > 0$ as follows
\begin{equation}\label{K-trunc}
\big(K^{\mathfrak{T}} \big)_{ij} = \left\{
                        \begin{array}{ccc}
                          K_{ij} & \textrm{if} & |i-j| \leq \mathfrak{T} \\
                          0 & \textrm{if} & |i-j| > \mathfrak{T} \\
                        \end{array}
                      \right..
\end{equation}

\begin{thm}\label{thm-asymp-K}
For $0 < q \leq 1$, suppose that $K \in \mathcal{S}_{q,w}(\ZZ)$ and  $K^{\mathfrak{T}}$ is defined by \eqref{K-trunc}. Then,
\begin{equation}\label{trunc-bound}
\|K-K^{\mathfrak{T}}\|_{\mathcal{B}(\ell^{2}(\ZZ))}~\leq ~C_{0} \hspace{0.03cm} w(\mathfrak{T})^{-1}
\end{equation}
where $C_{0}=\|K\|_{\mathcal{S}_{q,w}(\ZZ)}$ and $w(\mathfrak{T})=\inf_{|i-j|>\mathfrak{T}} w(i,j)$.
\end{thm}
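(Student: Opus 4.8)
The plan is to bound the operator norm of the off-band remainder $M := K - K^{\mathfrak{T}}$, whose entries are $m_{ij}=K_{ij}$ for $|i-j|>\mathfrak{T}$ and $m_{ij}=0$ otherwise, by means of the Schur test. Recall that for any matrix $M=[m_{ij}]_{i,j\in\ZZ}$ one has
\[
\|M\|_{\mathcal{B}(\ell^2(\ZZ))} ~\le~ \Big(\sup_{i\in\ZZ}\sum_{j\in\ZZ}|m_{ij}|\Big)^{1/2}\Big(\sup_{j\in\ZZ}\sum_{i\in\ZZ}|m_{ij}|\Big)^{1/2},
\]
so it suffices to bound both the supremal row sum and the supremal column sum of $M$ by $C_{0}\hspace{0.03cm}w(\mathfrak{T})^{-1}$.

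First I would estimate a single row sum, and this is the only place where the restriction $0<q\le 1$ enters. Since $t\mapsto t^q$ is subadditive on $[0,\infty)$ for $0<q\le 1$, one has the embedding bound $\sum_{j}|a_j|\le\big(\sum_{j}|a_j|^q\big)^{1/q}$ for every nonnegative sequence $(a_j)$. Applying this to the surviving entries of the $i$-th row and then inserting the weight gives
\[
\sum_{j:\,|i-j|>\mathfrak{T}}|K_{ij}| ~\le~ \Big(\sum_{j:\,|i-j|>\mathfrak{T}}|K_{ij}|^q\Big)^{1/q} ~\le~ \Big(w(\mathfrak{T})^{-q}\sum_{j:\,|i-j|>\mathfrak{T}}|K_{ij}|^q\hspace{0.03cm}w(i,j)^q\Big)^{1/q},
\]
where the second inequality uses $w(i,j)\ge w(\mathfrak{T})$ for $|i-j|>\mathfrak{T}$, which is immediate from the definition $w(\mathfrak{T})=\inf_{|i-j|>\mathfrak{T}}w(i,j)$. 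Enlarging the range of summation to all $j\in\ZZ$ and recognizing the definition of the $\mathcal{S}_{q,w}$--measure bounds the right-hand side by $w(\mathfrak{T})^{-1}\big(\sum_{j\in\ZZ}|K_{ij}|^q w(i,j)^q\big)^{1/q}\le C_{0}\hspace{0.03cm}w(\mathfrak{T})^{-1}$, uniformly in $i$.

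The column-sum estimate is identical after exchanging the roles of $i$ and $j$, now invoking the symmetric second term in the definition \eqref{sqwnorm.def} of $\|K\|_{\mathcal{S}_{q,w}(\ZZ)}$ together with $w(i,j)=w(j,i)$. Feeding both bounds into the Schur estimate yields $\|M\|_{\mathcal{B}(\ell^2(\ZZ))}\le C_{0}\hspace{0.03cm}w(\mathfrak{T})^{-1}$, which is exactly \eqref{trunc-bound}. I do not anticipate a genuine obstacle: the argument is a clean truncation estimate whose one subtlety is that passing from the absolute ($\ell^1$) row sum to the weighted $\ell^q$ quantity controlled by the $\mathcal{S}_{q,w}$--measure must be routed through the subadditivity of $t\mapsto t^q$. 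This is precisely the feature that accommodates the non-Banach regime $q<1$ and that converts the off-diagonal decay encoded in $w$ into a quantitative operator-norm bound of order $w(\mathfrak{T})^{-1}$.
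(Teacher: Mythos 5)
Your proof is correct and takes essentially the same approach as the paper: a Schur test reduces the operator-norm bound to row and column sums, which are then controlled by the monotonicity $\|z\|_{1}\le\|z\|_{q}$ for $0<q\le 1$ together with the bound $w(i,j)\ge w(\mathfrak{T})$ on the off-band set. The only difference is cosmetic — you apply the $\ell^{q}$ embedding before inserting the weights, whereas the paper inserts $w(i,j)\,w(i,j)^{-1}$ first and applies the embedding to the weighted row sum.
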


\begin{proof}
We apply the Schur test to estimate the operator norm in $\mathcal{B}(\ell^2(\ZZ))$, i.e.,
\begin{eqnarray}
\hspace{-.7cm} \| K - K^{\mathfrak{T}} \|^2_{\mathcal{B}(\ell^2(\ZZ))} &\leq & \Big(\sup_{i \in \ZZ}~ \sum_{j \in \ZZ}~\big|K_{ij} - (K^{\mathfrak{T}})_{ij}\big| \Big)   \Big( \sup_{j \in \ZZ}~ \sum_{i \in \ZZ}~\big|K_{ij} - (K^{\mathfrak{T}})_{ij}\big|  \Big). \nonumber
\end{eqnarray}
First, let us consider the following term
\begin{eqnarray}
 \sup_{i\in \ZZ} ~\sum_{j\in \ZZ}~\big|K_{ij} - (K^{\mathfrak{T}})_{ij}\big| & = & \sup_{i\in \ZZ} \sum_{|i-j| > \mathfrak{T}}\big| K_{ij} \big|  \nonumber \\
&  = &  \sup_{i\in \ZZ}~ \sum_{|i-j| > \mathfrak{T}}\big|K_{ij}\big| w(i, j)  w(i, j)^{-1} \nonumber \\
&  \leq & \Big( \sup_{i \in \ZZ} \sum_{|i-j| > \mathfrak{T}} \big|K_{ij}\big| w(i, j) \Big)  w(\mathfrak{T})^{-1} \nonumber\\
&  \leq & C_{0} \hspace{0.03cm} w(\mathfrak{T})^{-1}.
\end{eqnarray}
The last inequality holds due to the monotonic inequality
\[ \|z\|_{1} \leq \|z\|_{q}\]
for all $z \in \ell^{q}(\ZZ)$ and $0 < q \leq 1$.   Likewise, by interchanging indices $i$ and $j$ a similar bound can be obtained. Therefore, it concludes that
\[  \| K - K^{\mathfrak{T}} \|_{\mathcal{B}(\ell^2(\ZZ))} ~\leq~ C_{0} ~ w(\mathfrak{T})^{-1}. \]
\end{proof}
For the class of sub-exponential coupling weight functions with parameters $\sigma >0$ and $\delta \in (0,1)$, the inequality \eqref{trunc-bound} becomes
\[  \| K - K^{\mathfrak{T}} \|_{\mathcal{B}(\ell^{2}(\ZZ))} ~\leq~C_{\textrm{e}}~ e^{-\left(\frac{\mathfrak{T}}{\sigma}\right)^{\delta}} ,\]
where $C_{\textrm{e}}=\|K\|_{\mathcal{S}_{q,e_{\sigma,\delta}}(\ZZ)}$.
Similarly, the error bound for the class polynomial coupling weight functions with parameters $\alpha, \sigma>0$ is
\[ \| K - K^{\mathfrak{T}} \|_{\mathcal{B}(\ell^{2}(\ZZ))} ~\leq~C_{\pi}\left(\frac{\mathfrak{T}}{\sigma}\right)^{-\alpha},\]
where $C_{\pi}=\|K\|_{\mathcal{S}_{q,\pi_{\alpha,\sigma}}(\ZZ)}$.

The inequality \eqref{trunc-bound} in Theorem \ref{thm-asymp-K} implies that for spatially decaying state feedback gains, the truncation tail $K-K^{\mathfrak{T}}$ can be made arbitrarily small as $\mathfrak{T}$ gets large. This property enables us to characterize stabilizing truncated state feedback gains by using small-gain stability argument.  Let us consider the truncated closed-loop system
\begin{equation}
\dot{x}=(A-BK^{\mathfrak{T}})x.
\end{equation}
One can decompose this system as two subsystems
\begin{eqnarray}
\dot{x}&=&(A-BK)x+w, \label{small-gain-closed-loop-1}\\
w&=&B(K-K^{\mathfrak{T}})x. \label{small-gain-closed-loop-2}
\end{eqnarray}
Since $K$ is the LQR feedback gain, the LQR closed-loop operator $A-BK$ in $\eqref{small-gain-closed-loop-1}$ is exponentially stable. If $B \in \mathcal{B}(\ell^{2}(\GG))$, we can apply small-gain theorem and characterize a fundamental limit in the form of a lower bound for stabilizing truncation lengths for different admissible coupling weight functions. For the class of sub-exponential coupling weight functions with parameters $\sigma > 0$ and $\delta \in (0,1)$, the truncated feedback gain $K^{\mathfrak{T}}$ is exponentially stabilizing if the truncation length $\mathfrak{T}$ satisfies the following inequality
\begin{equation}
\mathfrak{T} ~>~\mathfrak{T}_s, \label{stabilizing-truncation-length}
\end{equation}
where
\begin{equation}
\mathfrak{T}_s=\sigma \left(\log\Big(C_{\textrm{e}}\hspace{0.05cm} \|B\|_{\mathcal{B}(\ell^2(\ZZ))}\sup_{\mathrm{Re}(s) > 0} \sigma_{\max}(G(s)) \Big)\right)^{\frac{1}{\delta}} \label{stab-trunc-length}
\end{equation}
and $G(s)=(sI-(A-BK))^{-1}$ is the transfer function of the closed-loop system. The stabilizing truncation length for the class of polynomial coupling weight functions with parameters $\alpha, \sigma>0$ is given by
\begin{equation*}
\mathfrak{T}_{s}~=~\sigma \left( C_{\pi}\hspace{0.05cm} \|B\|_{\mathcal{B}(\ell^2(\ZZ))}\sup_{\mathrm{Re}(s) > 0} \sigma_{\max}(G(s))\right)^{\frac{1}{\alpha}}.
\end{equation*}

A fundamental tradeoff emerges between truncation length and performance loss. Let us consider the following quadratic cost functional
\begin{equation}
J(x_{0},K)=\int_{0}^{\infty} \left(x^{*}Q x + x^{*}K^{*}RKx  \right) dt,
\end{equation}
where $x_{0}$ is the initial condition of the linear system and $K$ an exponentially stabilizing state feedback gain. It is straightforward to show that
\begin{equation}\label{cost-X}
J(x_{0},K)=x_{0}^{*} X x_{0},
\end{equation}
in which $X$ is the unique strictly positive definite solution of the Lyapunov equation
\begin{equation}\label{lyap-closed-loop}
(A-BK)^{*}X+X(A-BK)+Q+K^{*}RK=0.
\end{equation}

We define the performance loss measure by the following quantity
\begin{equation}\label{perfor-loss}
\Pi_{K}(\mathfrak{T}, x_0)~=~J(x_0,K^{\mathfrak{T}})-J(x_0,K).
\end{equation}
One of the important remaining problems is to study the asymptotic behavior of the performance loss $\Pi_{K}(\mathfrak{T}, x_0)$ as $\mathfrak{T}$ tends to infinity. This problem is out of scope of this paper and will be addresses in our future works.

\section{Near-Optimal Degrees of Sparsity and  Spatial Localization}\label{sec-near-ideal-meas}
In the next step, our goal is to propose a method to compute a value for parameter $0 < q < 1$ such that $\mathcal{S}_{q,w}$--measure approximates $\mathcal{S}_{0,1}$--measure in probability. In this subsection in order to present our results in more explicit and sensible forms, we will limit our analysis to the class of sub-exponentially decaying matrices that are defined on $\GG=\ZZ$.

\begin{defn}
For a given truncation threshold $\epsilon >0$ and matrix $K$, the threshold  matrix of $K$ is denoted by
$K_\epsilon$ and defined by setting $(K_{\epsilon})_{ij}=0$ if $|K_{ij}| <\epsilon$ and  $(K_{\epsilon})_{ij}=K_{ij}$ otherwise.
\end{defn}

In order to present our results in more explicit and sensible forms, only in this section, we limit our focus to the class of  sub-exponentially decaying random matrices of the form
\begin{equation} 
\mathcal{R}_{\sigma, \delta}(\ZZ)= \left\{\left. K=\left[r_{ij}\hspace{0.05cm}\hspace{0.03cm} e^{-\left(\frac{|i-j|}{\sigma}\right)^{\delta}}\right]_{i,j\in \ZZ} \right|r_{ij} \sim \mathbf{U}(-1,1) \right\} \nonumber
\end{equation}
for some given parameters $\sigma>0$ and $ \delta \in (0,1)$.  The coefficients $r_{ij}$ are drawn from the continuous uniform distribution $\mathbf{U}(-1,1)$. It is assumed that the underlying spatial domain is $\ZZ$ and that the spatial distance between node $i$ and $j$ is measured by $|i-j|$. The corresponding admissible coupling weight function  for this class of spatially decaying matrices is given by
 \begin{equation}\label{onesubexponentialweight.def}
e_{\sigma', \delta}:=\big[e_{\sigma', \delta}(i,j) \big]_{i,j\in \ZZ}=\left[e^{\left(\frac{|i-j|}{\sigma'}\right)^\delta} \right]_{i,j\in \ZZ}\end{equation}
for some $\sigma' > \sigma$.
\vspace{0.1cm}
\begin{defn}
For a given truncation threshold $0 < \epsilon < 1$, the sparsity indicator function for the class of random sub-exponentially decaying matrices $\mathcal{R}_{\sigma, \delta}(\ZZ)$ is defined by
\begin{equation}
\Psi_{K, w}(q, \epsilon)~:=~ \frac{\|K\|_{\mathcal{S}_{q, w}}^{q} }{2 \hspace{0.05cm}\big \lfloor \sigma \sqrt[\delta]{\ln \epsilon^{-1}} \big \rfloor +1},\label{sparsity-indicator-fcn}
\end{equation}
where $\lfloor . \rfloor$ is the floor function.
\end{defn}

The sparsity indicator function provides a reasonable criterion for calculating proper values for exponent $q$ and parameters in the weight function in order to measure sparsity of matrices in $\mathcal{R}_{\sigma, \delta}(\ZZ)$ using $\mathcal{S}_{q, w}$--measure. This is simply because of the following inequality that shows that the value of the ${\mathcal{S}_{0,1}}$--measure of the threshold matrix of a matrix $K \in \mathcal{R}_{\sigma, \delta}(\ZZ)$ can be upper bounded by
\begin{equation}
\| K_{\epsilon}\|_{\mathcal{S}_{0,1}} ~\leq~2 \hspace{0.05cm}\big \lfloor \sigma \sqrt[\delta]{\ln \epsilon^{-1}} \big \rfloor +1.
\end{equation}


\begin{thm}\label{indicatorsubexponential.thm} Suppose that $w_0 \equiv 1$ is the trivial weight function and parameters $\sigma>0$ and $\delta\in (0,1)$ are given. Let us define parameter $\beta=q \ln \epsilon^{-1}$. For every $K \in \mathcal{R}_{\sigma, \delta}(\ZZ)$, the sparsity indicator function $\Psi_{K, w_0}(q, \epsilon)$ converges to
$\gamma:=\beta^{-s} \Gamma(s+1)$
in probability
as the truncation threshold $\epsilon$ tends to zero,
 i.e.,
\begin{equation}\label{subexponential.limit}
\lim_{\epsilon\to 0^{+}}~\mathbb{P}\Big\{~\big|\Psi_{K, w_0}(q, \epsilon)-\gamma \big| < \epsilon_0 ~\Big\}~=~1
\end{equation}
for every $\epsilon_0 >0$,
where $s=\delta^{-1}$ and $\Gamma(s):=\int_0^\infty t^{s-1} e^{-t} dt$ is the Gamma function.
\end{thm}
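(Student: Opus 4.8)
The plan is to reduce the measure $\|K\|_{\mathcal S_{q,w_0}}^q$ to a single random row (equivalently column) sum, and then to pair a deterministic integral asymptotic that produces the Gamma function with a second--moment estimate that produces the ``in probability''. Since $w_0\equiv1$, raising \eqref{sqwnorm.def} to the power $q$ gives
\begin{equation*}
\|K\|_{\mathcal S_{q,w_0}}^q=\max\Big\{\sup_{i\in\ZZ}S_i,\ \sup_{j\in\ZZ}\widetilde S_j\Big\},\qquad
S_i:=\sum_{k\in\ZZ}|r_{i,i+k}|^q\,e^{-q(|k|/\sigma)^\delta},
\end{equation*}
with $\widetilde S_j$ the analogous column sum. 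By the symmetry of the ensemble every $S_i,\widetilde S_j$ shares the law of $S:=S_0$, so I would first analyze $S$. With $r\sim\mathbf U(-1,1)$ one has $\mathbb E|r|^q=(q+1)^{-1}$ and $\mathbb E|r|^{2q}=(2q+1)^{-1}$, whence, writing $\Sigma_m:=\sum_{k\in\ZZ}e^{-mq(|k|/\sigma)^\delta}$,
\begin{equation*}
\mathbb E S=(q+1)^{-1}\Sigma_1,\qquad
\operatorname{Var}S=\big((2q+1)^{-1}-(q+1)^{-2}\big)\,\Sigma_2 .
\end{equation*}

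Next I would establish the deterministic asymptotics. Using $\ln\epsilon^{-1}=\beta/q$, the normalizing denominator obeys $2\lfloor\sigma\sqrt[\delta]{\ln\epsilon^{-1}}\rfloor+1\sim 2\sigma\,\beta^{s}q^{-s}$. For $\Sigma_m$ I would treat the series as a Riemann sum with mesh $h:=q^{1/\delta}/\sigma$: since $q(|k|/\sigma)^\delta=|hk|^\delta$, one has $h\,\Sigma_m=h\sum_k e^{-m|hk|^\delta}\to\int_{\RR}e^{-m|x|^\delta}\,dx=2m^{-s}\Gamma(s+1)$ as $h\to0$, the last identity by the substitution $u=m|x|^\delta$ together with $\Gamma(s+1)=s\Gamma(s)$. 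Hence $\Sigma_1\sim 2\sigma q^{-s}\Gamma(s+1)$ and $\Sigma_2\sim 2\sigma(2q)^{-s}\Gamma(s+1)$. These approximations require $h\to0$, i.e.\ $q\to0$; this is exactly the content of letting $\epsilon\to0^+$ while tracking the combined parameter $\beta=q\ln\epsilon^{-1}$, the regime in which $\gamma$ is a nontrivial limit.

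The probabilistic step is a weak law of large numbers via Chebyshev's inequality. Since $(2q+1)^{-1}-(q+1)^{-2}=q^2+O(q^3)$, the moment formulas give $\operatorname{Var}S=O(q^{2-s})$ while $(\mathbb E S)^2=\Theta(q^{-2s})$, so that $\operatorname{Var}S/(\mathbb E S)^2=O(q^{2+s})\to0$. Therefore $S/\mathbb E S\to1$ in probability, and because the mean factor satisfies $(q+1)^{-1}\to1$ we obtain $S\sim\Sigma_1\sim2\sigma q^{-s}\Gamma(s+1)$ in probability. Dividing by the denominator then yields
\begin{equation*}
\Psi_{K,w_0}(q,\epsilon)=\frac{S}{2\lfloor\sigma\sqrt[\delta]{\ln\epsilon^{-1}}\rfloor+1}
\ \longrightarrow\ \frac{2\sigma q^{-s}\Gamma(s+1)}{2\sigma\beta^{s}q^{-s}}=\beta^{-s}\Gamma(s+1)=\gamma
\end{equation*}
in probability, which is \eqref{subexponential.limit}.

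The step I expect to be the main obstacle is the passage from a single row sum $S$ to the supremum over the infinite index set $\ZZ$ that actually defines the measure. Over genuinely independent rows the supremum is, almost surely, the deterministic essential supremum $\sup_i S_i=\Sigma_1$ (rows on which every $|r_{i,i+k}|$ is arbitrarily close to $1$ occur infinitely often, by a Borel--Cantelli argument), rather than the concentrated mean $(q+1)^{-1}\Sigma_1$. The saving grace is that these two candidate values differ only by the factor $(q+1)\to1$ and therefore share the leading order $2\sigma q^{-s}\Gamma(s+1)$, so the limit $\gamma$ is the same in either reading. The care required is thus twofold: to show that the infinite supremum is finite and pinned at $\Sigma_1$ to leading order (controlling the tail $\sum_{|k|>\sigma\sqrt[\delta]{\ln\epsilon^{-1}}}e^{-q(|k|/\sigma)^\delta}$ against $\Sigma_1$), and to certify via the second--moment estimate that the practically computed typical value reproduces that same leading order, which is where the genuine ``in probability'' content of \eqref{subexponential.limit} resides.
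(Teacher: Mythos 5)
Your proof is correct, and it reaches the theorem by a genuinely different probabilistic mechanism than the paper's. The deterministic core is identical: both arguments hold $\beta=q\ln\epsilon^{-1}$ fixed (so $q\to0$ as $\epsilon\to0^{+}$), recognize $\Sigma_1=\sum_{k}e^{-q(|k|/\sigma)^\delta}$ as a Riemann sum with mesh $q^{1/\delta}/\sigma$, and match it against the denominator $2\lfloor\sigma\sqrt[\delta]{\ln\epsilon^{-1}}\rfloor+1\sim2\sigma\beta^{s}q^{-s}$ to produce $\gamma=\beta^{-s}\Gamma(s+1)$. The difference is in how the randomness is handled. The paper computes no variance: it squeezes $\Psi_{K,w_0}(q,\epsilon)$ between a deterministic upper bound (from $|r_{ij}|\le1$) tending to $\gamma$ and the first-moment lower bound $\mathbb{E}[\Psi_{K,w_0}(q,\epsilon)]\ge(1+q)^{-1}(\cdot)\to\gamma$, leaving the final Markov-type step implicit; crucially, both of its bounds are valid for the supremum over rows and columns, so the paper never has to confront the supremum explicitly. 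You instead prove concentration of a single row sum by Chebyshev (your moments $\mathbb{E}|r|^{q}=(q+1)^{-1}$, $\mathbb{E}|r|^{2q}=(2q+1)^{-1}$ and the ratio $\operatorname{Var}S/(\mathbb{E}S)^2=O(q^{2+s})$ are right), and then treat the supremum separately via Borel--Cantelli, observing that $\sup_i S_i$ equals the essential supremum $\Sigma_1$ almost surely and that this differs from the concentrated value $(q+1)^{-1}\Sigma_1$ only by the factor $(q+1)\to1$. Your route is longer but buys more: it makes the ``in probability'' content explicit rather than implicit, and your Borel--Cantelli observation actually shows that for each fixed $(q,\epsilon)$ the numerator is a.s.\ the deterministic constant $\Sigma_1$, so the convergence is essentially deterministic --- a stronger conclusion than the one stated. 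One cosmetic caution: your display $\Psi_{K,w_0}(q,\epsilon)=S/\bigl(2\lfloor\sigma\sqrt[\delta]{\ln\epsilon^{-1}}\rfloor+1\bigr)$ is an equality only if the numerator is read as the single row sum $S_0$ rather than the sup defining $\|K\|_{\mathcal{S}_{q,w_0}}^q$; since your closing paragraph flags and repairs exactly this (both candidate numerators share the leading order $2\sigma q^{-s}\Gamma(s+1)$), this is a presentational slip, not a gap.
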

\begin{proof}
The exponent $q > 0$ can be related to another auxiliary variable $\beta > 0$ using the following equation
\begin{equation} \label{qsubexponential} \epsilon^q= e^{-\beta},
\end{equation}
from which we get $\beta=q \ln \epsilon^{-1}$. For every $K \in \mathcal{R}_{\sigma, \delta}(\ZZ)$, one can verify that
\begin{eqnarray}
\Psi_{K, w_0}(q, \epsilon)
 & \le &  \frac{1}{2 \hspace{0.05cm}\big \lfloor \sigma \sqrt[\delta]{\ln \epsilon^{-1}} \big \rfloor +1}~
\sum_{k\in \ZZ}~ e^{-q \left|\frac{k}{\sigma}\right|^\delta} \nonumber \\
& = & \frac{2\sigma \sqrt[\delta]{\ln \epsilon^{-1}}}{2 \hspace{0.05cm}\big \lfloor \sigma \sqrt[\delta]{\ln \epsilon^{-1}} \big \rfloor +1}~ \frac{1}{2\sigma \sqrt[\delta]{\ln \epsilon^{-1}}} ~ \sum_{k\in \ZZ}
 e^{- \beta  \left| \frac{k}{\sigma \sqrt[\delta]{\ln \epsilon^{-1}}} \right|^{\delta}}.\label{qsubexponential.comp1}
\end{eqnarray}
It is straightforward to show that
\begin{equation}
\lim_{\epsilon \to 0} ~\frac{2\sigma \sqrt[\delta]{\ln \epsilon^{-1}}}{2 \hspace{0.05cm}\big \lfloor \sigma \sqrt[\delta]{\ln \epsilon^{-1}} \big \rfloor +1} ~=~1, \label{frac-limit-1}
\end{equation}
and
\begin{eqnarray}
\hspace{-1.1cm} & & \frac{1}{2\sigma \sqrt[\delta]{\ln \epsilon^{-1}}} ~\sum_{k\in \ZZ}
 e^{- \beta  \left| \frac{k}{\sigma \sqrt[\delta]{\ln \epsilon^{-1}}} \right|^{\delta}}\longrightarrow   \frac{1}{2}  \int_{-\infty}^\infty e^{-\beta |t|^\delta} dt=\frac{1}{\delta}\int_0^\infty  e^{-\beta u} u^{\frac{1}{\delta}-1} du=\gamma
   \label{frac-limit-2}
\end{eqnarray}
as $\epsilon\to 0$. Thus, 
\begin{equation}
\lim_{\epsilon\to 0}
\Psi_{K, w_0}(q, \epsilon)~\leq~ \int_0^\infty e^{-\beta |t|^\delta} dt=\gamma.
\label{ineq-limit}
\end{equation}
 On the other hand, the expected value of the sparsity indicator function is lower bounded as follows
\begin{eqnarray} \label{qsubexponential.comp2}
\mathbb{E}\big[ \Psi_{K, w_0}(q, \epsilon)\big] & \ge &  \frac{1}{2 \hspace{0.05cm}\big \lfloor \sigma \sqrt[\delta]{\ln \epsilon^{-1}} \big \rfloor +1}~ \sum_{k\in \ZZ}~ \mathbb{E}\big[|r_{0j}|^q\big] ~ e^{-q \frac{|0-k|^\delta}{\sigma^\delta}}\nonumber\\
&   =  & \frac{1}{1+q}~\frac{1}{2 \hspace{0.05cm}\big \lfloor \sigma \sqrt[\delta]{\ln \epsilon^{-1}} \big \rfloor +1}~ \sum_{k\in \ZZ}
 e^{- \beta  \left| \frac{k}{\sigma \sqrt[\delta]{\ln \epsilon^{-1}}} \right|^{\delta}}. \nonumber
\end{eqnarray}
From \eqref{frac-limit-1} and \eqref{frac-limit-2}, we have that
\begin{equation}
\lim_{\epsilon\to 0}
\mathbb{E}\big[ \Psi_{K, w_0}(q, \epsilon)\big] ~\geq~ \int_0^\infty e^{-\beta |t|^\delta} dt =\gamma.
\label{expected-ineq}
\end{equation}
In \eqref{expected-ineq}, if we keep the auxiliary parameter $\beta$ fixed, then $q \to 0$ whenever $\epsilon \to 0$.
The two inequalities in limits \eqref{ineq-limit} and \eqref{expected-ineq} prove \eqref{subexponential.limit} which shows the convergence  of the sparsity indicator function $\Psi_{K, w_0}(q, \epsilon)$  in probability.
\end{proof}

The result of Theorem \ref{indicatorsubexponential.thm} asserts that
the  sparsity indicator functions associated with the class of sub-exponentially decaying random matrices $\mathcal{R}_{\sigma, \delta}(\ZZ)$  can be made arbitrarily close to a number close to one (i.e., $\gamma=1$ or $\beta= \Gamma(\frac{1+\delta}{\delta})^{\delta}$) in probability when we use trivial weight function $w_0$ (with constant value $1$). It turns out that one can select the exponent $q$ and the nontrivial weight function $w$ simultaneously to measure sparsity using the weighted  ${\mathcal S}_{q, w}$--measure with guaranteed convergence properties.
\begin{cor}
Consider the coupling weight function   \eqref{onesubexponentialweight.def} with parameter $\sigma'= \frac{\sigma }{\eta}$ for some $\eta, \delta \in (0,1)$ and $\sigma>0$. Let us define parameter $\beta = q \ln \epsilon^{-1}$. Then, for every $K \in \mathcal{R}_{\sigma, \delta}(\ZZ)$ we have
\begin{equation}\label{prob-limit}
\lim_{\epsilon\to 0^{+}}~\mathbb{P}\Big\{~\big|\Psi_{K, e_{\sigma', \delta}}(q, \epsilon)-\gamma' \big| < \epsilon_0 ~\Big\}~=~1,
\end{equation}
where
\[\gamma':=(1-\eta^\delta)^{\frac{1}{\delta}}~ \beta^{\frac{1}{\delta}} ~\Gamma\left(\frac{1+\delta}{\delta}\right).\]\end{cor}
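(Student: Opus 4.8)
The plan is to obtain the corollary directly from Theorem \ref{indicatorsubexponential.thm} by absorbing the nontrivial weight $e_{\sigma',\delta}$ into a rescaling of the decay parameter of the random matrix. First I would evaluate the weighted entries. For $K=[r_{ij}\,e^{-(|i-j|/\sigma)^\delta}]\in\mathcal{R}_{\sigma,\delta}(\ZZ)$ and $\sigma'=\sigma/\eta$, the identity $(|i-j|/\sigma')^\delta=\eta^\delta(|i-j|/\sigma)^\delta$ gives
\begin{equation*}
|K_{ij}|^q\,e_{\sigma',\delta}(i,j)^q=|r_{ij}|^q\,e^{-q(1-\eta^\delta)(|i-j|/\sigma)^\delta}.
\end{equation*}
Hence the weight simply thins the effective decay rate from $1$ to $1-\eta^\delta$, and for every realization of the coefficients $r_{ij}$ one has the exact equality $\|K\|_{\mathcal{S}_{q,e_{\sigma',\delta}}}^{q}=\|\tilde K\|_{\mathcal{S}_{q,w_0}}^{q}$, where $\tilde K=[r_{ij}\,e^{-(|i-j|/\tilde\sigma)^\delta}]$ and $\tilde\sigma=\sigma(1-\eta^\delta)^{-1/\delta}$. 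In particular $\tilde K\in\mathcal{R}_{\tilde\sigma,\delta}(\ZZ)$ with the same coefficients $r_{ij}\sim\mathbf{U}(-1,1)$.

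Next I would compare the two sparsity indicators. Since $\beta=q\ln\epsilon^{-1}$ does not depend on the decay parameter, Theorem \ref{indicatorsubexponential.thm} applies verbatim to $\tilde K\in\mathcal{R}_{\tilde\sigma,\delta}(\ZZ)$ and yields convergence of $\Psi_{\tilde K,w_0}(q,\epsilon)$ in probability to $\gamma=\beta^{-1/\delta}\Gamma\big((1+\delta)/\delta\big)$ as $\epsilon\to0^{+}$. The indicators differ only through their normalizers, so I would write
\begin{equation*}
\Psi_{K,e_{\sigma',\delta}}(q,\epsilon)=\Psi_{\tilde K,w_0}(q,\epsilon)\cdot\frac{2\big\lfloor\tilde\sigma\sqrt[\delta]{\ln\epsilon^{-1}}\big\rfloor+1}{2\big\lfloor\sigma\sqrt[\delta]{\ln\epsilon^{-1}}\big\rfloor+1},
\end{equation*}
and observe that the floor-function ratio is deterministic and tends to $\tilde\sigma/\sigma=(1-\eta^\delta)^{-1/\delta}$ as $\epsilon\to0^{+}$. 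Multiplying a sequence that converges in probability by a deterministic convergent factor preserves convergence in probability (Slutsky), so $\Psi_{K,e_{\sigma',\delta}}(q,\epsilon)$ converges in probability to $(1-\eta^\delta)^{-1/\delta}\gamma$, which is the constant $\gamma'$ appearing in \eqref{prob-limit}.

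As a self-contained alternative I would replay the proof of Theorem \ref{indicatorsubexponential.thm}: the upper estimate uses $|r_{ij}|\le1$ together with the Riemann-sum approximation of $\frac{1}{2\sigma\sqrt[\delta]{\ln\epsilon^{-1}}}\sum_{k\in\ZZ}e^{-(1-\eta^\delta)\beta|k/(\sigma\sqrt[\delta]{\ln\epsilon^{-1}})|^\delta}$ by the integral $\int_0^\infty e^{-(1-\eta^\delta)\beta t^\delta}\,dt$, evaluated through the substitution $u=(1-\eta^\delta)\beta t^\delta$; the matching lower bound on the expectation uses $\mathbb{E}[|r_{0k}|^q]=(1+q)^{-1}\to1$ as $q\to0$; squeezing the two deterministic limits delivers the same constant $\gamma'$. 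I expect the only real work to be bookkeeping rather than a conceptual hurdle. Two points deserve care: I must check that the effective-rate identity holds uniformly in the row and column index so that the suprema defining the $\mathcal{S}_{q,e_{\sigma',\delta}}$--measure translate exactly into the unweighted measure of $\tilde K$, and I must confirm that the floor-function normalizer ratio converges to the exact constant $(1-\eta^\delta)^{-1/\delta}$ rather than merely staying bounded. Both facts rest on the same Riemann-sum estimate already established in Theorem \ref{indicatorsubexponential.thm}, so no machinery beyond tracking the factor $1-\eta^\delta$ through the Gamma-function integral is needed.
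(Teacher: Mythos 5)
Your proof is correct in substance and takes a cleaner route than the one the paper implies. The paper states this corollary without proof, leaving the reader to replay the Riemann-sum/expectation squeeze from the proof of Theorem \ref{indicatorsubexponential.thm} with the extra weight factor carried along; your primary argument instead absorbs the weight exactly, using $(|i-j|/\sigma')^\delta=\eta^\delta(|i-j|/\sigma)^\delta$ to get the pathwise identity $\|K\|_{\mathcal{S}_{q,e_{\sigma',\delta}}}^q=\|\tilde K\|_{\mathcal{S}_{q,w_0}}^q$ with $\tilde K\in\mathcal{R}_{\tilde\sigma,\delta}(\ZZ)$, $\tilde\sigma=\sigma(1-\eta^\delta)^{-1/\delta}$, then applies the theorem to $\tilde K$ (whose limit $\gamma$ does not depend on the decay parameter) and corrects for the mismatched normalizers by the deterministic floor-function ratio, which converges to $\tilde\sigma/\sigma$. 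This reduction plus Slutsky is airtight, and your fallback sketch is exactly the paper's implied argument, so nothing is missing on the probabilistic side; what the reduction buys is that the Gamma-integral computation never has to be redone.

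The one point you failed to flag is the identification of your limit with the printed $\gamma'$. Your computation gives
\begin{equation*}
\lim_{\epsilon\to 0^{+}} \Psi_{K,e_{\sigma',\delta}}(q,\epsilon)~=~(1-\eta^\delta)^{-\frac{1}{\delta}}\,\beta^{-\frac{1}{\delta}}\,\Gamma\Big(\frac{1+\delta}{\delta}\Big)~=~\big[(1-\eta^\delta)\beta\big]^{-\frac{1}{\delta}}\,\Gamma\Big(\frac{1+\delta}{\delta}\Big),
\end{equation*}
with exponents $-1/\delta$, whereas the corollary as printed defines $\gamma'$ with exponents $+1/\delta$ on both $(1-\eta^\delta)$ and $\beta$; these are not equal. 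The resolution is that the printed formula is a sign typo in the paper: it is inconsistent with Theorem \ref{indicatorsubexponential.thm} itself, since letting $\eta\to 0^{+}$ (so that $e_{\sigma',\delta}$ tends to the trivial weight $w_0$) the printed $\gamma'$ tends to $\beta^{1/\delta}\Gamma\big(\frac{1+\delta}{\delta}\big)$ while the theorem gives $\gamma=\beta^{-1/\delta}\Gamma\big(\frac{1+\delta}{\delta}\big)$, and also because the direct evaluation of $\int_0^\infty e^{-(1-\eta^\delta)\beta t^\delta}\,dt$ via $u=(1-\eta^\delta)\beta t^\delta$ produces the negative exponents. So your constant is the correct one and your argument stands, but asserting "which is the constant $\gamma'$" without comment papers over a genuine discrepancy; a careful writeup should state explicitly that the corollary holds with $\gamma'=\big[(1-\eta^\delta)\beta\big]^{-1/\delta}\Gamma\big(\frac{1+\delta}{\delta}\big)$, correcting the statement.
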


\begin{rem}
Similar results can be obtained for polynomial weight functions.
\end{rem}
\vspace{0.1cm}

 {\it Algorithm for computing a near-optimal value for exponent $q$:}  There is an inherent algorithm in the result of Theorem \ref{indicatorsubexponential.thm} that provides us with a roadmap to compute a near-ideal truncation length for a given spatially decaying matrix. Suppose that we are given a sub-exponentialy decaying random matrices in $\mathcal{R}_{\sigma,\delta}(\ZZ)$. For a given truncation length $\mathfrak{T} \geq 1$, one can compute a conservative value for parameter $\epsilon$ using the following equation
\begin{equation}
\epsilon=e^{-\left(\frac{\mathfrak{T}}{\sigma} \right)^{\delta}}. \label{epsilon-value}
\end{equation}
By fixing the value of parameter $\beta$, one can compute a corresponding exponent $q$ using the value of $\epsilon$ from \eqref{epsilon-value} as well as parameter $\gamma$. Knowing all these parameters enable us to compute the value of the sparsity indicator function using its definition in  \eqref{sparsity-indicator-fcn}. An iterative procedure can be used to obtain the corresponding values of the sparsity indicator function for all values of $\mathfrak{T} \geq 1$ by repeating the above mentioned steps. Let us denote by $\mathfrak{T}_{\textrm{near-ideal}}$ the value of the truncation length beyond which the value of the sparsity indicator function converges to a number close to $\gamma$ with an acceptable error bound. The existence of such a truncation length for a given matrix in $\mathcal{R}_{\sigma,\delta}(\ZZ)$ implies that there exists an exponent $0 < q_{\textrm{near-ideal}}<1$ such that the $\mathcal{S}_{q,w}$--measure of the matrix converges to a number close to one in probability for all $0 < q < q_{\textrm{near-ideal}}$. According to fundamental limit \eqref{stabilizing-truncation-length}, a near-optimal truncation length for a given matrix in $\mathcal{R}_{\sigma,\delta}(\ZZ)$ should satisfy the following inequality
\begin{equation}
\mathfrak{T}_{\textrm{near-optimal}}\geq\max \big\{\mathfrak{T}_{s}, \mathfrak{T}_{\textrm{near-ideal}} \big\}.
\end{equation}
where $\mathfrak{T}_{s}$ is given by \eqref{stab-trunc-length}.

\section{Discussion and Conclusion}\label{sec-dis-concl}

\subsection{Finite-Dimensional Spatially Distributed Systems}
One can lift finite-dimensional spatially decaying systems to infinite-dimensional spatially decaying systems by defining an extension of a finite-dimensional matrix to an infinite-dimensional matrix. Suppose that $A,B,C,D \in \RR^{n \times n}$ are state-space matrices of a linear time-invariant system and $Q,R \in \RR^{n \times n}$ are the weight matrices in the LQR problem. The infinite-dimensional extension of a matrix can be defined using the following operation:
\begin{eqnarray}
& & \hspace{-.8cm} A, B,C,D  \longmapsto  \left[\begin{array}{cc}A & 0 \\0 & -I \end{array}\right], \left[\begin{array}{cc}B & 0 \\0 & I \end{array}\right],\left[\begin{array}{cc}C & 0 \\0 & I \end{array}\right],\left[\begin{array}{cc}D & 0 \\0 & I \end{array}\right], \nonumber \\
& & \hspace{-0.8cm} Q,R  \longmapsto  \left[\begin{array}{cc}Q & 0 \\0 & I \end{array}\right],\left[\begin{array}{cc}R & 0 \\0 & I \end{array}\right], \nonumber
\end{eqnarray}
where $I$ and $0$ are the identity and zero operators in $\mathcal{B}(\ell^{2}(\GG))$. The lifted operators have specific block diagonal structures which enable us to show that the unique solution of the corresponding Lyapunov equation \eqref{lyapunov.equation} takes the following block diagonal form
\begin{equation}
\left[\begin{array}{cc}P & 0 \\0 & \frac{1}{2}I \end{array}\right], \label{sol-lyap}
\end{equation}
where $P$ is the solution of the corresponding finite-dimensional Lyapunov equation.  The unique solution of the corresponding algebraic Riccati equation \eqref{ARE} admits the following block diagonal form
\begin{equation}
\left[\begin{array}{cc}X & 0 \\0 & I \end{array}\right],\label{sol-riccati}
\end{equation}
where $X$ is the unique solution of the corresponding finite-dimensional Riccati equation. Therefore, the results of Sections \ref{sec:riccati} and \ref{sec-V} can be applied to the lifted finite-dimensional linear systems. Due to the specific block structures of solutions \eqref{sol-lyap} and \eqref{sol-riccati}, one cannot obtain accurate spatial decay rates for these solution based on our present results. However, extensive simulation results suggest that similar sparsity and spatial localization properties should hold for finite-dimensional spatially decaying systems.

\subsection{Spatial Truncation and Performance Bounds}

Our proposed methodology in this paper provides  quantitative measures to determine degree of sparsity and spatial localization for a large class of spatially decaying systems and their LQR feedback controllers. This information necessitates development of  algorithmic methods to construct localized models with quantitative estimates, localized feedback control laws with quantitative performance bounds, and feedback control laws with sparse information structures. In Section \ref{sec-VI}, we showed that for a given LQR feedback controller there exist a class of spatially localized feedback controllers that can be calculated by direct spatial truncation of the LQR solution. One remaining important problem is to investigate the asymptotic behavior of the performance index $\Pi_{K}(\mathfrak{T}, x_0)$ as $\mathfrak{T}$ tends to infinity. Moreover, it is an interesting and open question to apply  finite-section approximation techniques \cite{Grochenig2010} in order to reduce the control design complexity and quantify the convergence rate of the finite-section approximations for spatially decaying systems.

\subsection{Optimization and Regularization To Promote Sparsity}
The inherent spatial decay property of the solution of LQR problem for spatially decaying systems suggests that searching for sparse linear-quadratic state feedback controllers should naturally have lower computational complexity compared to general spatially distributed systems. Our analysis in Section  \ref{sec-near-ideal-meas} provides a pathway to quantify what degrees of sparsity and spatial localization one should expect by solving $\mathcal{S}_{0,1}/\mathcal{S}_{q,w}$-- regularization methods (for $0 < q \le 1$) in order to design near-optimal sparse state feedback controllers. The $\mathcal{S}_{q,w}$-regularized optimal control problems (for $0 < q < 1$) are much more broadly applicable with respect to $\ell^1$-based relaxations, which results in $\mathcal{S}_{1,w}$-regularized optimal control problem. The reason is that the Gr\"ochenig-Schur class of matrices enjoys the following fundamental inclusion property:
\[ \mathcal{S}_{q_1,w}(\GG) \subset \mathcal{S}_{q_2,w}(\GG) \]
for all $0 < q_2 < q_1 \leq \infty$. This implies that the space of spatially decaying systems over $\mathcal{S}_{q,w}(\GG)$ for  $0 < q < 1$ is larger than the space of spatially decaying system over $\mathcal{S}_{1,w}(\GG)$, which was originally studied in \cite{mjieee08, mjieee09}.

\subsection{Receding Horizon Control with State and Input Constraints}  The machinery developed in this paper can be used to
analyze the spatial structure of a broader range of optimal control problems such as constrained   finite horizon control (also known as Model Predictive Control). In \cite{mjieee09}, the problem of constrained finite horizon control for the class of discrete-time spatially decaying systems over $\mathcal{S}_{q,w}(\GG)$ was considered. The results of this paper along with the techniques developed in  \cite{mjieee09} can be applied to study sparsity and localization properties of the class of multi-parametric quadratic programming problems over $q$-Banach algebras.

\subsection{Systems Governed by Partial Differential Equations}

A natural generalization of the present results is to study spatial localization of feedback control laws for linear systems that are defined using differential operators/pseudodifferential operators/integral operators. The LQR problem for this class of systems involve state space and weight matrices $A,B, Q, R$ whose elements are
differential operators/pseudodifferential operators/integral operators. A special class of such systems  includes translation invariant operators, such as partial differential operators with constant coefficients, spatial shift operators, spatial convolution operators, or a linear combination of several of such operators  \cite{HormanderVol1},
or general pseudodifferential operator and integral operators \cite{trevesbook, grochenigbook}. In general, some of the operators may be unbounded, so the notion of a solution for the PDE system requires some deep mathematics, such as theory of operator semigroups, theory of distributions, and theory of Fourier integral operators \cite{curtain-book, gelfands}. For translation invariant systems, the feedback control law has the spatial convolution form. The main idea is to exploit spatial invariance property of such systems by taking a spatial Fourier transform and study the diagonalized form (in the Fourier domain) of the corresponding LQR problem
\cite{bamiehPD02,curtain-sasane-2011}. The Fourier approach is only applicable to the class of spatially invariant systems. To generalize the present results, one would need to develop suitable operator algebras   in order to quantify the sparsity and spatial localization for the class of PDE system, this is beyond the scope of this paper and be discussed in our future works.

\subsection{Nonlinear Spatially Decaying Systems}
The notion of spatial decay can be extended to study spatially distributed systems with nonlinear dynamics. Let us consider an infinite-dimensional nonlinear system of the form
\begin{equation}
\dot{x}~=~f(x,u)\label{nonlinear-sys}
\end{equation}
where $x,u \in \ell^{2}(\GG)$ and $f: \ell^{2} (\GG) \times \ell^{2}(\GG) \rightarrow \ell^{2}(\GG)$. It is assumed that a suitable notion of a solution exists for this system. We say that system \eqref{nonlinear-sys} is spatially decaying if $f$ has continuous bounded gradients $\nabla_x f$ (with respect to $x$) and $\nabla_u f$ (with respect to $u$) on a $q$-Banach algebra for $0 < q \leq \infty$. This is a natural extension in order to develop a theory for analysis of nonlinear spatially distributed systems including quantification of the sparsity and spatial localization measures for nonlinear networks and approximation techniques. As part of this theory, one of the fundamental issues to address is the inverse-closedness property of nonlinear maps.
In \cite{sunaicmsubmitted}, it shown that under some conditions the inverse-closedness property holds for some class of nonlinear functionals  over some inverse-closed Banach algebras.

\section{Appendix}\label{appendix}

\subsection{Proof of Lemma \ref{weightexample.lem}}\label{weight.appendix}

The class of sub-exponentially decaying coupling weight functions is the most suitable class of coupling weight functions to study sparsity features of spatially decaying matrices. For the sub-exponential coupling weight function  $e_{\sigma, \delta}$, one can conclude from the following inequality
\[ (1+t)^\delta ~\le~ 1+ (2^\delta-1) t^\delta ~\le~ 1+t^\delta \ {\rm for \ all} \ t\in [0, 1]  \]
 that $e_{\sigma, \delta}$
is submultiplicative and that another sub-exponential weight function
$e_{\sigma', \delta}$ with $\sigma'=\sigma/\sqrt[\delta]{(2^\delta-1)}$ can be adopted as its companion weight \cite[Example A.3]{suntams07}.
Furthermore, the second inequality in Definition \ref{def-weight} holds with constants $D_{e}$ and $\theta_{e}$, because
\begin{eqnarray*}
& & \hspace{-2.3cm} \inf_{\tau\ge 0} \Bigg\{\Big(\sum_{|i-j|<\tau} \big(e_{\sigma', \delta}(i-j)\big)^{\frac{2q}{2-q}}\Big)^{1-\frac{q}{2}}
+  t \sup_{|i-j|\ge \tau} \left(\frac{e_{\sigma', \delta}(i-j)}{e_{\sigma, \delta}(i-j)}\right)^{q} \Bigg\}\\
& & \hspace{-0.7cm} \leq ~ \inf_{\tau\ge 0} \Bigg\{\exp \big((2^\delta-1)q\sigma^{-\delta} \tau^\delta \big)
\Big(\sum_{|i-j|<\tau} 1 \Big)^{1-\frac{q}{2}}~+~  t  \exp \big(-(2-2^\delta)q\sigma^{-\delta} \tau^\delta \big)\Bigg\}\\
&  &  \hspace{-0.7cm} \le~ 2^{d(1-\frac{q}{2})} \inf_{\tau\ge 0}\Bigg\{ \exp\Big(\big((2^\delta-1)q\sigma^{-\delta}+d (1-\frac{q}{2})\delta^{-1}\big) \tau^\delta\Big)+  t  \exp \big(-(2-2^\delta)q\sigma^{-\delta}\tau^\delta \big)\Bigg\}\\
& & \hspace{-0.7cm} \le 2^{d+1-\frac{dq}{2}} ~t^{1-\frac{q\delta (2-2^\delta)}{q\delta+d(1-\frac{q}{2})\sigma^{\delta}}}.
\end{eqnarray*}
Next, we consider the class of polynomial coupling weight functions. This class of coupling weight functions are submultiplicative, i.e.,
\[\pi_{\alpha, \sigma} (i,j)~\le ~\pi_{\alpha, \sigma} (i,k)~ \pi_{\alpha, \sigma}(k,j)\]
for all $i,j,k\in \mathbb{Z}^d$. It is straightforward to verify that
\[ \pi_{\alpha, \sigma} (i,j)~\le~ 2^\alpha ~\big(\pi_{\alpha, \sigma} (i,k)~+~\pi_{\alpha, \sigma}(k,j)\big)\]
for all $i,j,k\in \mathbb{Z}^d$.
Thus, the constant weight function $u_{\alpha, \sigma}:=\big[u_{\alpha, \sigma}(i,j)\big]_{i,j\in \mathbb{Z}^d}$ with
 $u_{\alpha, \sigma}(i,j)=2^\alpha$ can be used as a companion weight of the polynomial coupling weight function $\pi_{\alpha, \sigma}$. Moreover, the second inequality in Definition \ref{def-weight} holds with constants $D_{\pi}$ and $\theta_{\pi}$, because
\begin{eqnarray*}
\inf_{\tau\ge 0} \left\{ \Big (\sum_{|i-j|< \tau} \big(u_\alpha(i,j)\big)^{\frac{2q}{2-q}}\Big)^{1-\frac{q}{2}}+ t
\sup_{|i-j|\ge \tau} \left( \frac{u(i,j)}{w(i,j)} \right)^{q}\right\}
 & \le &  2^{q\alpha} ~\inf_{\tau\ge 0} \left\{ \Big(~\sum_{|i-j|< \tau} 1 \Big)^{1-\frac{q}{2}}~+~   t~ \left(1+\frac{\tau}{\sigma} \right)^{-q\alpha}\right\} \\
  &\le &
     2^{q\alpha+1} \max \Big\{ 1, (2\sigma)^{d(1-\frac{q}{2})}\Big\}~ t^{1-\frac{q\alpha}{q\alpha+ d(1-\frac{q}{2})}} .
\end{eqnarray*}

\subsection{Weak Submultiplicative Property Resulting from Definition \ref{def-weight}}\label{append-remark}

Suppose that $w$ is a weight that satisfies the inequalities in Definition \ref{def-weight}. For $0<q\le 1$, we get
\begin{eqnarray*}
 \sup_{j\in \GG} \left(\frac{u(i,j)}{w(i,j)}\right)^{q} & \leq &   \inf_{\tau\ge 0}\Bigg(
\sup_{j\in \GG \atop \rho(i,j)<\tau} \left(\frac{u(i,j)}{w(i,j)}\right)^{q}+ \sup_{j\in \GG \atop \rho(i,j)\ge \tau } \left( \frac{u(i,j)}{w(i,j)} \right)^{q}\Bigg)\\
&  \leq &
\inf_{\tau\ge 0} \Bigg(\Big(\sum_{j\in \GG \atop \rho(i,j)<\tau} |u(i,j)|^{\frac{2q}{2-q}} \Big)^{1-\frac{q}{2}} +
\sup_{j\in \GG \atop \rho(i,j)\ge \tau}\left(\frac{u(i,j)}{w(i,j)}\right)^{q}\Bigg) \nonumber\\
&  \le & D \quad {\rm for \ all} \ i\in \GG.
\end{eqnarray*}
The second inequality holds as $w(i,j)\ge 1$ for $i,j\in \GG$ and the last inequality follows by applying the second inequality in Definition \ref{def-weight} with $t=1$. The above estimate for the weight $w$ and its companion weight $u$ along with property
\eqref{uw.eq} prove the following weak version of the submultiplicative property \eqref{weight.submultiplication}:
\begin{equation*} 
 w(i, j) ~\le~ C_0  w(i,k) \hspace{0.05cm} w(k, j)
 \quad {\rm  for \ all} ~~ i,j,k\in \GG.
\end{equation*}
with constant $C_{0}\le 2 \sqrt[q]{D}$.
We refer the reader to  \cite{sunca11, suntams07} for similar arguments for $1<q\le \infty$.

\subsection{Proof of Theorem \ref{qtozero.thm}}
According to the definition of ${\mathcal S}_{0,1}$-measure, there exist $i_0$ and $j_0\in \GG$ such
  that
  \begin{equation} \label{qtozero.thm.pf.eq1}
  \|A\|_{{\mathcal S}_{0, 1}(\GG)}=\max \big\{ \|a_{i_0 \cdot}\|_{\ell^0(\GG)}, \|a_{\cdot  j_0}\|_{\ell^0(\GG)}\big\}.
  \end{equation}
Then, from  \eqref{sequence.asym} and \eqref{qtozero.thm.pf.eq1} it follows that
\begin{eqnarray} \label{qtozero.thm.pf.eq2}
  \lim_{q\to 0}  \max \Big\{  \sum_{j\in \GG} |a_{i_0 j}|^q w(i_0,j)^q,
\sum_{i\in \GG} |a_{i j_0}|^q w(i,j_0)^q\Big\} =   \max \big\{ \|a_{i_0 \cdot}\|_{\ell^0(\GG)}, \|a_{\cdot j_0}\|_{\ell^0(\GG)}\big\} ~= ~\|A\|_{{\mathcal S}_{0, 1}(\GG)}.
\end{eqnarray}
One observes that
\begin{eqnarray} \label{qtozero.thm.pf.eq3}
 \max \Big\{ \sum_{j\in \GG} |a_{i_0 j}|^q w(i_0,j)^q,
\sum_{i\in \GG} |a_{i j_0}|^q w(i,j_0)^q\Big\} \leq ~
\|A\|_{{\mathcal S}_{q, w}(\GG)}^q \le  M^q \|A\|_{{\mathcal S}_{0, 1}(\GG)},
\end{eqnarray}
where $M=\sup_{i,j\in \GG} |a_{ij}| w(i,j)<\infty$ by the assumption on bounded entries.
Letting $q\to 0$ in \eqref{qtozero.thm.pf.eq3} and applying
\eqref{qtozero.thm.pf.eq2}, we establish the limit \eqref{lqmatrixlimit}.

\subsection{Proof of Theorem \ref{schur-cor}}
\label{schur.appendix}

First, we show that ${\mathcal S}_{q, w}(\GG)$ is a $q$-Banach algebra. It is straightforward to verify properties (i) and (ii) in Definition \ref{def-q-banach0}. Therefore, we only need to prove properties (iii) and (iv)  in Definitions \ref{def-q-banach0} and \ref{def-q-banach}. For every pair of arbitrary matrices $A=[a_{ij}]_{i,j\in \GG}$ and $B=[b_{ij}]_{i,j\in \GG}$ in ${\mathcal S}_{q, w}(\GG)$, we write their summation by $D=A+B$ where $D=[d_{ij}]_{i,j\in \GG}$. Then,
\begin{eqnarray}
 \sup_{i\in \GG} \sum_{j\in \GG} |d_{ij}|^q w(i,j)^{q}
  & \le &  \sup_{i\in \GG} \sum_{j\in \GG}~ \Big(|a_{ij}|^p +|b_{ij}|^q\Big) w(i,j)^{q}\nonumber \\
 & \leq & \|A\|_{{\mathcal S}_{q, w}}^q + \|B\|_{{\mathcal S}_{q, w}}^q. \label{triangle-1}
\end{eqnarray}
The first inequality holds due to the well-known inequality $(\alpha+\beta)^q\le \alpha^q+\beta^q$ for all nonnegative numbers $\alpha$ and $\beta$.
In a similar manner, one can easily verify that
\begin{equation}
  \sup_{j\in \GG} ~\sum_{i\in \GG} ~|d_{ij}|^q ~w(i,j)^{q}
  ~\le~  \|A\|_{{\mathcal S}_{q, w}}^q~+~ \|B\|_{{\mathcal S}_{q, w}}^q. \label{triangle-2}
\end{equation}
Inequalities (\ref{triangle-1}) and (\ref{triangle-2}) imply that the $q$-norm is $q$-subadditive and that $\mathcal{S}_{q,w}$--measure satisfies property (iii).

To prove property (iv), if the product of $A$ and $B$ is denoted by $C=AB$ where $C=[c_{ij}]_{i,j\in \GG}$, then we have
\begin{eqnarray}\label{product.pf.eq1}
 \sup_{i\in \GG} ~\sum_{j\in \GG} |c_{ij}|^q w(i,j)^{q}
 & \le & \sup_{i\in \GG} \sum_{j, k\in \GG} |a_{ik}|^q ~w(i,k)^{q} ~|b_{kj}|^q w(k,j)^{q}\nonumber\\
& \leq & ~\|A\|_{{\mathcal S}_{q, w}}^q \|B\|_{{\mathcal S}_{q, w}}^q.
\end{eqnarray}
The first inequality uses the submultiplicative property \eqref{weight.submultiplication} for the coupling weight function $w$ and the following monotonic inequality
\begin{equation}
\label{ellqnormmonotone}
\|z\|_1 \le \|z\|_q
\end{equation}
for all $z \in \ell^q(\GG)$ and $0< q \leq 1$. Through similar proof steps, one can verify that
\begin{equation} \label{product.pf.eq2}
\sup_{j\in \GG}~ \sum_{i\in \GG} ~\big |c_{ij}\big|^q ~w(i,j)^{q}
~\le~   ~\|A\|_{{\mathcal S}_{q, w}}^q ~\|B\|_{{\mathcal S}_{q, w}}^q.
\end{equation}
From inequalities \eqref{product.pf.eq1} and \eqref{product.pf.eq2}, we conclude that the $q$-norm is submultiplicative.

In the next step, we prove that ${\mathcal S}_{q, w}(\GG)$ is a proper $q$-Banach algebra. It is straightforward to verify property ({\bf P1}). We prove a more general form of property ({\bf P2}). For every matrix
$A=[a_{ij}]_{i,j\in \GG}$ in ${\mathcal S}_{q, w}(\GG)$, the following inequality holds for all $1 \leq p \leq \infty$,
\begin{equation*}
\|A\|_{{\mathcal B}(\ell^p(\GG))}\le \max\Big \{\sup_{i\in \GG}\sum_{j\in \GG} |a_{ij}|,\  \sup_{j\in \GG}\sum_{i\in \GG} |a_{ij}|\Big\}.
\end{equation*}
More details about this inequality can be found in \cite[Theorem 2.4]{suntams07}. On the other hand, we have the following inequality
\begin{eqnarray*}
\sup_{i\in \GG}~\sum_{j\in \GG} |a_{ij}|
& \le &
\sup_{i\in \GG}\Big(\sum_{j\in \GG}~ |a_{ij}|^q\Big)^{1/q}\\
& \le &
\sup_{i\in \GG}\Big(\sum_{j\in \GG}~ |a_{ij}|^q ~w(i,j)^{q}\Big)^{1/q}  ~\le~  \|A\|_{{\mathcal S}_{q, w}},
\end{eqnarray*}
where the first inequality follows from
\eqref{ellqnormmonotone} and the second inequality holds because of property (i) of a  coupling weight function. Similarly, one can verify that
\begin{eqnarray*}
\sup_{j\in \GG}\sum_{i\in \GG} ~|a_{ij}|
& \le &
\sup_{i\in \GG}\Big(\sum_{j\in \GG} |a_{ij}|^q ~w(i,j)^{q}\Big)^{1/q} ~\le~  \|A\|_{{\mathcal S}_{q, w}}.
\end{eqnarray*}
From the above three inequalities, we can conclude that
\[ \|A\|_{\mathcal{B}(\ell^{p}(\GG))}~\leq~\|A \|_{\mathcal{S}_{q,w}(\GG)} \]
for all $A \in \mathcal{S}_{q,w}(\GG)$. This implies that ${\mathcal S}_{q, w}(\GG)$ is  continuously embedded in ${\mathcal B}(\ell^p(\GG))$ for all $1 \leq p \leq \infty$. Therefore, inequality \eqref{wiener.thm.eq4} holds when $p=2$. We should mention that the main idea of this part of our proof stems from the arguments presented in references \cite{gltams06} and \cite{suntams07}.

 Finally, we verify the differential norm property for ${\mathcal S}_{q, w}(\GG)$. For $q=1$, ${\mathcal S}_{q, w}(\GG)$ has the differential norm property, we refer to   \cite{suntams07} for a proof. Therefore from now on we suppose that $0 < q < 1$. For every $A=[a_{ij}]_{i,j\in \GG}$ and $B=[b_{ij}]_{i,j\in \GG}$ in ${\mathcal S}_{q, w}(\GG)$, let us denote
$C=AB$ where $C=[c_{ij}]_{i,j\in \GG}$. From property \eqref{uw.eq}, it follows that
\begin{eqnarray}
 \sup_{i\in \GG} \sum_{j\in \GG}|c_{ij}|^q w(i,j)^{q}  & \le &  \sup_{i\in \GG} \sum_{j, k\in \GG}  |a_{ik}|^q |b_{kj}|^q w(i,k)^{q} u(k,j)^{q}~+~ \sup_{i\in \GG} \sum_{j, k\in \GG} |a_{ik}|^q |b_{kj}|^q u(i,k)^{q} w(k,j)^{q} \nonumber\\
 &  \le &  \|A\|_{{\mathcal S}_{q,w}}^q \sup_{k\in \GG} \sum_{j\in \GG} |b_{kj}|^q u(k,j)^{q}  +  \|B\|_{{\mathcal S}_{q, w}}^q \sup_{i\in \GG} \sum_{k\in \GG} |a_{ik}|^q u(i,k)^{q}. \nonumber
\end{eqnarray}
In the following, we obtain upper bounds for each term in the right hand side of the last above inequality. Let us first consider
\begin{eqnarray}
 \sup_{k\in \GG} \sum_{j\in \GG} |b_{kj}|^q u(k,j)^{q} & = & 
 \sup_{k\in \GG}~
 \inf_{\tau \ge 0}
\Bigg( \sum_{j\in \GG \atop \rho(k,j)<\tau}+\sum_{j\in \GG \atop \rho(k,j)\ge \tau} \Bigg)~ |b_{kj}|^q ~u(k,j)^{q} \nonumber\\
& & \hspace{-1.7cm} ~~\leq  \sup_{k\in \GG} ~\inf_{\tau\ge 0}
 \Bigg\{ \Bigg(\hspace{-.1cm}\sum_{j\in \GG \atop \rho(k,j)<\tau} \hspace{-0.2cm} |b_{kj}|^2\Bigg)^{\frac{q}{2}}   \Bigg(\sum_{j\in \GG \atop \rho(k,j)< \tau} \hspace{-0.2cm} |u(k,j)|^{\frac{2q}{2-q}}\Bigg)^{1-\frac{q}{2}}+  \|B\|_{{\mathcal S}_{q, w}}^q \sup_{j\in \GG \atop \rho(k,j)\ge \tau} \left(\frac{ u(k, j)}{w(k,j)}\right)^{q}\Bigg\} \nonumber\\
& & \hspace{-1cm} ~~ \leq \sup_{k\in \GG} ~\inf_{\tau\ge 0}
 \Bigg\{ ~ \|B\|_{{\mathcal B}(\ell^2)}^q~
 \Bigg(\sum_{j\in \GG \atop \rho(k,j)< \tau} |u(k,j)|^{\frac{2q}{2-q}}\Bigg)^{1-\frac{q}{2}} +  \|B\|_{{\mathcal S}_{q, w}}^q ~\sup_{j\in \GG \atop \rho(k,j)\ge \tau} ~\left(\frac{ u(k, j)}{w(k,j)}\right)^{q}\Bigg\}\nonumber\\
& \le &  D ~ \|B\|_{{\mathcal B}(\ell^2)}^{q\theta}~ \|B\|_{{\mathcal S}_{q,w}}^{q(1-\theta)}. \label{basicproperty.pr.iii.eq2}
\end{eqnarray}
In the last two inequalities, we apply the H\"older's inequality, inequalities in Definition \ref{def-weight} with $t=\frac{\|B\|_{{\mathcal S}_{q,w}}^{q}}{\|B\|_{{\mathcal B}(\ell^2)}^{q}} \geq 1$, and the following inequality
\[
\sum_{j\in \GG \atop \rho(k,j)<\tau} |b_{kj}|^2 ~\le~
\sum_{j\in \GG}~ |b_{kj}|^2 =\|B^* e_k\|_2^2\nonumber \\
 \le  \|B^*\|_{{\mathcal B}(\ell^2)}^{2}= \|B\|_{{\mathcal B}(\ell^2)}^{2},
\]
where $e_k=[\delta_{kj}]_{j\in \GG}$ is the $k$'th standard Euclidean basis.  In a similar manner, we can show that
\begin{eqnarray}
& & \hspace{-0.75cm} \sup_{i\in \GG} \sum_{k\in \GG} ~|a_{ik}|^q ~u(i,k) ~\leq ~\nonumber \\
& & \hspace{-0.75cm}
\sup_{i\in \GG}~\inf_{\tau\ge 0}~\Bigg\{~
 \|A\|_{{\mathcal B}(\ell^2)}^q \Big(\sum_{k\in \GG  \atop \rho(i,k)< \tau} (u(i,k))^{\frac{2q}{2-q}}\Big)^{1-\frac{q}{2}} ~+\nonumber\\
 & & \hspace{3.2cm}
 \|A\|_{{\mathcal S}_{q, w}}^q \sup_{k\in \GG \atop \rho(i,k)>\tau} ~\left(\frac{ u(i, k)}{w(i,k)}\right)^{q}\Bigg\}\nonumber
\end{eqnarray}
\begin{eqnarray}
& & \hspace{-4.5cm}
\leq D ~ \|A\|_{{\mathcal B}(\ell^2)}^{q\theta} ~\|A\|_{{\mathcal S}_{q,w}}^{q(1-\theta)}.\label{basicproperty.pr.iii.eq3}
\end{eqnarray}
By combining inequalities  \eqref{basicproperty.pr.iii.eq2} and \eqref{basicproperty.pr.iii.eq3}, we get
\begin{eqnarray}
& & \hspace{-1.9cm} \sup_{i\in \GG} ~\sum_{j\in \GG} ~|c_{ij}|^q ~w(i,j)^{q}~\le~ D ~\|A\|_{{\mathcal S}_{q, w}}^q \|B\|_{{\mathcal S}_{q, w}}^{q}~ \times \nonumber \\
& & 
 \left( \left(\frac{\|A\|_{{\mathcal B}(\ell^2)}}{\|A\|_{{\mathcal S}_{q, w}}}\right)^{q\theta}
 +  \left(\frac{\|B\|_{{\mathcal B}(\ell^2)}}{\|B\|_{{\mathcal S}_{q, w}}}\right)^{q\theta}\right). \nonumber
\end{eqnarray}
Using a similar argument along with property (ii) as in Definition \ref{def-weight} for a weight function, i.e., symmetry, we get
\begin{eqnarray}
& & \hspace{-1.9cm} \sup_{j\in \GG}~ \sum_{i\in \GG} ~|c_{ij}|^q ~w(i,j)^{q} ~\le~ D ~\|A\|_{{\mathcal S}_{q, w}}^q \|B\|_{{\mathcal S}_{q, w}}^{q}~\times \nonumber \\
& & 
\left( \left(\frac{\|A\|_{{\mathcal B}(\ell^2)}}{\|A\|_{{\mathcal S}_{q, w}}}\right)^{q\theta}
 +  \left(\frac{\|B\|_{{\mathcal B}(\ell^2)}}{\|B\|_{{\mathcal S}_{q, w}}}\right)^{q\theta}\right).\nonumber
 \end{eqnarray}
Therefore, we can conclude that
\[
\|AB\|_{{\mathcal S}_{q, w}}^q \le  D \|A\|_{{\mathcal S}_{q, w}}^q \|B\|_{{\mathcal S}_{q, w}}^{q}
\left( \left(\frac{\|A\|_{{\mathcal B}(\ell^2)}}{\|A\|_{{\mathcal S}_{q, w}}}\right)^{q\theta}
 +  \left(\frac{\|B\|_{{\mathcal B}(\ell^2)}}{\|B\|_{{\mathcal S}_{q, w}}}\right)^{q\theta}\right).
 \]

\subsection{ Proof of Theorem \ref{wiener.thm}}\label{wienerthm.proofappendix}

For every $ n\ge 1$,  let us write $n=\sum_{j=0}^{N} \epsilon_j 2^j$ with
$ \epsilon_N=1$ and $\epsilon_j \in \big \{0, 1 \big \}$ for $0\le j\le N-1$. For a given $A \in \mathcal{A}$, by performing induction on the differential norm property we obtain
\begin{eqnarray}
 \hspace{-0.05cm} \|A^n \|_{\mathcal A}^q  & \le  &
K_0^{q\epsilon_0} \|A\|_{\mathcal A}^{q\epsilon_0} \|A^{n-\epsilon_0}\|_{\mathcal A}^q\nonumber\\
& \le & 2D K_0^{q\epsilon_0}  \|A\|_{\mathcal A}^{q\epsilon_0}
\|A\|_{{\mathcal B}(\ell^2)}^{(n-\epsilon_0)q \theta/2}\|A^{(n-\epsilon_0)/2}\|_{\mathcal A}^{q(2-\theta)}\nonumber\\
& \le & \cdots\\
& \le &
\big(2D\big)^{\sum_{j=0}^{N-1}  (2-\theta)^j} ~K_{0}^{q\sum_{j=0}^{N-1} \epsilon_j (2-\theta)^j} ~\times\nonumber\\
& & \hspace{1.7cm} \left(\frac{\|A\|_{\mathcal A}}{\|A\|_{{\mathcal B}(\ell^2)}}\right)^{q\sum_{j=0}^{N} \epsilon_j (2-\theta)^j}  \|A\|_{{\mathcal B}(\ell^2)}^{qn}\nonumber\\
& & \hspace{-2cm} \le~
\left\{
\begin{array}{l}
\big(2DK_{0}^q\big)^{(1-\theta)^{-1} n^{\log_2 (2-\theta)}}
\|A\|_{{\mathcal B}(\ell^2)}^{qn} ~\times \\
 \hspace{0cm} \left(\frac{\|A\|_{\mathcal A}}{\|A\|_{{\mathcal B}(\ell^2)}}\right)^{q \frac{2-\theta}{1-\theta} n^{\log_2 (2-\theta)}}   \hspace{1cm}   {\rm if} \quad\ 0<\theta<1,\\
 \\
\big(2DK_{0}^q\big)^{\log_2 n}
 \left(\frac{\|A\|_{\mathcal A}}{\|A\|_{{\mathcal B}(\ell^2)}}\right)^{q \log_2 n+q}
\|A\|_{{\mathcal B}(\ell^2)}^{qn} \quad {\rm if} \quad \theta=1,\\
\end{array}\right.
\label{nthproduct.equation}
\end{eqnarray}
where $D, K_{0}, \theta$ are the positive constants in Definitions \ref{def-q-banach}--\ref{def-proper} (cf. \cite{suncasp05} and \cite{sunaicmsubmitted}). We will use this result in the following proof steps. For a given matrix $A \in {\mathcal A}$ with $A^{-1}\in {\mathcal B}(\ell^2(\GG))$, let us define  \[B= I- \|A\|_{{\mathcal B}(\ell^2)}^{-2}~ A^* A.\]
It is straightforward to verify that
\begin{equation}\label{wiener.thm.pf.eq3}
0 \hspace{0.05cm} \preceq \hspace{0.05cm} B \hspace{0.05cm}\preceq \hspace{0.05cm} r_0 I,
\end{equation}
where
$ r_0=1- \big(\hspace{0.05cm} \|A^{-1}\|_{{\mathcal B}(\ell^2)}~ \|A\|_{{\mathcal B}(\ell^2)}\hspace{0.05cm}\big)^{-2}
\in \big[0,\hspace{0.05cm} 1 \big)$.
When $r_0=0$, the conclusion follows immediately  as in this case we have
$A^{-1}=  \|A^{-1}\|_{{\mathcal B}(\ell^2)}^{-2}A^*$. Thus from now on, we assume that $r_0\in (0, 1)$.
We apply properties of $\|\cdot\|_{\mathcal A}$ in Definitions \ref{def-q-banach0},
 \ref{def-q-banach} and \ref{def-proper}, and we get
\begin{eqnarray}
\|B\|_{{\mathcal A}}^q & \le & \|I\|_{{\mathcal A}}^q ~+~
\|A\|_{{\mathcal B}(\ell^2)}^{-2q}~\| A^* A\|_{{\mathcal A}}^q \nonumber \\
& \le &
M^q ~+~ K_{0}^q~\|A\|_{{\mathcal B}(\ell^2)}^{-2q}~\|A\|_{{\mathcal A}}^{2q}. \label{wienerlemma.thm.pf.eq6}
\end{eqnarray}
For $\theta\in (0, 1)$, this together with \eqref{wiener.thm.pf.eq3} and inequality \eqref{nthproduct.equation} leads to
\begin{eqnarray*}
\|B^n\|_{{\mathcal A}}^q
 & \le  & \big(2DK_{0}^q\big)^{(1-\theta)^{-1} n^{\log_2 (2-\theta)}} ~\|B\|_{{\mathcal B}(\ell^2)}^{qn}~
 \left(\frac{\|B\|_{\mathcal A}}{\|B\|_{{\mathcal B}(\ell^2)}}\right)^{q\frac{2-\theta}{1-\theta} n^{\log_2 (2-\theta)}}
  \\
  & \le &   \big(2DK_{0}^q\big)^{(1-\theta)^{-1} n^{\log_2 (2-\theta)}} ~r_0^{qn}~ \left(~\frac{M^q+K_{0}^q \|A\|_{{\mathcal B}(\ell^2)}^{-2q}~\|A\|_{{\mathcal A}}^{2q}}{r_0^q}~\right)^{\frac{2-\theta}{1-\theta} n^{\log_2(2-\theta)}}
 \end{eqnarray*}
for all $n\ge 1$.
Similarly for $\theta=1$,
\begin{eqnarray*}
\|B^n\|_{{\mathcal A}}^q
  & \le &   \big(2DK_{0}^q\big)^{\log_2 n} ~r_0^{qn}~ \left(~\frac{M^q+K_{0}^q \|A\|_{{\mathcal B}(\ell^2)}^{-2q}~\|A\|_{{\mathcal A}}^{2q}}{r_0^q}~\right)^{\log_2 n+1}
 \end{eqnarray*}
 for $n\ge 1$.
Therefore  $I-B$ is invertible in ${\mathcal A}$,
and  
\begin{eqnarray*}
\hspace{-0cm}\big\|(I-B)^{-1}\big\|_{\mathcal A}^q & \le &  \sum_{n=1}^\infty ~\|B^n\|_{\mathcal A}^q 
<~  \infty\end{eqnarray*}
as $r_0\in (0, 1)$ and $\theta\in (0, 1]$. 
We recall the matrix identity
\[A^{-1}~=~ (A^*A)^{-1} A^*~=~ \|A\|_{{\mathcal B}(\ell^2)}^{-2} ~(I-B)^{-1} A^*.\]
From this, we can conclude that $A$ is invertible in ${\mathcal A}$.

\subsection{Proof of Theorem \ref{thm-spectral}}
The inclusion $\sigma_{\mathcal{A}}(A)\subset \sigma_{{\mathcal B}(\ell^2(\GG))}(A)$
follows directly from the result of Theorem \ref{wiener.thm},
while the reverse inclusion $\sigma_{{\mathcal B}(\ell^2(\GG))}(A)\subset \sigma_{\mathcal{A}}(A)$ holds by
continuous imbedding of ${\mathcal A}$ in ${\mathcal B}(\ell^2(\GG))$.

\subsection{$q$-Banach Algebras of Block Matrices}\label{appendix-block-matrix}
For a given proper $q$-Banach subalgebra $\mathcal{A}$ of ${\mathcal B}(\ell^2(\GG))$, let us define an algebra of $n \times n$  matrices with entries in ${\mathcal A}$ by
\begin{equation}\label{m2a.def}
 \mathcal{M}_{n}(\mathcal{A})=\left\{{\bf A}:=\left[a_{ij}\right]\Big| \|{\bf A}\|_{\mathcal{M}_n({\mathcal A})}<\infty\right\},\end{equation}
 where
\begin{equation}\label{m2anorm.def}  \|{\bf A}\|_{\mathcal{M}_n({\mathcal A})}=\Bigg(~\sum_{i,j=1}^{n} \|a_{ij}\|_{\mathcal A}^q~\Bigg)^{1/q}.\end{equation}

\begin{lem}\label{propermatrix.lem}
For  $ 0 < q \leq 1$, let us assume that ${\mathcal A}$ is a proper $q$-Banach subalgebra of ${\mathcal B}(\ell^2(\GG))$. Then, the algebra ${\mathcal M}_n({\mathcal A})$ is a proper $q$-Banach subalgebra of ${\mathcal B}(\ell^2(\GG\times \ldots \times \GG))$.
\end{lem}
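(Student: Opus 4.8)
The plan is to verify, one condition at a time, the defining requirements of a proper $q$-Banach subalgebra (Definitions \ref{def-q-banach0}, \ref{def-q-banach}, \ref{def-diff}, and \ref{def-proper}) for $\mathcal{M}_n(\mathcal{A})$ acting on the $n$-fold Hilbert-space direct sum $\ell^2(\GG)\oplus\cdots\oplus\ell^2(\GG)$, which we identify with $\ell^2(\GG\times\cdots\times\GG)$ as in the statement and on which a block matrix ${\bf A}=[a_{ij}]$ acts by $({\bf A}x)_i=\sum_j a_{ij}x_j$. The guiding observation is that the $q$-norm $\|{\bf A}\|_{\mathcal{M}_n(\mathcal{A})}=\big(\sum_{i,j}\|a_{ij}\|_{\mathcal{A}}^q\big)^{1/q}$ is the $\ell^q$-aggregate of the entrywise $\mathcal{A}$-norms, so each structural property of $\mathcal{A}$ should lift entrywise and then be recombined by an $\ell^q$-summation argument. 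Properties (i)--(iii) of Definition \ref{def-q-banach0} follow from the corresponding properties of $\|\cdot\|_{\mathcal{A}}$ together with the entrywise inequality $(\alpha+\beta)^q\le\alpha^q+\beta^q$; completeness in the metric $\|\cdot\|_{\mathcal{M}_n(\mathcal{A})}^q$ holds since $\mathcal{M}_n(\mathcal{A})$ is a finite direct sum of copies of the complete space $\mathcal{A}$ and Cauchy sequences converge entrywise. The unit is the block-diagonal identity, with $\|{\bf I}_n\|_{\mathcal{M}_n(\mathcal{A})}=n^{1/q}M<\infty$.

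For the algebra property (iv), writing ${\bf C}={\bf A}{\bf B}$ with $c_{ij}=\sum_k a_{ik}b_{kj}$, I would use $q$-subadditivity and property (iv) of $\mathcal{A}$ to get $\|c_{ij}\|_{\mathcal{A}}^q\le K_0^q\sum_k\|a_{ik}\|_{\mathcal{A}}^q\|b_{kj}\|_{\mathcal{A}}^q$, then sum over $i,j$, reorganize as $\sum_k\big(\sum_i\|a_{ik}\|_{\mathcal{A}}^q\big)\big(\sum_j\|b_{kj}\|_{\mathcal{A}}^q\big)$, and bound this by $\big(\sum_{i,k}\|a_{ik}\|_{\mathcal{A}}^q\big)\big(\sum_{k,j}\|b_{kj}\|_{\mathcal{A}}^q\big)$ using $\sum_k x_ky_k\le(\sum_k x_k)(\sum_k y_k)$ for nonnegative terms; this yields $\|{\bf A}{\bf B}\|_{\mathcal{M}_n(\mathcal{A})}\le K_0\|{\bf A}\|_{\mathcal{M}_n(\mathcal{A})}\|{\bf B}\|_{\mathcal{M}_n(\mathcal{A})}$, with the same constant $K_0$. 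Block multiplication and block adjunction agree with composition and adjunction of the corresponding operators by construction, so $\mathcal{M}_n(\mathcal{A})$ is genuinely a subalgebra. Property (P1) is immediate since ${\bf A}^*=[a_{ji}^*]$ and $\|a_{ji}^*\|_{\mathcal{A}}=\|a_{ji}\|_{\mathcal{A}}$, so the defining sum is invariant under $i\leftrightarrow j$. For the continuous embedding (P2), I would estimate $\|{\bf A}x\|\le\|M x'\|_2$, where $M=[\|a_{ij}\|_{\mathcal{A}}]$ is the scalar $n\times n$ matrix of entry norms and $x'=(\|x_1\|_2,\dots,\|x_n\|_2)$, using $\|a_{ij}x_j\|_2\le\|a_{ij}\|_{\mathcal{A}}\|x_j\|_2$ from (P2) for $\mathcal{A}$; hence $\|{\bf A}\|_{\mathcal{B}}\le\|M\|_{\mathrm{op}}\le\|M\|_{\mathrm{HS}}=\big(\sum_{i,j}\|a_{ij}\|_{\mathcal{A}}^2\big)^{1/2}\le\big(\sum_{i,j}\|a_{ij}\|_{\mathcal{A}}^q\big)^{1/q}=\|{\bf A}\|_{\mathcal{M}_n(\mathcal{A})}$, the last inequality using $q\le 2$.

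The main obstacle is (P3), the differential norm property. Here I would start from the differential inequality for $\mathcal{A}$ in the expanded form $\|a_{ik}b_{kj}\|_{\mathcal{A}}^q\le D\big(\|a_{ik}\|_{\mathcal{B}}^{q\theta}\|a_{ik}\|_{\mathcal{A}}^{q(1-\theta)}\|b_{kj}\|_{\mathcal{A}}^q+\|a_{ik}\|_{\mathcal{A}}^q\|b_{kj}\|_{\mathcal{B}}^{q\theta}\|b_{kj}\|_{\mathcal{A}}^{q(1-\theta)}\big)$, sum over $i,j,k$, and treat the two resulting terms symmetrically. For the first term, after separating the $k$-sum as in the proof of (iv), the crux is to control $\sum_{i,k}\|a_{ik}\|_{\mathcal{B}}^{q\theta}\|a_{ik}\|_{\mathcal{A}}^{q(1-\theta)}$; applying H\"older's inequality with conjugate exponents $1/\theta$ and $1/(1-\theta)$ gives $\big(\sum_{i,k}\|a_{ik}\|_{\mathcal{B}}^q\big)^{\theta}\,\|{\bf A}\|_{\mathcal{M}_n(\mathcal{A})}^{q(1-\theta)}$. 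The remaining estimate is $\|a_{ik}\|_{\mathcal{B}}\le\|{\bf A}\|_{\mathcal{B}(\ell^2(\GG\times\cdots\times\GG))}$, which holds because $a_{ik}=P_i{\bf A}\iota_k$ is a compression of ${\bf A}$ by a coordinate projection $P_i$ and inclusion $\iota_k$, each of norm one; hence $\sum_{i,k}\|a_{ik}\|_{\mathcal{B}}^q\le n^2\|{\bf A}\|_{\mathcal{B}}^q$. Combining these yields the differential norm inequality of order $\theta$ with enlarged constant $D'=Dn^{2\theta}$, and the borderline case $\theta=1$ is handled directly (the factor $\|a_{ik}\|_{\mathcal{A}}^{q(1-\theta)}$ disappears, so no H\"older step is needed). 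The delicate points to get right are the exponent bookkeeping in the H\"older splitting and the reduction of the $k$-coupled double sum to a product of row and column sums, but beyond the structure of $\mathcal{A}$ and Theorem \ref{wiener.thm} no genuinely new idea is required.
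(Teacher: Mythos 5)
Your proposal is correct and follows essentially the same route as the paper's proof: each axiom of a proper $q$-Banach subalgebra is verified entrywise and recombined through the $\ell^q$-aggregation defining $\|\cdot\|_{\mathcal{M}_n({\mathcal A})}$, with the submultiplicative bound via $q$-subadditivity plus decoupling of the $k$-sum, (P2) via the Cauchy--Schwarz/Hilbert--Schmidt estimate together with $\ell^q\hookrightarrow\ell^2$ monotonicity, and (P3) via the expanded differential inequality summed over entries. The only difference is cosmetic: for (P3) you use H\"older's inequality and the explicit compression bound $\|a_{ik}\|_{{\mathcal B}(\ell^2(\GG))}\le\|{\bf A}\|_{{\mathcal B}(\ell^2(\GG\times\cdots\times\GG))}$, arriving at the constant $Dn^{2\theta}$, whereas the paper bounds each entry's ${\mathcal A}$- and ${\mathcal B}$-norms directly by the corresponding global norms and gets $nD$ (using the same compression fact implicitly); both constants serve equally well.
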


\begin{proof} By \eqref{m2a.def}  and \eqref{m2anorm.def},
 one may verify that
the quasi-norm $\|\cdot\|_{\mathcal{M}_n({\mathcal A})}$ satisfies the first three requirements in Definition
\ref{def-q-banach}  for a $q$-Banach algebra.
Given ${\bf A}:=[a_{ij}]$ and ${\bf B}:=[b_{ij}]$
in $\mathcal{M}_n({\mathcal A})$, one may verify that
\begin{eqnarray} \label{riccati.thm.pf.eq17}
\|{\bf A}{\bf B}\|_{\mathcal{M}_n({\mathcal A})}^q &\le &
\sum_{i,j=1}^n \Big\|\sum_{k=1}^n a_{ik}b_{kj}\Big\|_{\mathcal A}^q\nonumber\\
& \le &  K_{0} ~\|{\bf A}\|_{\mathcal{M}_n({\mathcal A})}^q ~\|{\bf B}\|_{\mathcal{M}_n({\mathcal A})}^q,
\end{eqnarray}
 where $K_{0}$ is the constant in the definition of the $q$-Banach algebra ${\mathcal A}$.
 Therefore $\mathcal{M}_n({\mathcal A})$ equipped with $q$-norm $\|\cdot\|_{\mathcal{M}_n({\mathcal A})}$
 is a $q$-Banach algebra. The $q$-Banach algebra $\mathcal{M}_n({\mathcal A})$ is closed under the complex conjugate operation by
\eqref{wiener.thm.eq2} and \eqref{m2anorm.def}. For every matrix
$${\bf A}:=[a_{ij}] \in \mathcal{M}_n({\mathcal A})$$
and $x=[x_{1},\ldots,x_{n}]$ with $x_{1},\ldots, x_{n}\in \ell^2(\GG)$ and
\[\|x_{1}\|_{\ell^2(\GG)}^2+\ldots+
\|x_{n}\|_{\ell^2(\GG)}^2\le 1,\] we obtain that
\begin{eqnarray*}
 \big\|{\bf A}x  \big\|_{\ell^2(\GG\times \ldots \times \GG)}^2 & = & \sum_{i=1}^n \Big\|\sum_{j=1}^{n}a_{ij}x_{j}\Big\|_{\ell^2(\GG)}^2\\
& \le & \sum_{i=1}^n \bigg(\sum_{j=1}^{n}\|a_{ij}\|_{{\mathcal B}(\ell^2(\GG))} \|x_{j}\|_{\ell^2(\GG)}\bigg)^2\\
& \le & \sum_{i=1}^n \Big( \sum_{j=1}^{n}\|a_{ij}\|_{{\mathcal B}(\ell^2(\GG))}^2 \Big) \le  \|{\bf A}\|_{\mathcal{M}_n({\mathcal A})}^2,
 \end{eqnarray*}
 where the last inequality follows from \eqref{wiener.thm.eq4} and \eqref{m2anorm.def}.
Thus the $q$-Banach algebra $\mathcal{M}_n({\mathcal A})$ is a subalgebra of ${\mathcal B}(\ell^2(\GG\times \ldots \times \GG))$ and continuously embedded w.r.t. it, and
\begin{equation*}
\|{\bf A}\|_{{\mathcal B}(\ell^2(\GG\times \ldots \times\GG))}\le \|{\bf A}\|_{\mathcal{M}_n({\mathcal A})}
\end{equation*}
for all ${\bf A}\in \mathcal{M}_n({\mathcal A})$. The $q$-Banach algebra $\mathcal{M}_n({\mathcal A})$ is a differential subalgebra of ${\mathcal B}(\ell^2(\GG\times \ldots \times \GG))$ because for any ${\bf A}:=[a_{ij}]$ and ${\bf B}:=[b_{ij}]$ in $\mathcal{M}_n({\mathcal A})$, we get that
\begin{eqnarray*}
 \|{\bf A}{\bf B}\|_{\mathcal{M}_n({\mathcal A})}^q & \le &
\sum_{i,j=1}^n \Big\|\sum_{k=1}^n a_{ik}b_{kj}\Big\|_{\mathcal A}^q\\
&\le &  D
\sum_{i,j, k=1}^n \|a_{ik}\|_{\mathcal A}^q \|b_{kj}\|_{\mathcal A}^q \Bigg(
\bigg(\frac{\|a_{ik}\|_{{\mathcal B}(\ell^2(\GG))}} {\|a_{ik}\|_{\mathcal A}}\bigg)^{q\theta}+
\bigg(\frac{\|b_{kj}\|_{{\mathcal B}(\ell^2(\GG))}} {\|b_{kj}\|_{\mathcal A}}\bigg)^{q\theta}\Bigg)\\
& \le & n D~
\|{\bf A}\|_{\mathcal{M}_n({\mathcal A})}^q \|{\bf B}\|_{\mathcal{M}_n({\mathcal A})}^q  \Bigg(
\bigg(\frac{\|{\bf A}\|_{{\mathcal B}(\ell^2(\GG\times \ldots \times \GG))}} {\|{\bf A}\|_{\mathcal{M}_n({\mathcal A})}}\bigg)^{q\theta}+
\bigg(\frac{\|{\bf B}\|_{{\mathcal B}(\ell^2(\GG\times \ldots \times \GG))}} {\|{\bf B}\|_{\mathcal{M}_n({\mathcal A})}}\bigg)^{q\theta}\Bigg).
\end{eqnarray*}
From this, one can conclude that $\mathcal{M}_n({\mathcal A})$ is a proper $q$-Banach subalgebra of  ${\mathcal B}(\ell^2(\GG\times \ldots \times \GG))$.
\end{proof}

\newpage

\end{document}